\newcommand{\bI}{\pmb{I}}
\newcommand{\bK}{\pmb{K}}
\newcommand{\bP}{\pmb{P}}
\newcommand{\bu}{\pmb{u}}
\newcommand{\bv}{\pmb{v}}
\newcommand{\bx}{\pmb{x}}
\newcommand{\bn}{\pmb{n}}
\newcommand{\Div}{\text{div}\;}
\newcommand{\Divtau}{\text{div}_{\tau} \;}
\newcommand{\HicF}{{H_*^{1}}}
\newcommand{\MT}{{M_{\cal O}}}
\newcommand{\ST}{{\Sigma_{\cal O}}}
\newcommand{\iO}{\mathcal{O}}
\newcommand{\iS}{\mathcal{S}}
\newcommand{\iT}{\mathcal{T}}
\newcommand{\mR}{\mathbb{R}}
\newcommand{\R}{\mathbb{R}}
\newcommand{\ds}{\displaystyle}
\newcommand{\Hoo}{{H_{*}^{1,1}}}
\newtheorem{remark}[theorem]{Remark}
\definecolor{darkcyan}{RGB}{0,139,139}
\definecolor{mediumred-violet}{rgb}{0.73, 0.2, 0.52}
\definecolor{americanrose}{rgb}{1.0,0.01,0.4}
\definecolor{shamrockgreen}{rgb}{0.0, 0.62, 0.38}
\title{Space-time Domain Decomposition and Mixed Formulation for reduced fracture models}
\author{Thi-Thao-Phuong Hoang \footnotemark[1]\ \footnotemark[2]\ \footnotemark[4]   
\and  Caroline Japhet\footnotemark[3]\ \footnotemark[1]
\and  Michel~Kern\footnotemark[1]\ 
\and  Jean~E.~Roberts\footnotemark[1]}
\begin{document}
\maketitle

\renewcommand{\thefootnote}{\fnsymbol{footnote}}

\footnotetext[1]{INRIA Paris-Rocquencourt, 78153 Le Chesnay Cedex, France~(\texttt{Phuong.Hoang\_Thi\_Thao@inria.fr},
  \texttt{Michel.Kern@inria.fr}, \texttt{Jean.Roberts@inria.fr})}
\footnotetext[3]{Universit\'e Paris 13, UMR 7539, LAGA,
  99 Avenue J-B Cl\'ement, 93430 Villetaneuse, France (\texttt{japhet@math.univ-paris13.fr}).}
\footnotetext[2]{Current address: Ho Chi Minh City University of Pedagogy, Vietnam.}
\footnotetext[4]{Partially supported by ANDRA, the French agency for nuclear waste management}
\footnotetext[1]{Partially supported by GNR MoMaS.}

\renewcommand{\thefootnote}{\arabic{footnote}}

\begin{abstract}
In this paper we are interested in the "fast path" fracture and we aim to use global-in-time, nonoverlapping domain decomposition methods to model flow and transport problems in a porous medium containing such a fracture. We consider a reduced model in which the fracture is treated as an interface between the two subdomains. 
 Two domain decomposition methods  are considered: one
uses the time-dependent Steklov–Poincar\'e operator and the other uses optimized Schwarz waveform
relaxation (OSWR) based on Ventcell transmission conditions. For each method, a mixed formulation
of an interface problem on the space-time interface is derived, and different time
grids are employed to adapt to different time scales in the subdomains and in the fracture.
Demonstrations of the well-posedness of the Ventcell subdomain problems is given for the mixed formulation.
An analysis for the convergence factor of the OSWR algorithm is given in the case with fractures 
to compute the optimized parameters. Numerical results for two-dimensional
problems with strong heterogeneities are presented to illustrate the performance of the two methods.
\end{abstract}

\begin{keywords} mixed formulations, domain decomposition, reduced fracture model,
  optimized Schwarz waveform relaxation, Ventcell transmission conditions,
time-dependent Steklov–Poincar\'e operator, convergence factor, nonconforming time grids
\end{keywords}

\begin{AMS}
65M55, 65M50, 65M60, 76S05, 35K20
\end{AMS}

\pagestyle{myheadings}
\thispagestyle{plain}
\markboth{T.T.P Hoang, C. Japhet, M. Kern and J.E. Roberts}{Space-time DD for reduced fracture models}

\section{Introduction}
In many simulations of time-dependent physical phenomena, the domain of calculation is a union of domains with different physical properties and in which the lengths of the domains and the time scales may be very different. In particular, this is the case for a domain where there exist fractures and faults. In such a case, the fluid flows rapidly through these paths while it moves much more slowly through the rock matrix. As a result, the contaminants present in the porous medium that travel with the fluid are transported faster than in the case when there is no fracture. Thus the time scales in the fractures and in the surrounding medium are very different, and in the context of simulation, one might want to use much smaller time steps in the fractures than in the rock matrix. For simplicity we consider the case in which the domain is separated into two matrix subdomains by a fracture. The permeability in the fracture can be larger or smaller than that in the surrounding medium. A large permeability fracture corresponds to a fast pathway and
a small permeability fracture corresponds to a geological barrier. Here we are interested in the ``fast path'' fracture.
Modeling flow in porous media with fractures is challenging and requires a multi-scale approach: first, the fractures represent strong heterogeneities as they have much higher or much lower permeability than that in the surrounding medium; second, the fracture width is much smaller than any reasonable parameter of spatial discretization. Thus, to tackle the problem, one might need to refine the mesh locally around the fractures. However, this is well-known to be very computationally costly and is not useful at the macroscopic scale (i.e. when the fractures can be modeled individually). One possible approach is to treat the fractures as domains of co-dimension one, i.e. interfaces between subdomains
(see~\cite{Alboin,Angot09,Helmig99,Faille02,Alessio12,Vincent,Morales10,Morales12,Tunc12} and the references therein) so that one can avoid refining locally around the fractures. We point out that in these reduced fracture models, unlike in some discrete fracture models, interaction between the fractures and the surrounding porous medium is taken into account.

We are concerned with algorithms for modeling flow and transport in porous media containing such fractures.
In particular, in this article we investigate two space-time domain decomposition methods, well-suited to nonmatching time grids. We use mixed finite elements~\cite{brezzi1991mixed,RobertsThomas} as they are mass conservative and they handle well heterogeneous and anisotropic diffusion tensors.

The first method is a global-in-time preconditionned Schur method (GTP-Schur) which uses a Steklov–Poincar\'e-type
operator. For stationary problems, this kind of method
(see~\cite{Mathew:DDM:2008,quarteroni2008numerical,Toselli:DDM:2005})
is known to be efficient for problems with strong heterogeneity. It uses the so-called
balancing domain decomposition (BDD) preconditioner introduced and analyzed in~\cite{Mandel,Mandelweights}, and in~\cite{CowsarBDD} for mixed finite elements. It involves at each
iteration the solution of local problems with Dirichlet and Neumann data
and a coarse grid problem to propagate information globally and to
ensure the consistency of the Neumann subdomain problems.
An extension to the case of unsteady problems with the
construction of the time-dependent Steklov-Poincar\'e operator was introduced in~\cite{PhuongThesis,PhuongSINUM},
where  an interface problem on the space-time interfaces between subdomains is derived.
However, for the time-dependent Neumann-Neumann problems there are no difficulties concerning consistency, 
and we are dealing with only a small number of subdomains, so we consider only a Neumann-Neumann type
preconditioner, an extension to the nonsteady case of the method of~\cite{NNPrecond}.
A Richardson iteration for the primal formulation was independently introduced in~\cite{gander2014dirichlet,Kwok},
  and its convergence was analyzed.
 In the case of elliptic problems with fractures, a local preconditionner
\cite{Laila} significantly improves the convergence of the method.

The second method  is a global-in-time optimized Schwarz method (GTO-Schwarz) and uses the optimized Schwarz
waveform relaxation (OSWR) approach.
The OSWR and GTP-Schur methods are iterative methods that compute in the subdomains
over the whole time interval, exchanging space-time boundary data through transmission conditions
  on the space-time interfaces.
  The OSWR algorithm uses
more general (Robin or Ventcell) transmission operators in which coefficients can be
optimized to improve convergence rates, see~\cite{OSWRwave,JaphetDD9,VMartin}.
The optimization of the Robin (or Ventcell)
parameters was analyzed
in~\cite{Bennequin} and the optimization method was extended to the case of
discontinuous coefficients
in~\cite{PMThesis,BertheDD21,OSWR2d,BlayoHJ,OSWR1d2,PhuongThesis,PhuongSINUM}. Generalizations to heterogeneous problems with nonmatching time grids were introduced
in~\cite{PMThesis,BertheDD21,BlayoHJ,OSWR1d2,Haeberlein,OSWR3sub,OSWRDG,OSWRDG2,PhuongThesis,PhuongSINUM}. More
precisely, in~\cite{BlayoHJ,OSWRDG,OSWRDG2}, a discontinuous Galerkin (DG) method for the time discretization of the
OSWR algorithm was introduced and analyzed for the case of nonconforming time grids.
A suitable time projection between subdomains is defined using an optimal projection algorithm as
in~\cite{Projection2d:3d,Projection1d} with no additional grid.
The classical Schwarz algorithm for stationary problems with mixed finite elements was analyzed
in~\cite{JeanRobinmixed}.
An OSWR method with Robin transmission
conditions for a mixed formulation was proposed and analyzed in~\cite{PhuongThesis,PhuongSINUM}, where
a mixed form of an interface problem on the space-time interfaces between subdomains was derived.
In~\cite{HJKRDD22}, an Optimized Schwarz method with Ventcell conditions
in the context of mixed formulations was proposed.
This method is not obtained in such a straightforward manner as in the case of primal formulations
as Lagrange multipliers have to be introduced on the interfaces to handle tangential derivatives involved in
the Ventcell conditions. 

In this work, we  define both a GTP-Schur and a GTO-Schwarz algorithm for a problem modeling flow of a single phase,
compressible fluid in  a porous medium with a fracture.
A straightforward application of~\cite{PhuongSINUM} would be to consider the fracture as a third subdomain and to take smaller time steps there.  We consider instead however a reduced model in which the fracture is treated as an interface between two subdomains.

The definition of the GTP-Schur method is a straightforward extension of that in~\cite{PhuongSINUM}.
However, to define the GTO-Schwarz method, something more is needed:
a linear combination between the pressure continuity
equation and the fracture problem is used as a transmission condition (which leads naturally to Ventcell conditions),
and a free parameter is used to accelerate the convergence rate.
The well-posedness of the subdomain
problems involved in the first approach was addressed in~\cite{BoffiGastaldi04,PhuongSINUM,Arbogast},
using Galerkin's method and suitable a priori estimates. In this paper,
the proof of well-posedness of both the coupled model and the Ventcell subdomain problems involved in the GTO-Schwarz approach is shown to follow from a more general theorem that covers the two cases.

Note that more general reduced models that can handle both large and small permeability fractures \cite{Vincent} introduce more complicated transmission conditions on the fracture-interface (in the form of Robin type conditions, where
the Robin coefficient has a physical origin), and it is not yet clear how to formulate an associated domain decomposition problem with a parameter that can be optimized. 

\bigskip

This paper is organized as follows: in the remainder of the introduction
(Subsection~\ref{sec:abstractresult}), we state an abstract existence and uniqueness theorem
for evolution problems in mixed form, the proof being deferred to Appendix~\ref{sec:append}.
In Section~\ref{A3Sec:Reduced}, we consider a reduced model with a highly permeable fracture and
prove its well-posedness. Then in Section~\ref{A3Sub:SchurM1} we consider the GTP-Schur approach,
based on physical transmission
conditions, for solving the resulting problem. Different preconditionners for this method are proposed.
In Section~\ref{A3subsec:VentcellDiff} we consider the GTO-Schwarz method,
based on more general (e.g. Ventcell) transmission
conditions, for solving the resulting problem. 
We prove the well-posedness of the subdomain problems
with Ventcell boundary conditions.
In Section~\ref{A4Subsec:TimeNonc} we consider the semi-discrete problems in time using different
time grids in the subdomains. Finally, in Section~\ref{A3Sec:Num}, results of two-dimensional (2D) numerical
experiments comparing the different methods are discussed.

\subsection{Abstract evolution problems in mixed form}
\label{sec:abstractresult}
The goal of this section is to give an existence and uniqueness result
for evolution problems posed in mixed form, in the spirit of the
well-known theorem for weak parabolic problems (see for
example~\cite[vol. 5]{lions2000mathematical}).

We consider two Hilbert spaces $\Sigma$, and $M$ ($M$ will be identified
with its dual), and assume we have continuous bilinear forms
\begin{equation*}
    a :  \Sigma \times \Sigma   \longrightarrow  \R, \qquad
    b :   \Sigma \times M   \longrightarrow  \R, \qquad
    c :  M \times M  \longrightarrow \R 
\end{equation*}
and a continuous linear form
\begin{equation*}
  L: M \longrightarrow \R.
\end{equation*}

We study here an abstract version of a parabolic problem in mixed
form: 
\begin{eqnarray}
\mbox{Find  $ p \in H^1(0,T;M) $ and $ \bu  \in L^2(0,T;\Sigma) $ such that,}
   \nonumber\\
\begin{array}{rlll}
a(\bu, \bv) - b(\bv, p) &=&0, & \forall \bv \in \Sigma,  \hspace{2cm}\\
(\partial_{t} p, \mu)_M + c(p, \mu) + b (\bu, \mu) & = & L (\mu), & \forall \mu \in M,\\
p(\cdot, 0) &= & p_0,&
\end{array}
\label{App31dWeak}
\end{eqnarray}
for some $p_{0} \in M$.

\medskip
We make the following hypotheses on the data:
\begin{itemize}
\item
The bilinear form~$a$ is positive definite on $\Sigma$:
  \begin{equation}
    \tag{H1} \label{eq:H2}
a(\bu, \bu) > 0 \qquad \forall \bu \in \Sigma, \ \ \bu \neq 0,
  \end{equation}
so that $a$ defines a norm on $\Sigma$, and we denote by $ \Sigma_a$ the space $ \Sigma$ with the norm induced by the bilinear form $a$.
Note however that this norm will
not necessarily be equivalent to the initial norm on $\Sigma$.
\item 
The bilinear form~$c$ is positive semidefinite on $M$
  \begin{equation}
    \tag{H2} \label{eq:H3}
    c(p, p) \ge 0 \qquad \forall p \in M.
  \end{equation}
\item 
The bilinear forms~$a$ and~$b$ satisfy the following compatibility
condition: there exists $\beta >0$ such that
\begin{equation}
  \tag{H3} \label{eq:H4}
  \forall \bu \in \Sigma, \quad \sup_{\mu \in M} \dfrac{b(\bu,
    \mu)^2}{\| \mu\|_M^2} + \| \bu \|_{\Sigma_a}^2 \geq \beta \| \bu \|_\Sigma^2.
\end{equation}
\item 
There exists a subspace $W \subset M$ (with continuous embedding) on which 
the bilinear form~$b$ satisfies the stronger
continuity property : there exists $C_b>0$ such that
\begin{equation}
  \tag{H4} \label{eq:H5}
 b(\bu, \mu) \leq C_b \| \bu \|_{\Sigma_a} \| \mu \|_W, \quad \forall \bu \in
 \Sigma \text{ and } \forall \mu \in W.
\end{equation}
\end{itemize}

In most cases, the application of hypothesis~\eqref{eq:H4} will appear
in a more natural form if it is written using the operator $B: W \to
M$ associated with the bilinear form~$b$, that is such that
\begin{equation*}
  \forall \bu \in W, \, \mu \in M, \quad b(\bu, \mu) = (B\bu, \mu)_M.
\end{equation*}
Then, hypothesis~\eqref{eq:H4} can be written in the equivalent form: there exists $\beta >0$ such that
  \begin{equation}
    \tag{H3'} \label{eq:H4p}
 \forall \bu \in \Sigma, \quad \|B \bu \|^2_M +
  \| \bu \|_{\Sigma_a}^2 \geq \beta \| \bu \|_\Sigma^2,
  \end{equation}
see the following remark.
\begin{remark}
\label{rem:hyph4}
  Hypothesis~\eqref{eq:H4} is not equivalent to the inf-sup
  condition. The inf-sup condition expresses the surjectivity of
  $B^T$, whereas here we need the ellipticity of $B$ with respect to the norm
defined by $a$. This also implies a form of compatibility between $a$
and $B$.
  This implies that a is elliptic on the kernel of $B$, i.e. if $B\bu = 0$,
    $a(\bu,\bu) \ge \beta \|\bu\|_{\Sigma}^2$.
\end{remark}

\medskip
The basic existence and uniqueness result for
problem~\eqref{App31dWeak} is the following:

\begin{theorem}
\label{thm:abst}
Let $M$ and $\Sigma$ be Hilbert spaces, and let a, b and c be continuous, bilinear forms satisfying
\eqref{eq:H2} through \eqref{eq:H4}. Then, if $L$ is a continuous linear form on $M$ and $p_0 \in W$,
where $W \subset M$ satisfies \eqref{eq:H5}, 
then problem~\eqref{App31dWeak}
has a unique solution, for which the following estimate holds: 
\begin{equation}
  \label{eq:absestim}
    \| \bu \|_{L^2(0,T; \Sigma)} + \|p\|_ {L^\infty(0, T; M)} +
      \| \partial_t p \|_{L^2(0, T; M)}  \leq C \left( \| L \|^2_{L^2(0,T; M)²} +
      \|p_0\|_W^2 \right).
\end{equation}
  
\end{theorem}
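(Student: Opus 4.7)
The plan is to proceed by Galerkin approximation, paralleling the classical existence theory for weak parabolic problems~\cite[vol.~5]{lions2000mathematical}, adapted to accommodate the mixed structure and the fact that (H1) does not force the $a$-norm to be equivalent to the ambient norm on $\Sigma$. First, I would fix increasing sequences $\Sigma_n \subset \Sigma$ and $M_n \subset M$ of finite-dimensional subspaces with dense unions, and approximate $p_0 \in W$ by $p_{0,n} \in M_n$. On each $\Sigma_n$, (H1) promotes to coercivity (positive-definiteness and coercivity coinciding in finite dimension), so the first equation uniquely determines $\bu_n(t)$ from $p_n(t)$ through a bounded linear map; substitution into the second equation yields a linear ODE system for $p_n$, which is solvable globally on $[0,T]$ once the energy estimates below are in place.

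The heart of the proof is a cascade of a priori estimates. Testing with $(\bu_n, p_n)$ and using (H2) gives, after time integration and Gronwall, uniform bounds on $\bu_n$ in $L^2(0,T;\Sigma_a)$ and $p_n$ in $L^\infty(0,T;M)$. The crucial step is the $L^2(0,T;M)$ bound on $\partial_t p_n$: I would differentiate the first equation in time, test by $\bv = \bu_n$, and combine with the second equation tested by $\mu = \partial_t p_n$. Using the symmetry of $a$ and $c$ (which holds in the intended applications, since the permeability tensor is symmetric), the coupling term becomes
\[
b(\bu_n, \partial_t p_n) \;=\; a(\partial_t \bu_n, \bu_n) \;=\; \tfrac{1}{2}\tfrac{d}{dt}\|\bu_n\|_{\Sigma_a}^2,
\]
leaving $\|\partial_t p_n\|_M^2 + \tfrac{1}{2}\tfrac{d}{dt}\|\bu_n\|_{\Sigma_a}^2 + \tfrac{1}{2}\tfrac{d}{dt}c(p_n,p_n) = L(\partial_t p_n)$. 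Young's inequality and integration in time close the bound, provided $\|\bu_n(0)\|_{\Sigma_a}$ is controlled. This is precisely where hypothesis (H4) and the assumption $p_0 \in W$ enter: the first equation at $t=0$ combined with (H4) yields $\|\bu_n(0)\|_{\Sigma_a}^2 = b(\bu_n(0), p_{0,n}) \leq C_b \|\bu_n(0)\|_{\Sigma_a} \|p_{0,n}\|_W$, whence $\|\bu_n(0)\|_{\Sigma_a} \lesssim \|p_0\|_W$. Finally, to promote the $\Sigma_a$ bound into a bound in the full norm of $\Sigma$, I would invoke (H3$'$): since the second equation gives $b(\bu_n,\mu) = L(\mu) - (\partial_t p_n,\mu)_M - c(p_n,\mu)$ for every $\mu \in M$, the quantity $\|B\bu_n\|_M$ is bounded by $\|L\|_M + \|\partial_t p_n\|_M + \mathrm{const}\cdot\|p_n\|_M$, all of which are already under control.

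With these uniform bounds, weak and weak-$*$ compactness yield a subsequence converging to a limit $(\bu,p)$; passage to the limit in the weak formulation is routine because all forms are linear and continuous, and the initial condition is recovered from the embedding $H^1(0,T;M) \hookrightarrow C([0,T];M)$. Uniqueness follows from linearity: two solutions with identical data satisfy the homogeneous problem, and testing it with itself together with (H1) and (H2) forces both components to vanish.

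The main obstacle will be the $\partial_t p$ estimate, which hinges on converting $a(\partial_t \bu_n, \bu_n)$ into a total time derivative. For symmetric $a$ this is immediate; in the general case one must decompose $a$ into its symmetric and antisymmetric parts and absorb the antisymmetric contribution, which is less straightforward and may demand additional structure. The secondary delicate point is the interplay of (H4) with $p_0 \in W$: without either, the bound on $\bu_n(0)$ in $\Sigma_a$ is lost and with it the entire higher-regularity strand, which is precisely why the theorem isolates $W$ as a strictly smaller subspace of~$M$.
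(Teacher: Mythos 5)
Your proposal is correct and follows essentially the same route as the paper: Galerkin approximation combined with the identical cascade of a priori estimates --- the energy estimate by testing with $(\bu, p)$ plus Gronwall, the $\partial_t p$ estimate obtained by differentiating the first equation in time and testing with $\bu$ and $\partial_t p$, with $\|\bu(0)\|_{\Sigma_a}$ controlled through (H4) and $p_0 \in W$, the bound on $\|B\bu\|_M$ extracted from the second equation, and finally (H3$'$) to convert the $\Sigma_a$ and $B$ bounds into the full $\Sigma$-norm estimate. Your caveat about needing the symmetry of $a$ to write $a(\partial_t\bu,\bu)=\tfrac12\tfrac{d}{dt}\|\bu\|^2_{\Sigma_a}$ is well taken (the paper assumes this implicitly when it declares that $a$ induces a norm on $\Sigma$), and you flesh out the Galerkin setup, compactness, and uniqueness steps that the paper compresses into a single closing sentence.
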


The proof of the theorem will be given in Appendix~\ref{sec:append}.

\begin{remark}
  The case $c=0$ \emph{is} allowed, and is actually the most common
  case (cf Theorem~\ref{A3thrm}).
\end{remark}

\begin{remark}
This result is a generalisation to the abstract setting of Lemma~(3.1)
in~\cite{Arbogast}. This problem has also been considered by Boffi and
Gastaldi~\cite{BoffiGastaldi04}, but the estimates given there
(without proof) are different: they
dispense with the regularity requirement $p_0 \in W$,
at the expense of introducing weighted estimates in time to cope with
the possibility of a singularity at the initial time.
A proof in a more general setting is given in~\cite{feecparabolic}. However this proof uses semigroup theory
  (see Theorem~4.1 of~\cite{feecparabolic}), while the one we propose in this paper is with a priori estimates, in
  the same spirit as in~\cite{Arbogast}.
\end{remark}

We give a simple application of Theorem~\ref{thm:abst} (other
 applications will be given in Theorems~\ref{A3thrm}
and~\ref{A3thrmVentcell} below).

We consider the heat equation with Dirichlet boundary conditions in mixed form. For a domain $\Omega
\subset \R^d$ ($d=2 \text{ or }3$) and $T>0$, we look for $p: \Omega
\times [0, T] \longrightarrow \R$, solution of:
\begin{equation}
\label{eq:heat}
  \begin{aligned}
    \dfrac{\partial p}{\partial t} - \Delta p &= f  & \text{ in }& \Omega
    \times [0, T] \\
    p &= 0 & \text{ on }& \partial \Omega \times [0, T] \\
    p(x, 0) &= p_0(x) & \text{ in }& \Omega.
  \end{aligned}
\end{equation}

To obtain the mixed form of~\eqref{eq:heat},  we define the
spaces $\Sigma = H(\Div, \Omega)$ and $M=L^2(\Omega)$, the bilinear
forms $a, \ b$ (here we will take $c=0$) and the linear form $L$
\begin{align}
  \label{eq:heatmix}
  a: \Sigma \times \Sigma \longrightarrow \R, & \quad a(\bu, \bv) = \int_\Omega \bu
  \cdot \bv \\
b: \Sigma \times M \longrightarrow \R, & \quad b(\bu, \mu)= \int_\Omega
\mu\,  \Div \bu,\\
  L:  M \rightarrow \R, &\quad L(\mu) = \int_\Omega f \mu.
\end{align}

To apply Theorem~\ref{thm:abst}, we check hypothesis~\eqref{eq:H2}
to~\eqref{eq:H5} above. This is trivial
for~\eqref{eq:H2} and~\eqref{eq:H3}. To
check~\eqref{eq:H4}, we use the equivalent
form~\eqref{eq:H4p}. Operator~$B$ is simply the divergence, so that 
\begin{equation*}
  \|B \bu \|^2_M +  \| \bu \|_{\Sigma_a}^2 = \int_\Omega \| \Div \bu
  \|^2 + \int_\Omega \| \bu \|^2 = \| \bu \|^2_\Sigma,
\end{equation*}
and~\eqref{eq:H4} is valid with $\beta=1$. 
Last we check that~\eqref{eq:H5} is also valid with
$W=H_0^1(\Omega)$. Using Green's formula, we obtain
\begin{equation*}
  b(\bu, \mu) = \int_\Omega \mu\, \Div \bu = - \int_\Omega \bu \cdot
  \nabla \mu, 
\end{equation*}
from which~\eqref{eq:H5} follows.

\section{A reduced fracture model} \label{A3Sec:Reduced}
For a bounded domain $ \Omega $ of $ \R^{d} \; (d=2,3) $ with Lipschitz boundary $ \partial \Omega $ and some fixed time $ T > 0 $, we consider the problem of flow of a single phase, compressible fluid
written in mixed form as follows:
\begin{equation} \label{A3primal}
\begin{array}{cll} s\partial_{t} p + \Div \bu &=q  & \text{in} \; \Omega \times (0,T), \\
 \bu  & = -\bK  \nabla p & \text{in} \; \Omega \times (0,T), \\
p & = 0 & \text{on} \; \partial \Omega \times (0,T),\\
p(\cdot, 0) & = p_{0} & \text{in} \; \Omega,
\end{array}\end{equation}
where $ p $ is the pressure, $ \bu $ the velocity, $ q $ the source term, $ s $ the storage coefficient and $ \bK $ a symmetric, time independent, hydraulic, conductivity tensor (see e.g. \cite{PhuongThesis}). 
For simplicity we have imposed a homogeneous Dirichlet condition on the boundary. 

We suppose that the fracture $ \Omega_{f} $ is a subdomain of $ \Omega $ , of thickness $ \delta $, that separates $ \Omega $ into two connected subdomains (see Figure~\ref{Fig:A3TwoModels}, left where for visualization purposes the size of $\delta $ is depicted as being relatively much larger than it is in reality),
$$ \Omega \setminus \overline{\Omega}_{f} = \Omega_{1} \cup \Omega_{2}, \quad \Omega_{1} \cap \Omega_{2} = \emptyset.
$$ 
Also, for simplicity, we assume that $ \Omega_{f} $ consists of the intersection with $ \Omega $ of a line or plane $ \gamma $ (depending on whether $ d=2 $ or $ 3 $), together with the points $ \bx = \bx_{\gamma} + s \bn $ where $ \bx_{\gamma} \in \gamma $, $ s\in \left (-\frac{\delta }{2}, \frac{\delta }{2}\right ) $ and $ \bn $ is a unit vector normal to $ \gamma $. 
We denote by $ \gamma_{i} $ the part of the boundary of $ \Omega_{i} $ shared with the boundary of the fracture~$ \Omega_{f} $:
$$ \gamma_{i} = \left (\partial \Omega_{i} \cap \partial \Omega_{f} \right) \cap \Omega, \quad i=1,2,
$$
and we denote by $ \bn_{i} $ the unit, outward pointing, normal vector field on $ \partial \Omega_{i} $.
\begin{figure}[htbp]
\vspace{-0.8cm}
\begin{minipage}[c]{0.4 \linewidth}
\setlength{\unitlength}{1pt} 
\begin{picture}(140,140)(0,0)
\thicklines
\put(0,0){\includegraphics[scale=0.55]{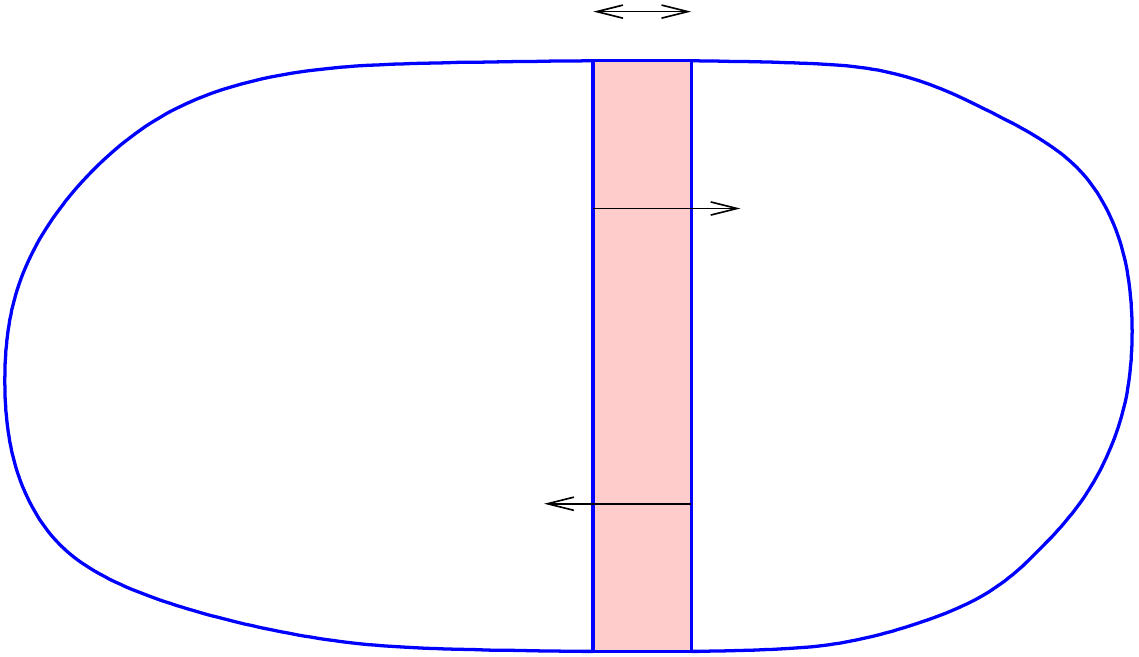} \\}
\put(40,40){$ \Omega_{1} $}
\put(135,40){$ \Omega_{2} $}
\put(120,77){$ \bn_{1}$}
\put(75,15){$ \bn_{2} $}
\put(80,56){$ \gamma_{1} $}
\put(115,56){$ \gamma_{2} $}
\put(97,40){$ \Omega_{f} $}
\put(100,107){$ \delta $}
\end{picture}
\end{minipage} \hspace{2cm}
\begin{minipage}[c]{0.4 \linewidth}
\setlength{\unitlength}{1pt} 
\begin{picture}(140,140)(0,0)
\thicklines
\put(0,0){\includegraphics[scale=0.55]{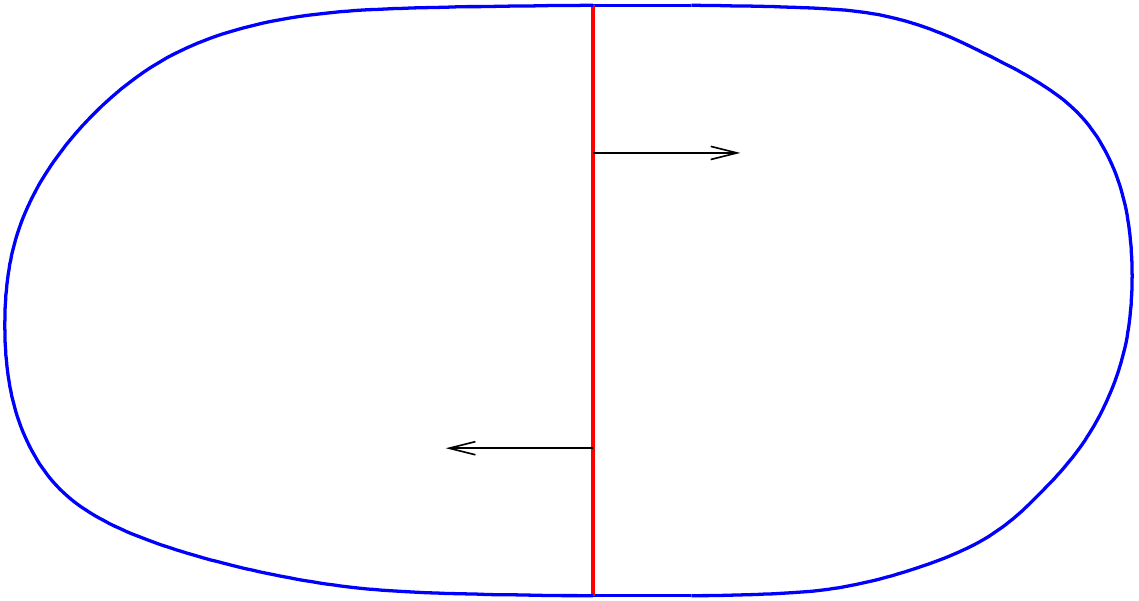} \\}
\put(40,40){$ \Omega_{1} $}
\put(135,40){$ \Omega_{2} $}
\put(115,77){$ \bn_{1}$}
\put(75,15){$ \bn_{2} $}
\put(85,50){$ \gamma$}
\end{picture}
\end{minipage}
\caption{Left: The domain $ \Omega $ with the fracture $ \Omega_{f} $. Right: The domain $ \Omega $ with the interface-fracture $ \gamma $.} 	
\label{Fig:A3TwoModels} 
\end{figure}
We use the convention that for any scalar, vector or tensor valued function $\phi$ defined on $\Omega$, $\phi_i$ denotes
the restriction of $ \phi$ to $ \Omega_{i}, i=1,2,f $. We rewrite problem \eqref{A3primal} as the following transmission problem:
\begin{equation} \label{A32dprob}
\begin{array}{clll} s_{i}\partial_{t} p_{i} + \Div \bu_{i} &=q_{i} & \text{in} \; \Omega_{i} \times (0,T), & i=1,2,f, \\
\bu_{i} & = -\bK_{i}  \nabla p_{i} & \text{in} \; \Omega_{i} \times (0,T), & i=1,2,f,\\
p_{i} & = 0 & \text{on} \; \left (\partial \Omega_{i} \cap \partial \Omega \right) \times (0,T),& i=1,2,f,\\
p_{i}&=p_{f} & \text{on} \; \gamma_{i} \times (0,T), & i=1,2,\\
\bu_{i} \cdot \bn_{i} &= \bu_{f} \cdot \bn_{i} & \text{on} \; \gamma_{i} \times (0,T), & i=1,2,\\
p_{i}(\cdot, 0) & = p_{0,i} & \text{in} \; \Omega_{i}, & i=1,2,f.
\end{array}\end{equation}

%
%
%
In the reduced fracture model, the fracture $\Omega_f$ is treated as a simple interface $ \gamma $ between subdomains $ \Omega_{1} $ and $ \Omega_{2} $ (see Figure~\ref{Fig:A3TwoModels}, right).
We use the notation $ \nabla_{\tau}$ (respectively $ \Divtau $) for the tangential gradient (respectively tangential divergence) operators along the fracture $ \gamma $. We denote by $ s_{\gamma} $ and $ \bK_{\gamma} $ the storage coefficient and the permeability tensor in the $(d-1)$-dimensional fracture $\gamma$.
  The reduced model that we consider was derived in \cite{Alboin, Vincent}. It may be obtained 
  by averaging across the transversal cross sections of the $d$-dimensional fracture $\Omega_f$. 
It consists of equations in the subdomains,
\begin{equation} \label{A31dprobSub}
\begin{array}{cll} s_{i}\partial_{t} p_{i} + \Div \bu_{i} &=q_{i} & \text{in} \; \Omega_{i} \times (0,T), \\
 \bu_{i} & = -\bK_{i}  \nabla p_{i} & \text{in} \; \Omega_{i} \times (0,T), \\
p_{i} & = 0 & \text{on} \; \left (\partial \Omega_{i} \cap \partial \Omega \right) \times (0,T),\\
p_{i}&=p_{\gamma} & \text{on} \; \gamma \times (0,T),\\
p_{i}(\cdot, 0) & = p_{0,i} & \text{in} \; \Omega_{i}, 
\end{array} \qquad   \text{for} \; i=1,2, \end{equation} 
and equations in the interface fracture
\begin{equation} \label{A31dprobFrac}
\begin{array}{cll} s_{\gamma}\partial_{t} p_{\gamma} + \Divtau  \bu_{\gamma} &= \left (\bu_{1} \cdot \bn_{1\mid \gamma} +  \bu_{2} \cdot \bn_{2 \mid \gamma}\right )  & \text{in} \; \gamma \times (0,T), \vspace{0.1cm}\\
\bu_{\gamma}  & = -\bK_{\gamma} \delta \nabla_{\tau} p_{\gamma} & \text{in} \; \gamma \times (0,T), \\
p_{\gamma} & = 0 & \text{on} \; \partial \gamma \times (0,T),\\
p_{\gamma}(\cdot, 0) & = p_{0,\gamma} & \text{in} \; \gamma.
\end{array}\end{equation} 
These equations are the mass conservation equation and the Darcy equation in the subdomain together with the lower dimensional mass conservation and Darcy equations in the fracture of co-dimension $ 1 $. These two systems are coupled: the fracture sees the subdomain through the source term in the conservation equation in the fracture which represents the difference between the fluid entering the fracture from one subdomain and that exiting through the other subdomain. Each subdomain sees the fracture through the Dirichlet boundary condition imposed on the part of its boundary common with the fracture.
We make the hypothesis of the following compatibility conditions:
$    p_{0,i}=p_{0,\gamma} \mbox{ on } \gamma$,  for $i=1,2$.
For a general mathematical treatment of this type of problem in the stationary case
  see~\cite{Sanchez-Palencia}.

To prove the well-posedness of
problem~\eqref{A31dprobSub}-\eqref{A31dprobFrac}, we shall use the
abstract framework of Subsection~\ref{sec:abstractresult} and apply Theorem~\ref{thm:abst}.  We
first write the weak formulation for
problem~\eqref{A31dprobSub}--~\eqref{A31dprobFrac}, and define the
appropriate function spaces, and the forms on these spaces. We use the
convention that if $V$ is a space of functions, then 
$\pmb{V}$ is a space of vector functions having each component in
$V$. For an arbitrary domain~$ \mathcal{O} $, we denote by $ (\cdot,
\cdot )_{\mathcal{O}} $ the inner product in $ L^{2}(\mathcal{O})$ or
$ \pmb{L^{2}}(\mathcal{O}) $ and by and $ \| \cdot \|_{\mathcal{O}}$
the $ L^{2}(\mathcal{O})$-norm or $\pmb{L^{2}}(\mathcal{O})$-norm. To write the weak formulation of
\eqref{A31dprobSub}-\eqref{A31dprobFrac}, we define the following
Hilbert spaces: 
\begin{align*}
 M&=\left \{ \mu =(\mu_{1}, \mu_{2}, \mu_{\gamma}) \in L^{2}(\Omega_{1}) \times L^{2}(\Omega_{2}) \times L^{2}(\gamma)\right \},\\
\Sigma & =\big \{ \bv = (\bv_{1}, \bv_{2}, \bv_{\gamma}) \in \pmb{L^{2}}(\Omega_{1}) \times \pmb{L^{2}}(\Omega_{2}) \times \pmb{L^{2}}(\gamma): \; \Div \bv_{i} \in L^{2}(\Omega_{i}), i=1,2, \\
& \hspace{3cm} \text{and} \; \Divtau \bv_{\gamma} -\sum_{i=1}^{2} \bv_{i} \cdot \bn_{i \mid \gamma} \in L^{2}(\gamma) \big \}, \vspace{-0.3cm}
\end{align*}
equipped with the norms
\begin{align*}
\| \mu \|^{2}_{M} &= \sum_{i=1}^{2} \| \mu_{i}\|^{2}_{\Omega_{i}} + \| \mu_{\gamma}\|^{2}_{\gamma}, \\
\| \bv \|^{2}_{\Sigma} & = \sum_{i=1}^{2} \left (\| \bv_{i}\|_{\Omega_{i}}^{2} + \| \Div \bv_{i} \|^{2}_{\Omega_{i}} \right ) + \| \bv_{\gamma}\|_{\gamma}^{2}  + \|\Divtau \bv_{\gamma} -\sum_{i=1}^{2} \bv_{i} \cdot \bn_{i\mid \gamma} \|^{2}_{\gamma}.
\end{align*}
We define the following bilinear forms
\begin{equation*}
\begin{array}{lccl}
	a:& \Sigma \times \Sigma &
          \longrightarrow & \R \\[.1cm]
	& (\bu, \bv)& \mapsto & a(\bu, \bv) =\ds \sum_{i=1}^{2} \left (\bK^{-1}_{i} \bu_{i}, \bv_{i}\right )_{\Omega_{i}} + \left ((\bK_{\gamma} \delta)^{-1} \bu_{\gamma}, \bv_{\gamma}\right )_{\gamma},\\
	b:& \Sigma \times M &
          \longrightarrow & \R \\[.1cm]
	& (\bu, \mu)& \mapsto & b(\bu, \mu) = \ds\sum_{i=1}^{2} \left (\Div \bu_{i}, \mu_{i}\right )_{\Omega_{i}} + \left (\Divtau \bu_{\gamma}- \sum_{i=1}^{2} \bu_{i} \cdot \bn_{i\mid \gamma} , \mu_{\gamma}\right )_{\gamma},\\
	c_{s}:& M \times M &
          \longrightarrow & \R \\[.1cm]
	& (\eta,\mu)& \mapsto & c_{s}(\eta, \mu) =\ds \sum_{i=1}^{2} \left (s_{i}\eta_{i}, \mu_{i}\right )_{\Omega_{i}} + \left (s_{\gamma}\eta_{\gamma}, \mu_{\gamma}\right )_{\gamma},\\
\end{array}
\end{equation*}
and the linear form
\begin{equation*}
	\begin{array}{lccl}
	L_{q}:& M &
          \longrightarrow & \R \\[.1cm]
	& \mu & \mapsto & L_{q}(\mu)= \ds\sum_{i=1}^{2} \left (q_{i}, \mu_{i}\right )_{\Omega_{i}}.
	\end{array}
\end{equation*}
With these spaces and forms, the weak form of \eqref{A31dprobSub}-\eqref{A31dprobFrac} can be written as follows:
\begin{eqnarray}
 \mbox{Find  $ p \in H^1(0,T;M) $ and $ \bu  \in L^2(0,T;\Sigma) $ such that,} 
   \nonumber\\
   \begin{array}{rll}
a(\bu, \bv) - b(\bv, p) &=0, & \forall \bv \in \Sigma,\\
c_{s}(\partial_{t} p, \mu) + b (\bu, \mu) & = L_{q} (\mu), & \forall \mu \in M,\\
\end{array}  \hspace{2cm}\label{A31dWeak} \\
\mbox{together with the initial conditions}\hspace{4cm} \nonumber\\
    \begin{array}{rll} p_{i}(\cdot,0) & = p_{0,i} & \text{in} \; \Omega_{i}, \quad i=1,2,\\
  p_{\gamma}(\cdot, 0) & = p_{0,\gamma} & \text{in} \; \gamma,
  \end{array} \hspace{2cm}\label{A31dWeakIC}
\end{eqnarray}
for $p_{0,i} \in L^2(\Omega_i), \  i=1,2$ and $p_{0,\gamma} \in L^2(\gamma)$.
We also define the space 
\begin{eqnarray*}
\HicF :=  \{ \mu =(\mu_{1}, \mu_{2}, \mu_{\gamma}) \in H^{1}(\Omega_{1}) \times H^{1}(\Omega_{2}) \times H^{1}_{0}(\gamma): \mu_{i} = 0 \; \text{on} \; \partial \Omega_{i} \cap \partial \Omega  ,\\
\text{and} \; \mu_i=\mu_{\gamma} \; \text{on} \; \gamma,
\ \; i=1,2 \},
\end{eqnarray*}
equipped with the norm
$$ \| \mu \|_{\HicF} ^{2}= \|\mu\|_{M}^{2} + \sum_{i=1}^{2} \| \nabla \mu_{i}\|_{\Omega_{i}}^{2} + \| \nabla_{\tau} \mu_{\gamma} \|_{\gamma}^{2}.
$$
The well-posedness of problem~\eqref{A31dWeak}-\eqref{A31dWeakIC} is given by the following theorem:
\begin{theorem}\label{A3thrm} 
Assume that there exist four positive constants $ s_{-} $ and $ s_{+} $, $ K_{-} $ and $ K_{+} $ such that \\
\indent \hspace{-4.5mm} $ \bullet$ $ s_{-} \leq s_{i}(x) \leq s_{+} $ for a.e. $x \in \Omega_{i}$, $ i=1,2, $\\
\indent \hspace{-4.5mm} $\bullet$ $ s_{-} \leq s_{\gamma}(x) \leq s_{+} $ for a.e. $x \in \gamma$,\\
\indent \hspace{-4.5mm} $\bullet$ $\varsigma^{T} \bK^{-1}_{i}(x) \varsigma \geq K_{-} \vert \varsigma \vert ^{2}$, 
and $|\bK_{i}(x)\varsigma| \leq K_{+} |\varsigma|$, for a.e. $x \in \Omega_{i}$, $\forall \varsigma \in \R^{d}$, $i=1,2$,\\
\indent \hspace{-3.3mm}$\bullet$ $\eta^{T} (\bK_{\gamma} (x)\delta)^{-1} \eta \geq K_{-} \vert \eta \vert ^{2}$ and $ |(\bK_{\gamma} (x)\delta)^{-1} \eta| \leq K_{+} | \eta |$ for a.e. $x \in \gamma$, $\forall \eta \in \R^{d-1}$.
 If $ q $ is in $ L^{2} (0,T; M) $ and $ p_0=(p_{0,1},p_{0,2},p_{0,\gamma}) $ in $\HicF $
then problem~\eqref{A31dWeak}-\eqref{A31dWeakIC} has a unique solution 
$(p, \bu) \in  \, H^{1}(0,T; M) \times L^{2}(0,T; \Sigma).$
\end{theorem}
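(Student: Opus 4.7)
The plan is to apply the abstract Theorem~\ref{thm:abst} to the bilinear forms $a$, $b$ and the linear form $L_q$ defined above, taking the role of the ``$c$'' in the abstract theorem to be zero. The only subtlety is that the time derivative in \eqref{A31dWeak} is paired with $c_s(\cdot,\cdot)$ rather than $(\cdot,\cdot)_M$. This is handled by re-equipping $M$ with the equivalent inner product $c_s$: because $s_- \le s_i, s_\gamma \le s_+$, the norm induced by $c_s$ is equivalent to $\|\cdot\|_M$, and identifying $(M, c_s)$ with its dual brings \eqref{A31dWeak} exactly into the form \eqref{App31dWeak} with $c=0$.

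First I would record the continuity of $a$, $b$ and $c_s$, which is immediate from the uniform bounds on $\bK^{-1}_i$, $(\bK_\gamma\delta)^{-1}$ and $s_i, s_\gamma$. Hypothesis \eqref{eq:H2} then follows from the coercivity estimate
\begin{equation*}
a(\bu,\bu) \ge K_-\Bigl(\sum_{i=1}^2 \|\bu_i\|^2_{\Omega_i} + \|\bu_\gamma\|^2_\gamma\Bigr),
\end{equation*}
which also shows that $\Sigma_a$ norm controls the $\pmb{L^2}$ pieces of the $\Sigma$ norm. Hypothesis \eqref{eq:H3} is trivial with $c=0$. For \eqref{eq:H4} I would use the equivalent form \eqref{eq:H4p}: the operator $B$ associated to $b$ sends $\bu$ to $(\Div\bu_1,\Div\bu_2,\Divtau\bu_\gamma-\sum_i\bu_i\cdot\bn_{i\mid\gamma})$, so $\|B\bu\|_M^2$ is exactly the sum of the three divergence-type pieces appearing in $\|\bu\|_\Sigma^2$. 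Combined with the previous coercivity, we obtain \eqref{eq:H4p} with $\beta=\min(1,K_-)$.

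The main step is verifying \eqref{eq:H5} with $W=\HicF$. Given $\bu\in\Sigma$ and $\mu\in\HicF$, integrate by parts in each subdomain and along the fracture:
\begin{equation*}
(\Div\bu_i,\mu_i)_{\Omega_i} = -(\bu_i,\nabla\mu_i)_{\Omega_i} + \langle \bu_i\cdot\bn_i,\mu_\gamma\rangle_{\gamma},
\end{equation*}
using $\mu_i=0$ on $\partial\Omega_i\cap\partial\Omega$ and $\mu_i=\mu_\gamma$ on $\gamma$; and
\begin{equation*}
(\Divtau\bu_\gamma,\mu_\gamma)_\gamma = -(\bu_\gamma,\nabla_\tau\mu_\gamma)_\gamma,
\end{equation*}
since $\mu_\gamma\in H^1_0(\gamma)$. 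Inserting these into $b(\bu,\mu)$, the boundary contributions $\langle \bu_i\cdot\bn_i,\mu_\gamma\rangle_\gamma$ precisely cancel the coupling term $-\sum_i(\bu_i\cdot\bn_{i\mid\gamma},\mu_\gamma)_\gamma$, leaving
\begin{equation*}
b(\bu,\mu) = -\sum_{i=1}^2(\bu_i,\nabla\mu_i)_{\Omega_i} - (\bu_\gamma,\nabla_\tau\mu_\gamma)_\gamma.
\end{equation*}
Cauchy--Schwarz together with the lower bound on $a$ then yields $b(\bu,\mu)\le C_b\|\bu\|_{\Sigma_a}\|\mu\|_{\HicF}$. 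This cancellation of the interface boundary terms is the only non-routine step; it is exactly what makes $\HicF$ the right choice for $W$.

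Finally, $L_q$ is continuous on $M$ because $q\in L^2(0,T;M)$, and by hypothesis the initial data $p_0=(p_{0,1},p_{0,2},p_{0,\gamma})$ lies in $\HicF=W$, so the compatibility conditions $p_{0,i}=p_{0,\gamma}$ on $\gamma$ are automatic. Theorem~\ref{thm:abst} then delivers a unique solution $(p,\bu)\in H^1(0,T;M)\times L^2(0,T;\Sigma)$ satisfying the estimate \eqref{eq:absestim}, which completes the proof.
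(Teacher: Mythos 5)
Your proposal is correct and follows essentially the same route as the paper's proof: re-interpreting $c_s$ as an equivalent inner product on $M$, applying Theorem~\ref{thm:abst} with $c=0$, verifying \eqref{eq:H4} through the operator form \eqref{eq:H4p} with the same divergence-type operator $B$, and establishing \eqref{eq:H5} with $W=\HicF$ via Green's formula and the cancellation of the interface terms using $\mu_i=\mu_\gamma$ on $\gamma$. The only differences are cosmetic (you make the constant $\beta=\min(1,K_-)$ and the role of the equivalent inner product slightly more explicit than the paper does).
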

%

\begin{proof}
First notice that under the assumptions on $s_i$ and $s_\gamma$ stated in
the theorem, $c_s$ defines an inner product on $M \times M$, and that
the associated norm is equivalent to the original norm on $M$. 

We will apply Theorem~\ref{thm:abst}, in the case $c=0$. The bilinear
forms $a$ and $b$ are obviously continuous, and,
with the hypotheses concerning  $\bK_i$ and $\bK_\gamma$,
$a$ is positive definite on $\Sigma$.

We now check hypothesis~\eqref{eq:H4}. This is easiest to do using the
operator form~\eqref{eq:H4p}, where for this problem $B$ is defined by
\begin{equation*}
  \forall \bu \in \Sigma, \; B \bu = \left( \Div \bu_1, \Div \bu_2,
    \Divtau \bu_{\gamma} - \sum_{i=1}^2 \bu_i \cdot \bn_{i \mid \gamma} \right).
\end{equation*}
The result follows from the hypothesis on $K_i$ and $K_\gamma$ and the
definition of the norm on $\Sigma$.

Last, to check hypothesis~\eqref{eq:H5}, we can take $W=H^1_*$, and use
Green's formula to see that for $\bu \in \Sigma$ and $\mu \in W$
\begin{align*}
b(\bu, \mu) &= \sum_{i=1}^{2} \left (\Div \bu_{i}, \mu_i\right
)_{\Omega_{i}} + \left (\Divtau \bu_{\gamma}- \sum_{i=1}^{2}
  \bu_{i} \cdot \bn_{i\mid \gamma} , \mu_{\gamma}\right )_{\gamma}\\ 
& =  \sum_{i=1}^{2} \left( -\left (\bu_{i}, \nabla \mu_{i}\right
)_{\Omega_{i}} + \left( \bu_i \cdot \bn_{i \mid \gamma}, \mu_i
\right)_\gamma \right)
- \left (\bu_{\gamma}, \nabla_{\tau} \mu_{\gamma}\right )_{\gamma} - \sum_{i=1}^{2} 
\left( \bu_{i} \cdot \bn_{i\mid \gamma} , \mu_{\gamma}\right
)_{\gamma} \\
&=   -\sum_{i=1}^{2} \left (\bu_{i}, \nabla \mu_{i}\right
)_{\Omega_{i}} - \left (\bu_{\gamma}, \nabla_{\tau} \mu_{\gamma}\right )_{\gamma} 
\end{align*}
because $\mu_{i \mid \gamma} = \mu_{\gamma}$.
To conclude, we bound the terms of the right hand side:
\begin{gather*}
   \left (\bu_{i}, \nabla \mu_{i}\right)_{\Omega_{i}}  \leq K_+
  \|\bu_i\|_{\Omega_i} \, \| \mu_i\|_{H^1(\Omega_i)}, \\
 \left (\bu_{\gamma}, \nabla_{\tau} \mu_{\gamma}\right )_{\gamma} \leq
 K_+ \| \bu_\gamma \|_\gamma \| \, \mu_\gamma \|_{H^1(\gamma)}, 
\end{gather*}
from which hypothesis~\eqref{eq:H5} easily follows.
\end{proof}
%
%

It is natural to use domain decomposition methods for
obtaining a numerical solution of
problem \eqref{A32dprob} or problem \eqref{A31dprobSub}-\eqref{A31dprobFrac}, especially as these
methods make it possible to take
different time steps in the subdomains and in the fracture. For problem \eqref{A32dprob}, it
would be a straightforward application of the methods introduced in~\cite{PhuongSINUM} while for problem \eqref{A31dprobSub}-\eqref{A31dprobFrac}, we need to derive a different formulation. In the following, we present two global-in-time domain decomposition methods for solving \eqref{A31dprobSub}-\eqref{A31dprobFrac} based on different transmission conditions. A space-time interface problem, which will be solved iteratively, is derived for each approach.
%
%
%
%
\section{Global-in-time preconditioned Schur (GTP-Schur): using the time-dependent Steklov-Poincar\'e operator}
\label{A3Sub:SchurM1} 
The Global-in-time preconditioned Schur (GTP-Schur) method is directly derived from the formulation of problem \eqref{A31dprobSub} - \eqref{A31dprobFrac}.
To obtain the interface problem for this method, we need to introduce some notation.
For a bounded domain $ \iO \in \mR^{d} \; (d=2,3) $ with Lipschitz boundary
$ \partial \iO$
containing an open subset $\gamma \subset \partial\iO$, we define
the space
\begin{align*}
H_{*,\gamma}^{1}(\iO)
   &:=\left \{ \mu \in H^{1}(\iO): \  \mu = 0 \; \; \text{on} \; \left (\partial \iO \setminus \gamma\right ) 
               \right \}.            
\end{align*}

Then we define the following Dirichlet to Neumann operators $ \iS^{\text{DtN}}_{i} $, $ i=1,2: $
\begin{equation*} 
\begin{array}{rl} \iS^{\text{DtN}}_{i}: H^{1}(0,T; H^{1\over 2}_{00}(\gamma)) \times L^{2} (0,T;L^{2}(\Omega_{i})) \times H_{*,\gamma}^{1} (\Omega_i) &\rightarrow L^{2}\left (0,T; (H^{1\over 2}_{00}(\gamma))^\prime\right ) \vspace{0.1cm}\\
\iS^{\text{DtN}}_{i}(\lambda, q, p_{0}) &\mapsto \bu_{i} \cdot \bn_{i \mid \gamma},
\end{array}
\end{equation*}
where $ (p_{i}, \bu_{i}), \; i=1,2, $ is the solution of the problem
\begin{equation} \label{A3subprobM1}
\begin{array}{rll} s_{i}\partial _{t} p_{i}+\Div \bu_{i} & = q & \text{in} \; \Omega_{i} \times (0,T) , \\
\bu_{i} & = -\bK_{i}\nabla p_{i} & \text{in} \; \Omega_{i} \times (0,T),\\
p_{i} &=0 & \text{on} \; \left (\partial \Omega_{i} \cap \partial \Omega \right) \times (0,T), \\
 p_{i} & = \lambda & \text{on} \; \gamma \times (0,T),\\
p_{i}(\cdot,0) & = p_{0} & \text{in} \; \Omega_{i}.
\end{array}
\end{equation}
\begin{remark} 
A straightforward application of Theorem~\ref{thm:abst} shows the
well-posedness of subdomain problem \eqref{A3subprobM1}.
See also~\cite{PhuongThesis,Arbogast} for a direct proof.
\end{remark} 

Problem~\eqref{A31dprobFrac} is reduced to an interface problem with unknowns $ \lambda $ and $u_\gamma$:
\begin{equation} \label{A3finterfaceM1-short}
\begin{array}{rll} 
s_{\gamma}\partial_{t} \lambda +\Divtau \bu_{\gamma} &=\sum_{i=1}^{2}\iS^{\text{DtN}}_{i}(\lambda, q_{i}, p_{0,i}) & \text{in} \; \gamma \times (0,T), \\
\bu_{\gamma}& = -\bK_{\gamma} \delta \nabla_{\tau} \lambda & \text{in} \; \gamma \times (0,T), \\
\lambda & = 0 & \text{on} \; \partial \gamma \times (0,T),\\
\lambda (\cdot, 0) & = p_{0,\gamma} & \text{in} \; \gamma.
\end{array}
\end{equation}
or equivalently
\begin{equation} \label{A3finterfaceM1}
\begin{array}{rll} 
s_{\gamma}\partial_{t} \lambda +\Divtau \bu_{\gamma} -\sum_{i=1}^{2}\iS^{\text{DtN}}_{i}(\lambda, 0, 0) &=\sum_{i=1}^{2}\iS^{\text{DtN}}_{i}(0, q_{i}, p_{0,i})& \text{in} \; \gamma \times (0,T), \\
\bu_{\gamma}& = -\bK_{\gamma} \delta \nabla_{\tau} \lambda & \text{in} \; \gamma \times (0,T), \\
\lambda & = 0 & \text{on} \; \partial \gamma \times (0,T),\\
\lambda (\cdot, 0) & = p_{0,\gamma} & \text{in} \; \gamma,
\end{array}
\end{equation}
or in compact form (space-time),
\begin{equation*} 
\iS \left(\begin{array}{l} 
\lambda\\
\bu_{\gamma}
\end{array}\right)
 = \chi.
\end{equation*}
This problem is solved using an iterative solver such as GMRES since due to the time derivative
the system is nonsymmetric.

To improve the convergence of the iterative algorithm, we will consider two preconditioners. The first,
introduced in~\cite{Laila}, arises from the observation that the interface problem is dominated by the second order operator $ \left (\Divtau (\bK_{\gamma} \delta \nabla_{\tau})\right) $ since the Steklov-Poincar\'e operator is of lower order (first order). This is even more the case when the permeability in the fracture is much larger than that
  in the surrounding domain. Thus one choice for a preconditioner is $ \bP_{\text{loc}}^{-1} $ defined by taking the discrete counterpart of the operator $ \left (\Divtau (\bK_{\gamma} \delta \nabla_{\tau})\right) ^{-1} $.
We have
\begin{equation*} 
\begin{array}{rl} \bP_{\text{loc}}^{-1}:  L^{2}(\gamma) &\rightarrow L^{2}(\gamma) \vspace{0.1cm}\\
 g_{\gamma} &\mapsto \tilde{p}_{\gamma},
\end{array}
\end{equation*}
where $ (\tilde{p}_{\gamma}, \tilde{\bu}_{\gamma}) $ is the solution of the problem
\begin{equation*} 
\begin{array}{rll} 
\Divtau \tilde{\bu}_{\gamma} &=g_{\gamma} & \text{in} \; \gamma, \\
\tilde{\bu}_{\gamma}& = -\bK_{\gamma} \delta \nabla_{\tau} \tilde{p}_{\gamma} & \text{in} \; \gamma, \\
\tilde{p}_{\gamma} & = 0 & \text{on} \; \partial \gamma.
\end{array}
\end{equation*}
This preconditioner was introduced for elliptic problems, and it was shown numerically~\cite{Laila} that
it significantly improves the convergence of the algorithm, especially, as mentionned before, for high
permeability in the fracture.

A second possibility is to use the Neumann-Neumann preconditioner as was done in~\cite{PhuongThesis,PhuongSINUM} for ordinary domain decomposition algorithms (i.e. without fractures). The preconditioned problem is then
$$ \bP_{\text{NN}}^{-1} \varphi = \tilde{\chi}, $$
with
$$ \bP_{\text{NN}}^{-1} := \left (\sigma_{1}(\check{\iS}^{\text{DtN}}_{1})^{-1} +\sigma_{2} (\check{\iS}^{\text{DtN}}_{2})^{-1}\right ),
$$
where
$ \sigma_{i}: \Gamma \times (0,T) \rightarrow [0,1] $ is such that $ \sigma_{1}+\sigma_{2} = 1$. If $ \bK_{i} = \mathfrak{K}_{i} \bI $ and $ \mathfrak{K}_{i} $ is constant in each subdomain then
	$$ \sigma_{i} = \frac{\mathfrak{K}_{i} }{\mathfrak{K}_{1} + \mathfrak{K}_{2}}.
	$$
The operator $ (\check{\iS}^{\text{DtN}}_{i})^{-1}, \; i=1,2,$ is the inverse of the operator
$
\check{\iS}^{\text{DtN}}_{i}:= \iS^{\text{DtN}}_{i}(\cdot, 0, 0),
$
and is defined by
\begin{equation*} 
\begin{array}{rl} (\check{\iS}^{\text{DtN}}_{i})^{-1}:  L^{2}\left (0,T; L^{2}(\gamma)\right ) &\rightarrow H^{1}\left (0,T; L^{2}(\gamma)\right ) \vspace{0.1cm}\\
\left (\check{\iS}^{\text{DtN}}_{i}\right )^{-1} (\varphi) &\mapsto p_{i \mid \gamma},
\end{array}
\end{equation*}
where $ (p_{i}, \bu_{i}), \; i=1,2, $ is the solution of the problem
\begin{equation} \label{A3subprobM1-2}
\begin{array}{rll} s_{i}\partial _{t} p_{i}+\Div \bu_{i} & = 0 & \text{in} \; \Omega_{i} \times (0,T) , \\
\bu_{i} & = -\bK_{i}\nabla p_{i} & \text{in} \; \Omega_{i} \times (0,T),\\
p_{i} &=0 & \text{on} \; \left (\partial \Omega_{i} \cap \partial \Omega \right) \times (0,T), \\
-\bu_{i} \cdot \bn_{i} & = \varphi & \text{on} \; \gamma \times (0,T),\\
p_{i}(\cdot,0) & = 0 & \text{in} \; \Omega_{i}.
\end{array}
\end{equation}
In Section~\ref{A3Sec:Num}, we will carry out numerical experiments and compare the performance of these two preconditioners. 
%
%
%
%
%
\section{Global-in-time optimized Schwarz (GTO-Schwarz): using optimized Schwarz waveform relaxation}
\label{A3subsec:VentcellDiff}
While the extension of the GTP-Schur method to handle the fracture model is straightforward, the extension
of the GTO-Schwarz method to the fracture problem needs something more. Indeed,
instead of imposing Dirichlet boundary conditions on $ \gamma \times (0,T) $ when solving the fracture problem
as was done for the GTP-Schur method, for the GTO-Schwarz approach one uses optimized Robin transmission
conditions. Thus, we introduce new transmission conditions, that combine the equation for continuity of the pressure across the fracture with the flow equations \eqref{A31dprobFrac} in the fracture. These new transmission conditions contain a free parameter, which is used to accelerate the convergence. This is an extension of the OSWR method with optimized Robin parameters studied in~\cite{PhuongThesis,PhuongSINUM} in which Robin-to-Robin transmission conditions are considered in mixed form. Here however, because of the fracture problem, we obtain what we will call Ventcell-to-Robin transmission conditions as described below.
\subsection{Ventcell-to-Robin transmission conditions}
The new transmission conditions are derived by introducing Lagrange multipliers $ p_{i,\gamma}, \; i=1,2, $ with
$p_{i,\gamma}$ representing the trace on the interface $ \gamma $ of the pressure $ p_{i} $ in the subdomain
$\Omega_i$. As the pressure is continuous across the interface, one has
\begin{equation} \label{A4OSpressure}
p_{1,\gamma} = p_{2, \gamma} = p_{\gamma}, \quad \text{on} \; \gamma \times (0,T).
\end{equation}
We then rewrite the Darcy equation in the fracture associated with each $ p_{i, \gamma} $ as 
$$ \bu_{\gamma, i} = -\bK_{\gamma} \delta \nabla_{\tau} p_{i, \gamma}, \quad \text{on} \; \gamma \times (0,T), \; i=1,2.
$$
We have used the notation $ \bu_{\gamma,i}, \; i=1,2, $ instead of $ \bu_{i, \gamma} $ to insist on the fact that $ \bu_{\gamma,i} $ is not the tangential component of a trace of $ \bu_{i} $ on $ \gamma $.
In fact, $\bu_{\gamma,i}, \; i=1,2, $ represents the tangential velocity in the fracture associated with the pressure
$p_i$ so that
$$ \bu_{\gamma,1} = \bu_{\gamma,2} = \bu_{\gamma}, \quad \text{on} \; \gamma \times (0,T), \; i=1,2.
$$
With the notation introduced above, the flow equation \eqref{A31dprobFrac} in the fracture can be rewritten, for $ i=1,2, $ and $ j=(3-i), $ as
\begin{equation} \label{A3OSfractureprob}
\begin{array}{rll} -\bu_{i} \cdot \bn_{i} + s_{\gamma}\partial _{t} p_{i, \gamma} +\Divtau \bu_{\gamma, i} &= -\bu_{j} \cdot \bn_{i}, &\text{on} \; \gamma \times (0,T), \vspace{0.1cm}\\
\bu_{\gamma, i}& =-\bK_{\gamma} \delta \nabla_{\tau} p_{i, \gamma}, &\text{on} \; \gamma \times (0,T),  \vspace{0.1cm} \\
p_{i, \gamma} &= 0 & \text{on} \; \partial \gamma \times (0,T),\\
p_{i, \gamma}(\cdot, 0) &= p_{0,\gamma} & \text{in} \; \gamma.
\end{array} 
\end{equation}
In the context of domain decomposition, \eqref{A4OSpressure} and
\eqref{A3OSfractureprob} are the coupling conditions between the
subdomains. As in the case without a fracture we take a linear
combination of these conditions (for a parameter $\alpha >0$),
but here we obtain equivalent Ventcell-to-Robin transmission conditions
(instead of  Robin-to-Robin): 
\begin{equation} \label{A3TCs1M2}
\begin{array}{rl}  -\bu_{1} \cdot \bn_{1} + \alpha p_{1, \gamma} + s_{\gamma}\partial _{t} p_{1, \gamma} +\Divtau \bu_{\gamma, 1} &= -\bu_{2} \cdot \bn_{1} + \alpha p_{2, \gamma} \vspace{0.1cm}\\
\bu_{\gamma, 1}& =-\bK_{\gamma} \delta \nabla_{\tau} p_{1, \gamma}  \vspace{0.1cm}
\end{array} \qquad \text{on} \; \gamma \times (0,T), \vspace{-0.3cm}
\end{equation}
\begin{equation} \label{A3TCs2M2}
\begin{array}{rl}
-\bu_{2} \cdot \bn_{2} + \alpha p_{2, \gamma} + s_{\gamma}\partial _{t} p_{2, \gamma} +\Divtau \bu_{\gamma, 2} &= -\bu_{1} \cdot \bn_{2} + \alpha p_{1, \gamma}  \vspace{0.1cm}\\
\bu_{\gamma, 2}& =-\bK_{\gamma} \delta \nabla_{\tau} p_{2, \gamma} 
\end{array} \qquad \text{on} \; \gamma \times (0,T),
\end{equation}
Using these transmission conditions together with the boundary and initial conditions 
\begin{equation} \label{A3BCsfM2}
\begin{array}{rlll} p_{1, \gamma} &= p_{2, \gamma} &= 0 & \text{on} \; \partial \gamma \times (0,T),\\
p_{1, \gamma}(\cdot, 0) &= p_{2, \gamma}(\cdot, 0) &= p_{0,\gamma} & \text{in} \; \gamma,
\end{array}
\end{equation}
the subdomain problem is obtained by imposing Ventcell boundary conditions on $ \gamma \times (0,T) $, $ i=1,2, $ $ j=3-i $:
\begin{equation} \label{A3subprobM2}
\begin{array}{rll} s_{i}\partial _{t} p_{i}+\Div \bu_{i} & = q & \text{in} \; \Omega_{i} \times (0,T) , \\
\bu_{i} & = -\bK_{i}\nabla p_{i} & \text{in} \; \Omega_{i} \times (0,T),\\
-\bu_{i} \cdot \bn_{i} + \alpha p_{i, \gamma} +s_{\gamma}\partial _{t} p_{i, \gamma} +\Divtau \bu_{\gamma, i}  & = -\bu_{j} \cdot \bn_{i} + \alpha p_{j, \gamma}  & \text{on} \; \gamma \times (0,T),\\
\bu_{\gamma, i}& = -\bK_{\gamma} \delta \nabla_{\tau} p_{i, \gamma} & \text{in} \; \gamma \times (0,T), \\
p_{i} &=0 & \text{on} \; \left (\partial \Omega_{i} \cap \partial \Omega \right) \times (0,T), \\
p_{i, \gamma} &= 0 & \text{on} \; \partial \gamma \times (0,T),\\
p_{i}(\cdot,0) & = p_{0} & \text{in} \; \Omega_{i}, \\
p_{i, \gamma}(\cdot, 0) & = p_{0,\gamma} & \text{in} \; \gamma,
\end{array}
\end{equation}
where the quantity on the right hand side of the third equation
will be known in the context of an iterative method for solving \eqref{A31dprobSub}-\eqref{A31dprobFrac}.
In the next subsection we prove that problem~\eqref{A3subprobM2} is well-posed. 
%
\subsection{Well-posedness of the subdomain problem with Ventcell boundary conditions}
For a bounded domain $ \iO \subset \mR^{d} \; (d=2,3) $ with Lipschitz boundary
$ \partial \iO$ containing an open subset $\gamma \subset \partial\iO$, consider the following time-dependent problem written in mixed form with Dirichlet and Ventcell boundary conditions
\begin{equation} \label{A3subprobM2-short} 
\begin{array}{rll} s_{\iO}\partial _{t} p_{\iO} +\Div \bu_{\iO} & = q & \text{in} \; \iO \times (0,T) , \\
\bu_{\iO} & = -\bK_{\iO} \nabla p_{\iO} & \text{in} \; \iO \times (0,T),\\
-\bu_{\iO} \cdot \bn + \alpha p_{\gamma} +s_{\gamma}\partial _{t} p_{\gamma} +\Divtau \bu_{\gamma}  & = \theta_\gamma 
& \text{on} \; \partial \gamma \times (0,T),\\
\bu_{\gamma}& = -\bK_{\gamma} \delta \nabla_{\tau} p_{\gamma} & \text{in} \; \gamma \times (0,T), \\
p_{\iO} &=0 & \text{on} \; \left (\partial \iO \setminus \gamma\right ) \times (0,T), \\
p_{\gamma} &= 0 & \text{on} \; \partial \gamma \times (0,T),\\
p_{\iO}(\cdot,0) & = p_{0,{\cal O}} & \text{in} \; \Omega_{i}, \\
p_{\gamma}(\cdot, 0) & = p_{0,\gamma} & \text{in} \; \gamma,
\end{array}
\end{equation}
where $ \theta_\gamma $ is a function defined on $ \gamma \times (0,T) $, and $\alpha \in \R, \ \alpha >0$.
In order to write the weak formulation of~\eqref{A3subprobM2-short}, we need to define the following Hilbert spaces: 
\begin{align*}
\MT &=\left \{ \mu = (\mu_{\iO}, \mu_{\gamma}) \in L^{2}(\iO) \times  L^{2}(\gamma)\right \},\\
\ST & =\left \{ \bv=(\bv_{\iO},\bv_{\gamma}) \in \pmb{L^{2}}(\iO) \times \pmb{L^{2}}(\gamma): \Div \bv_{\iO} \in H(\Div, \iO) \; \text{and} \; \left (\Divtau \bv_{\gamma} - \bv_{\iO} \cdot \bn_{\mid \gamma}\right ) \in L^{2}(\gamma)\right \},
\end{align*}
equipped with the norms
\begin{align*}
\| \mu \|^{2}_{\MT} &=  \| \mu_{\iO}\|^{2}_{\iO} + \| \mu_{\gamma}\|^{2}_{\gamma}, \\
\| \bv \|^{2}_{\ST} & = \| \bv_{\iO} \|_{\iO} + \| \Div \bv_{\iO} \|^{2}_{\iO}  + \| \bv_{\gamma} \|_{\gamma}^{2}+ \|\Divtau \bv_{\gamma}-\bv_{\iO} \cdot \bn_{\mid \gamma}\|_{\gamma}^{2}.
\end{align*}
Then define the bilinear forms
\begin{equation*}
\begin{array}{lcclcrl}
	a_{\cal O}:& \ST \times \ST &
          \longrightarrow & \R, & a_{\cal O}(\bu, \bv) &=& \left (\bK^{-1}_{\iO} \bu_{\iO},
    \bv_{\iO}\right )_{\iO} + \left ((\bK_{\gamma} \delta)^{-1}
    \bu_{\gamma}, \bv_{\gamma}\right )_{\gamma} \\
	b_{\cal O}:& \ST \times \MT &
          \longrightarrow & \R, & b_{\cal O}(\bu, \mu) &=& \left (\Div \bu_{\iO}, \mu_{\iO}\right )_{\iO} + \left (\Divtau \bu_{\gamma}-\bu_{\iO} \cdot \bn_{\mid \gamma }, \mu_{\gamma}\right )_{\gamma},\\
	c_{\cal O}:& \MT \times \MT &
          \longrightarrow & \R, & c_{\cal O}(\eta, \mu) &=& \left (\alpha \eta_{\gamma}, \mu_{\gamma}\right )_{\gamma},\\
	c_{s,{\cal O}}:& \MT \times \MT &
          \longrightarrow & \R,  & c_{s,{\cal O}}(\eta, \mu) &=& \left (s_{\iO}\eta_{\iO}, \mu_{\iO}\right )_{\iO} + \left (s_{\gamma}\eta_{\gamma}, \mu_{\gamma}\right )_{\gamma},\\
\end{array}
\end{equation*}
and the linear form
\begin{equation*}
	\begin{array}{lcclc}
	L_{q,{\cal O}}:& \MT &
          \longrightarrow & \R, & L_{q,{\cal O}}(\mu)=  \left (q, \mu_{\iO}\right )_{\iO} + \left (\theta_\gamma 
          , \mu_{\gamma}\right )_{\gamma}.
	\end{array}
\end{equation*}
With these spaces and forms, the weak form of \eqref{A3subprobM2-short} can be written as follows:
\begin{eqnarray} 
\mbox{For a.e. $ t \in (0,T) $, find $ p(t) \in \MT $ and $ \bu (t) \in \ST $ such that}
   \hspace{4cm}\nonumber\\  
\begin{array}{rll} 
a_{\cal O}(\bu, \bv) - b_{\cal O}(\bv, p) &=0 & \forall \bv \in \ST,\\
c_{s,{\cal O}}(\partial_{t} p, \mu) +c_{\cal O}(p,\mu)+ b_{\cal O}(\bu, \mu) & = L_{q,{\cal O}} (\mu) & \forall \mu \in \MT,\\
\end{array} \hspace{2cm} \label{A3WeaksubprobM2}\\
\mbox{together with the initial conditions} \hspace{4.5cm} \nonumber \\
  \begin{array}{rll} p_{\cal O}(\cdot,0) & = p_{0,{\cal O}} & \text{in} \; \iO,\\
  p_{\gamma}(\cdot, 0) & = p_{0,\gamma} & \text{in} \; \gamma.
  \end{array}\hspace{4cm} \label{A3WeaksubprobM2IC} 
\end{eqnarray}
We will also make use of
\begin{align*}
\Hoo(\iO,\gamma) &:=   H_{*,\gamma}^{1} (\iO)\times H^1_0(\gamma).            
\end{align*}
The well-posedness of problem~\eqref{A3WeaksubprobM2}-\eqref{A3WeaksubprobM2IC} is given by the following theorem:
\begin{theorem}\label{A3thrmVentcell} 
Assume that there exist positive constants $ s_{-} $, $ s_{+} $, $ K_{-} $, $ K_{+} $ with
 \begin{itemize}
\item $ s_{-} \leq s_{\iO}(x) \leq s_{+} $ for a.e. $x \in \iO$,
\item $ s_{-} \leq s_{\gamma}(x) \leq s_{+} $ for a.e. $x \in \gamma$,
\item $\varsigma^{T} \bK^{-1}_{\iO}(x) \varsigma \geq K_{-} \vert \varsigma \vert ^{2}$, 
and $|\bK_{i}(x)\varsigma| \le K_{+} |\varsigma|$, for a.e. $x \in \iO$, $\forall \varsigma \in \R^{d}$, 
\item $\eta^{T} \bK^{-1}_{\gamma}(x) \delta \eta \geq K_{-} \vert \eta \vert ^{2}$ and $ |(\bK_{\gamma} (x)\delta)^{-1} \eta | \leq K_{+} $ for a.e. $x \in \gamma$, $\forall \eta \in \R^{d-1}$.
\end{itemize}
If $ q $ is in $ L^{2} (0,T; \MT) $, $ p_{0}  $ in $\Hoo(\iO,\gamma) $ and $ \theta_\gamma $ in $ L^{2}(0,T; L^{2}(\gamma)) $
then problem \eqref{A3WeaksubprobM2}-\eqref{A3WeaksubprobM2IC} has a unique solution  
$(p, \bu) \in  \, H^{1}(0,T; \MT) \times L^{2}(0,T; \ST).$
\end{theorem}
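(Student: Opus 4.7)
The natural strategy is to recognize this as another instance of the abstract framework of Subsection~\ref{sec:abstractresult} and apply Theorem~\ref{thm:abst}, this time with a nonzero bilinear form $c$. First, observe that under the uniform bounds on $s_\iO$ and $s_\gamma$, the form $c_{s,\iO}$ is an inner product on $\MT$ whose induced norm is equivalent to $\|\cdot\|_\MT$, so~\eqref{A3WeaksubprobM2} is of exactly the form~\eqref{App31dWeak} with $a=a_\iO$, $b=b_\iO$, $c=c_\iO$ and $L=L_{q,\iO}$. The linear form $L_{q,\iO}$ is continuous on $L^2(0,T;\MT)$ by the assumption $\theta_\gamma\in L^2(0,T;L^2(\gamma))$, and we take the subspace $W\subset\MT$ below to contain the initial datum $p_0\in\Hoo(\iO,\gamma)$, which is what is needed for the abstract estimate~\eqref{eq:absestim}.

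Next I would verify the four hypotheses in order. Hypothesis~\eqref{eq:H2} is immediate from the lower bounds on $\bK_\iO^{-1}$ and $(\bK_\gamma\delta)^{-1}$, which make $a_\iO$ coercive on $\ST$. Hypothesis~\eqref{eq:H3} holds because $c_\iO(\mu,\mu)=\alpha\|\mu_\gamma\|_\gamma^2\geq 0$ since $\alpha>0$; note that the case $c=0$ of Theorem~\ref{A3thrm} is recovered when $\alpha=0$, so the present proof is genuinely a refinement. For~\eqref{eq:H4} I would use the equivalent operator form~\eqref{eq:H4p}: here
\[
B\bu=\bigl(\Div\bu_\iO,\ \Divtau\bu_\gamma-\bu_\iO\cdot\bn_{\mid\gamma}\bigr),
\]
so $\|B\bu\|_\MT^2+\|\bu\|_{\Sigma_a}^2$ controls, term by term, each of the four contributions defining $\|\bu\|_\ST^2$ (the two $L^2$ norms via the $\bK$-bounds, the two divergence-type norms directly), and~\eqref{eq:H4p} follows with an explicit $\beta$.

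The only nontrivial check is~\eqref{eq:H5}. I would take $W=\Hoo(\iO,\gamma)$ and apply Green's formula separately in $\iO$ and on $\gamma$. Because $\mu_\iO=0$ on $\partial\iO\setminus\gamma$ and $\mu_\gamma=0$ on $\partial\gamma$, the two integration-by-parts yield
\[
b_\iO(\bu,\mu)=-(\bu_\iO,\nabla\mu_\iO)_\iO-(\bu_\gamma,\nabla_\tau\mu_\gamma)_\gamma+\langle\bu_\iO\cdot\bn,\,\mu_{\iO\mid\gamma}-\mu_\gamma\rangle_\gamma,
\]
and the boundary pairing vanishes precisely because, in the space $\Hoo(\iO,\gamma)$ used for $p_0$ (mirroring the role of $\HicF$ in Theorem~\ref{A3thrm}), the compatibility $\mu_{\iO\mid\gamma}=\mu_\gamma$ on $\gamma$ is imposed; once this term is gone,~\eqref{eq:H5} is just Cauchy--Schwarz together with the upper bounds on $\bK_\iO$ and $\bK_\gamma\delta$.

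The main obstacle is precisely this Green's-formula bookkeeping: the trace $\bu_\iO\cdot\bn$ of an $H(\Div)$ field lies only in the dual of $H^{1/2}_{00}(\gamma)$, so the interface pairing cannot be absorbed into the $\|\cdot\|_{\Sigma_a}$-norm (which involves no divergence). This forces one to work in the subspace encoding $\mu_{\iO\mid\gamma}=\mu_\gamma$; correspondingly, the statement implicitly demands the compatibility $p_{0,\iO\mid\gamma}=p_{0,\gamma}$ on the initial data, which is consistent with the hypothesis $p_0\in\Hoo(\iO,\gamma)$. Once~\eqref{eq:H5} is established, existence, uniqueness and the energy estimate follow directly from Theorem~\ref{thm:abst}.
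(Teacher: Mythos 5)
Your proposal is correct and follows essentially the same route as the paper's proof: apply Theorem~\ref{thm:abst} with $c=c_{\cal O}$, verify~\eqref{eq:H2}, \eqref{eq:H3} and~\eqref{eq:H4p} from the coefficient bounds exactly as you do, and check~\eqref{eq:H5} with $W=\Hoo(\iO,\gamma)$ via the same Green's formula computation. The one point where you are more careful than the paper is worth noting: the paper defines $\Hoo(\iO,\gamma)$ as the plain product $H_{*,\gamma}^{1}(\iO)\times H^1_0(\gamma)$ and ends its verification right after displaying the four Green's-formula terms, whereas you correctly observe that the interface pairing $\left(\bu_{\iO}\cdot\bn_{\mid\gamma},\,\mu_{\iO}-\mu_{\gamma}\right)_{\gamma}$ cancels only if the trace compatibility $\mu_{\iO\mid\gamma}=\mu_{\gamma}$ is built into $W$ --- and that it cannot be absorbed otherwise, since the normal trace of an $H(\Div,\iO)$ field is not controlled by $\|\bu\|_{\Sigma_a}$ --- so your reading supplies the constraint that the paper's stated definition of $\Hoo$ omits but its argument tacitly uses.
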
 

\begin{proof}
 As in the case with Dirichlet boundary conditions, we apply
 Theorem~\ref{thm:abst}. Again notice
 that, under the hypotheses on $s_{\cal{O}}$ and $s_\gamma$, $c_{s,
   \cal{O}}$ defines an inner product on $M$, equivalent to its usual
 inner product. 

$\bullet$ It is clear that $a$, $b$ and $c$ are all continuous forms,
that $a$ is positive definite (due to the hypotheses on
$K_{\cal{O}}$ and $K_{\gamma}$)  and that $c_{\cal{O}}$ is positive (since $\alpha \ge 0$), so that
hypotheses~\eqref{eq:H2} and \eqref{eq:H3} in Theorem~\ref{thm:abst} hold. 

$\bullet$ To verify hypothesis~\eqref{eq:H4}, we define the operator $B$
by
\begin{equation*}
B \bu =  \left (\Div \bu_{\iO}, \Divtau \bu_{\gamma}-\bu_{\iO} \cdot
  \bn_{\mid \gamma } \right),
\end{equation*}
and it follows easily that
\begin{multline*}
  \| B \bu \|_M^2 + a_{\cal O}(\bu, \bu) = \| \Div \bu_{\iO} \|_{\cal
    O}^2 + \|\Divtau \bu_{\gamma}-\bu_{\iO} \cdot
  \bn_{\mid \gamma } \|_\gamma^2  \\
+ \left (\bK^{-1}_{\iO} \bu_{\iO},
    \bu_{\iO}\right )_{\iO} + \left ((\bK_{\gamma} \delta)^{-1}
    \bu_{\gamma}, \bu_{\gamma}\right )_{\gamma} \ge \beta \| \bu \|^2_\Sigma,
\end{multline*}
for some $\beta >0$, again because of the lower bounds on $K_{\cal O}$
and $K_\gamma$. 

$\bullet$ Last, to check~\eqref{eq:H5}, we proceed as in
Theorem~\ref{A3thrm}, and use Green's formula for $\bu \in \Sigma$ and
$\mu \in H^{1,1}_*(\cal{O}, \gamma)$. 
\begin{align*}
b_{\cal O}(\bu, \mu) &= \left (\Div \bu_{\iO}, \mu_{\iO}\right )_{\iO}
+ \left (\Divtau \bu_{\gamma}-\bu_{\iO} \cdot \bn_{\mid \gamma },
  \mu_{\gamma}\right )_{\gamma} \\
&= -\left(\bu_{\iO}, \nabla \mu_{\iO} \right)_{\iO} + \left( \bu_{\iO}
\cdot \bn_{\mid \gamma}, \mu_{\iO}  \right)_\gamma - \left(
\bu_\gamma, \nabla_\tau \mu_\gamma \right)_{\gamma} - \left( \bu_{\iO} \cdot \bn_{\mid \gamma },
  \mu_{\gamma}\right )_{\gamma},
\end{align*}
from which the proof of the theorem follows.
\end{proof}

\subsection{The interface problem}
\label{A3Sub:IFm2ConvFact}
As for the GTP-Schur method, we derive an interface problem which in this case is associated with Ventcell-to-Robin transmission conditions~\eqref{A3TCs1M2}-\eqref{A3TCs2M2}.
Towards this end, we define the following Ventcell-to-Robin operator $ \iS^{\text{VtR}}_{i} $, which depends on the parameter $ \alpha $, for $ i=1,2 $; $ j=(3-i) $:
\begin{equation*} 
\begin{array}{rcl} \iS^{\text{VtR}}_{i}:  L^{2}(0,T; L^{2}(\gamma)) \times L^{2} (0,T;L^{2}(\Omega_{i})) \times H_{*}^{1} (\Omega_i)  \times  H_{0}^{1}(\gamma) 
&\rightarrow& L^{2}(0,T; L^{2}(\gamma)) \hspace{6mm}\\
\hspace{2cm}\iS^{\text{VtR}}_{i}(\theta_\gamma, q, p_{0}, p_{0,\gamma}) &\mapsto &-\bu_{i} \cdot \bn_{j \mid \gamma} + \alpha p_{i, \gamma},
\end{array}
\end{equation*}
where $ (p_{i}, \bu_{i}, p_{i,\gamma}, \bu_{\gamma, i}) $ is the solution of the subdomain problem with Ventcell boundary conditions
\begin{equation} \label{A3subprobM2-2}
\begin{array}{rll} s_{i}\partial _{t} p_{i}+\Div \bu_{i} & = q & \text{in} \; \Omega_{i} \times (0,T) , \\
\bu_{i} & = -\bK_{i}\nabla p_{i} & \text{in} \; \Omega_{i} \times (0,T),\\
-\bu_{i} \cdot \bn_{i} + \alpha p_{i, \gamma} +s_{\gamma}\partial _{t} p_{i, \gamma} +\Divtau \bu_{\gamma, i}  & =\theta_\gamma
& \text{on} \; \gamma \times (0,T),\\
\bu_{\gamma, i}& = -\bK_{\gamma} \delta \nabla_{\tau} p_{i, \gamma} & \text{in} \; \gamma \times (0,T), \\
p_{i} &=0 & \text{on} \; \left (\partial \Omega_{i} \cap \partial \Omega \right) \times (0,T), \\
p_{i, \gamma} &= 0 & \text{on} \; \partial \gamma \times (0,T),\\
p_{i}(\cdot,0) & = p_{0} & \text{in} \; \Omega_{i}, \\
p_{i, \gamma}(\cdot, 0) & = p_{0,\gamma} & \text{in} \; \gamma.
\end{array}
\end{equation}

The interface problem with two Lagrange multipliers is then
\begin{equation} \label{A3finterfaceM3}
\begin{array}{rl} 
\theta_{\gamma,1} &= \iS^{\text{VtR}}_{2}(\theta_{\gamma,2}, q_{2}, p_{0,2},  p_{0,\gamma}) \vspace{0.12cm}\\
\theta_{\gamma,2} & =  \iS^{\text{VtR}}_{1}(\theta_{\gamma,1}, q_{1}, p_{0,1}, p_{0,\gamma})
\end{array} \qquad \text{on} \; \gamma \times (0,T),
\end{equation}
or equivalently
\begin{equation} \label{A3finterfaceM3-linear}
\begin{array}{rl} 
\theta_{\gamma,1} -\iS^{\text{VtR}}_{2}(\theta_{\gamma,2}, 0,0,0) & =\iS^{\text{VtR}}_{2}(0, q_{2}, p_{0,2},  p_{0,\gamma}) \vspace{0.12cm}\\
\theta_{\gamma,2} - \iS^{\text{VtR}}_{1}(\theta_{\gamma,1} ,0,0,0) & =\iS^{\text{VtR}}_{1}(0, q_{1}, p_{0,1},  p_{0,\gamma}) 
\end{array} \qquad \text{on} \; \gamma \times (0,T).
\end{equation}
The discrete counterpart of this problem can be solved iteratively using Jacobi iterations or GMRES.
The former choice yields an algorithm equivalent to the OSWR algorithm for the reduced fracture model~\eqref{A31dprobSub} - \eqref{A31dprobFrac} and is written as follows: starting with an initial guess
$ \theta_{\gamma,j}^{0} $, $j=3-i$, on $ \gamma \times (0,T) $ for the first iteration,
$$ -\bu_{i}^{0} \cdot \bn_{i} + \alpha  \; p_{i, \gamma}^{0} + s_{\gamma} \partial _{t} p^{0}_{i, \gamma}
+\Divtau \bu^{0}_{\gamma, i} = \theta_{\gamma,j}^{0},
$$
then at the $ k^{th} $ iteration, $ k=1, \hdots,  $ solve in each subdomain the time-dependent problem, for $ i=1,2; $ $ j=(3-i), $
\begin{equation} \label{A3FracSchwarzalgMixed}
\hspace{-0.5cm}\begin{array}{rll} s_{i} \partial _{t} p^{k}_{i}+\Div \bu^{k}_{i}& = q_{i} & \text{in} \; \Omega_{i} \times (0,T) , \\
\bu^{k}_{i}& =-\bK_{i}\nabla p^{k}_{i} & \text{in} \; \Omega_{i} \times (0,T) , \\
- \bu^{k}_{i} \cdot \bn_{i} + \alpha p^{k}_{i, \gamma} +s_{\gamma} \partial _{t} p^{k}_{i, \gamma} +\Divtau \bu^{k}_{\gamma, i} & =
\theta_{\gamma,j}^{k-1}  & \text{on} \; \gamma \times (0,T),\\
\bu^{k}_{\gamma, i} & =  -\bK_{f,\tau} \delta \nabla_{\tau} p^{k}_{i, \gamma} & \text{on} \; \gamma \times (0,T),\\
p^{k}_{i} & = 0 & \text{on} \; \left (\partial \Omega_{i} \cap \partial \Omega\right ) \times (0,T),\\
p^{k}_{i, \gamma} & = 0 & \text{on} \; \partial \gamma \times (0,T),\\
p^{k}_{i}(\cdot,0) & = p_{0, i} & \text{in} \; \Omega_{i}, \\
p^{k}_{i, \gamma}(\cdot,0) & = p_{0, \gamma} & \text{in} \; \gamma,
\end{array} 
\end{equation}
with $\theta_{\gamma,j}^{k-1} =- \bu^{k-1}_{j} \cdot \bn_{i} + \alpha p^{k-1}_{j, \gamma} $
on $\gamma \times (0,T)$.

The convergence of algorithm~\eqref{A3FracSchwarzalgMixed} depends on the choice of the parameter~$ \alpha $. Thus we extend the analysis for the convergence factor of the OSWR algorithm derived in the case without fractures
\cite{Bennequin,Gander2006,JaphetDD9} to this algorithm and from that, one can calculate the optimal or
optimized values of the parameter $ \alpha $.

\subsection{Convergence factor formula for computing the optimized parameter}
\label{A3Sub:ConvFact}
In this section, we extend the two domain analysis~\cite{Bennequin,OSWR3sub,PhuongThesis, JaphetDD9,VMartin} to derive the convergence factor of the OSWR algorithm introduced in Section~\ref{A3Sub:IFm2ConvFact} for a reduced fracture model for compressible flow. Towards this end, we consider the two half-space decomposition
$ \Omega_{-} =\R ^{-} \times \R,\quad \Omega_{+} = \R^{+} \times \R$
and write the OSWR algorithm, applied to the fractured model, in the primal formulation: at the $ k^{\text{th}} $ Jacobi iteration, solve
\begin{equation} \label{Append:A3FracSchwarzalg1}
\hspace{-0.2cm}\begin{array}{rll} s_{-}\partial _{t} p^{k}_{-}+\Div (-\bK_{-}\nabla p^{k}_{-})& = q & \text{in} \; \Omega_{-} \times (0,T) , \\
\ds\bK_{-} \frac{\partial p^{k}_{-}}{\partial \bn_{-}} + \alpha p^{k}_{-} +s_{\gamma}\partial _{t} p^{k}_{-} +\Divtau (-\bK_{f, \tau} \delta \nabla_{\tau} p^{k}_{-})  & =\ds \bK_{+} \frac{\partial p^{k-1}_{+}}{\partial \bn_{-}} & \hspace{-0.3cm} + \alpha p^{k-1}_{+}  \\
& & \text{on} \; \gamma \times (0,T),\\
p^{k}_{-}(\cdot,0) & = p_{0} & \text{in} \; \Omega_{-},
\end{array} 
\end{equation}
and
\begin{equation} \label{Append:A3FracSchwarzalg2}
\hspace{-0.2cm}\begin{array}{rll} s_{+}\partial _{t} p^{k}_{+}+\Div (-\bK_{-}\nabla p^{k}_{+})& = q & \text{in} \; \Omega_{+} \times (0,T) , \\
\ds\bK_{+} \frac{\partial p^{k}_{+}}{\partial \bn_{+}} + \alpha p^{k}_{+} +s_{\gamma}\partial _{t} p^{k}_{+} +\Divtau (-\bK_{f, \tau} \delta \nabla_{\tau} p^{k}_{+})  & = \ds\bK_{-} \frac{\partial p^{k-1}_{-}}{\partial \bn_{+}} & \hspace{-0.3cm} + \alpha p^{k-1}_{-}  \\
& & \text{on} \; \gamma \times (0,T),\\
p^{k}_{+}(\cdot,0) & = p_{0} & \text{in} \; \Omega_{+},
\end{array} 
\end{equation}
where $ \gamma =  \{ x=0 \} $ is the fracture. We assume that the permeability is isotropic:
$$ \bK_{pm} = \mathfrak{K}_{\pm} \bI , \; \; \text{and} \; \bK_{f,\tau} = \mathfrak{K}_{f}, $$
 where $ \bI $ is the 2D identity matrix, and that the solution of the problem decays at infinity. As the problem is linear, we only consider $ q = 0 $ and $ p_{0} = 0 $, and analyse the convergence of \eqref{Append:A3FracSchwarzalg1}-\eqref{Append:A3FracSchwarzalg2} to the zero solution.  
We use a Fourier transform in time and in the $ y $ direction
with parameters $ \omega $ and $ \eta $, respectively, to obtain the Fourier functions $ \hat{p}^{k}_{\pm} $ in time $ t $ and $ y $ of $ p^{k}_{\pm} $, as the solutions to the ordinary differential equation in $ x $
\begin{equation*}
-\mathfrak{K} \frac{\partial^{2} \hat{p}}{\partial x^{2}} + \left (s i\omega + \mathfrak{K} \eta^{2} \right ) \hat{p}= 0. 
\end{equation*}
Thus
$$ \hat{p} = A (\eta, \omega) e^{r^{+}x} +B (\eta, \omega) e^{r^{-}x},
$$
where $ r^{\pm} $ are the roots of the characteristic equation
$$ -\mathfrak{K} r^{2} + \left (s i\omega + \mathfrak{K} \eta^{2} \right ) =0,
$$
so
$$ r^{\pm} = \pm \frac{\sqrt{\Delta}}{2 \mathfrak{K}}, \quad \Delta = 4 \mathfrak{K} \left (s i\omega + \mathfrak{K} \eta^{2} \right ).
$$
Here and throughout this article, we use the square root symbol $\sqrt{\hspace{1.2mm}}$ to denote the complex square root with positive real part.
In order to work with at least square integrable functions in time and space, we look
for solutions which do not increase exponentially in $x$. Since $\Re r^+ > 0$ and $\Re r^{-} < 0$,
we obtain
\begin{equation*} \label{Append:A3FracODEsol}
\begin{array}{ll}
\hat{p}^{k}_{-} & = A^{k}(\eta, \omega) e^{r^{+}(s_{-}, \mathfrak{K}_{-}, \eta, \omega) x}, \\
\hat{p}^{k}_{+} &= B^{k} (\eta, \omega) e^{r^{-}(s_{+}, \mathfrak{K}_{+}, \eta, \omega) x}.
\end{array}
\end{equation*}
Substituting these formulas into the transmission conditions on the interface $ \gamma \times (0,T) $ (i.e. the second equations of \eqref{Append:A3FracSchwarzalg1} and \eqref{Append:A3FracSchwarzalg2}), we find
\begin{equation} \label{Append:A3FracTCsFourier}
\begin{array}{rl}
\left( \mathfrak{K}_{-} r^{+}(s_{-}, \mathfrak{K}_{-}, \eta, \omega) + \alpha + s_{\gamma} i \omega + \mathfrak{K}_{f} \delta \eta^{2}\right) & \hat{p}^{k}_{-}(0,\eta, \omega) \vspace{0.2cm}\\ 
& \hspace{-1.1cm} = \left( \mathfrak{K}_{+} r^{-}(s_{+}, \mathfrak{K}_{+}, \eta, \omega) + \alpha \right) \hat{p}^{k-1}_{+} (0,\eta, \omega),\vspace{0.3cm}\\
\left(- \mathfrak{K}_{+} r^{-}(s_{+}, \mathfrak{K}_{+}, \eta, \omega) + \alpha + s_{\gamma} i \omega + \mathfrak{K}_{f} \delta \eta^{2} \right) & \hat{p}^{k}_{+} (0,\eta, \omega)\vspace{0.2cm}\\ 
& \hspace{-1.1cm} =\left( -\mathfrak{K}_{-} r^{+}(s_{-}, \mathfrak{K}_{-}, \eta, \omega) + \alpha \right) \hat{p}^{k-1}_{-}(0,\eta, \omega).
\end{array}
\end{equation}
From \eqref{Append:A3FracTCsFourier} using induction ans using
$ \zeta$ to denote $s_{\gamma} i \omega + \mathfrak{K}_{f} \delta \eta^{2},
$
we obtain
\begin{align*}
\hat{p}^{2k}_{-}(0,\eta, \omega) & = \frac{ \mathfrak{K}_{+} r^{-}(s_{+}, \mathfrak{K}_{+}, \eta, \omega) + \alpha }{\mathfrak{K}_{-} r^{+}(s_{-}, \mathfrak{K}_{-}, \eta, \omega) + \alpha +\zeta } \quad \hat{p}^{2k-1}_{+} (0,\eta, \omega)\\
& = \left (\frac{ \mathfrak{K}_{+} r^{-}(s_{+}, \mathfrak{K}_{+}, \eta, \omega) + \alpha }{\mathfrak{K}_{-} r^{+}(s_{-}, \mathfrak{K}_{-}, \eta, \omega) + \alpha +\zeta }\right ) \;  \left (\frac{-\mathfrak{K}_{-} r^{+}(s_{-}, \mathfrak{K}_{-}, \eta, \omega) + \alpha }{- \mathfrak{K}_{+} r^{-}(s_{+}, \mathfrak{K}_{+}, \eta, \omega) + \alpha +\zeta }\right ) \quad \hat{p}^{2k-2}_{-}(0,\eta, \omega) \\
& =  \rho^{k}_{\text{f}} \hat{p}^{0}_{-}(0,\eta, \omega),
\end{align*}
where
\begin{equation*}
\rho_{\text{f}} = \left (\frac{ \mathfrak{K}_{+} r^{-}(s_{+}, \mathfrak{K}_{+}, \eta, \omega) + \alpha }{\mathfrak{K}_{-} r^{+}(s_{-}, \mathfrak{K}_{-}, \eta, \omega) + \alpha +\zeta }\right ) \; \left ( \frac{-\mathfrak{K}_{-} r^{+}(s_{-}, \mathfrak{K}_{-}, \eta, \omega) + \alpha }{- \mathfrak{K}_{+} r^{-}(s_{+}, \mathfrak{K}_{+}, \eta, \omega) + \alpha +\zeta) }\right ),
\end{equation*}
is the convergence factor of the algorithm~\eqref{Append:A3FracSchwarzalg1}-\eqref{Append:A3FracSchwarzalg2}. 
Similarly, we obtain
$$ \hat{p}^{2k}_{+}(0,\eta, \omega) = \rho_{f}^{k} \hat{p}^{0}_{+}(0,\eta, \omega),
$$
Thus, we can calculate the parameter $ \alpha $ in such a way as to minimize this continuous convergence factor:
\begin{equation}\label{Append:A3FracnumConvFact}
\min_{\alpha >0} \left ( \max_{\lvert \eta \rvert \in \left [\frac{\pi}{L}, \frac{\pi}{h}\right ], \lvert \omega \rvert \in \left [\frac{\pi}{T}, \frac{\pi}{\Delta t}\right ]} \big\vert \rho_{\text{f}}  (s_{+}, \mathfrak{K}_{+}, s_{-}, \mathfrak{K}_{-}, \alpha, \eta, \omega)
\big\vert \right),
\end{equation}
where $ L $ is the length of the fracture, $ h $ is the spatial mesh size, $ T $ is the final time and $ \Delta t $ is the maximum time step of the discretization in time. 
\begin{remark}
One could make use of the two-sided Robin as in~\cite{PhuongSINUM}. In this article, the optimized one-sided Robin parameter works well since in the test case we considered, the two subdomains $ \Omega_{1} $ and $ \Omega_{2} $ (representing the rock matrix) have similar physical properties (though a comparison of the performance of the one-sided and two-sided Robin might be considered). 
\end{remark}
In our applications, the fracture is assumed to have much larger permeability than the surrounding domain, which suggests that the time step inside the fracture should be small compared with that of the surrounding matrix subdomains. As both of the methods derived in Sections~\ref{A3Sub:SchurM1} and \ref{A3subsec:VentcellDiff} are global in time, i.e. the subdomain problem is solved over the whole time interval before the information is exchanged on the space-time interface, we can use different time steps in the fracture and in the rock matrix.  In the next section, we consider
the semi-discrete problem in time with nonconforming time grids. 
%
%
%
%
\section{Nonconforming discretization in time}
\label{A4Subsec:TimeNonc}
Let $ \iT_{1}, \iT_{2} $ and $ \iT_{\gamma} $
be three different partitions of the time interval $ (0,T) $ into subintervals $ J^{i}_{m}=(t_{m-1}^{i}, t_{m}^{i}] $ for $ m=1, \hdots, M_{i} $, and $ i=1,2, \gamma $,  (see Figure~\ref{A3Fig:TimeFrac}). For simplicity, we consider uniform partitions only, and denote by $ \Delta t_{i}, \; i=1,2,\gamma $ the corresponding time steps. Assume that $ \Delta t_{\gamma} \ll \Delta t_{i}, \; i=1,2. $
We use the lowest order discontinuous Galerkin method \cite{BlayoHJ, OSWRDG,  thomee1997galerkin},
which is a modified backward Euler method. The same idea can be generalized to higher order methods.
\begin{figure}[htbp]
\centering
\begin{minipage}[c]{0.8\linewidth}
\setlength{\unitlength}{1pt} 
\begin{picture}(140,120)(0,0)
\thicklines
\put(0,3){\includegraphics[scale=0.6]{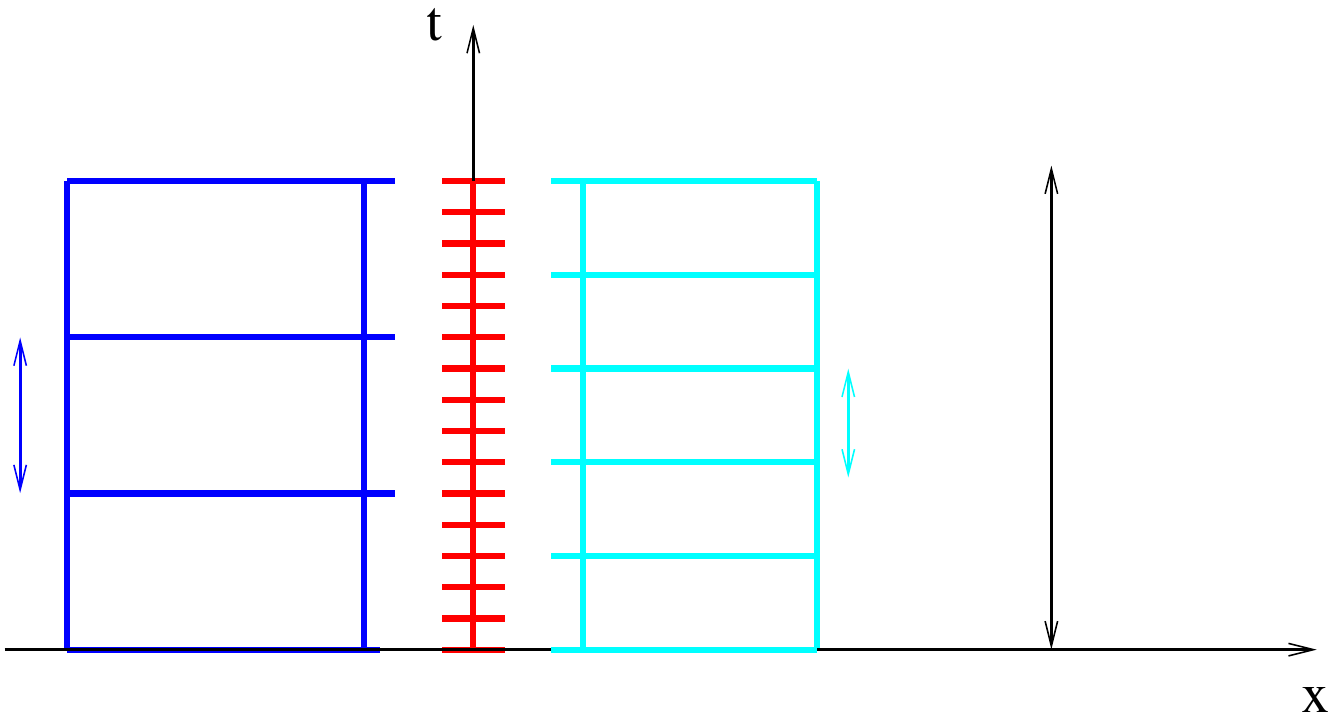} \\}
\put(30,-5){$ \textcolor{blue}{\Omega_{1}} $}
\put(120,-5){$\textcolor{darkcyan}{ \Omega_{2}} $}
\put(80,-5){$ \gamma $}
\put(186,55){$ T = \textcolor{blue}{M_{1} \Delta t_{1}} = \textcolor{darkcyan}{M_{2} \Delta t_{2}} = M_{\gamma} \Delta t_{\gamma}$}
\put(-18,54){$\textcolor{blue}{\Delta t_{1}} $}
\put(150,51){$\textcolor{darkcyan}{\Delta t_{2}} $}
\end{picture}
\end{minipage} \vspace{0.2cm}
\caption{Nonconforming time grids in the rock matrix and in the fracture.}
\label{A3Fig:TimeFrac} 
\end{figure}

We denote by $ P_{0}(\mathcal{T}_{i}, L^{2}(\gamma)) $ the space of functions piecewise constant in time on grid
$ \mathcal{T}_{i} $ with values in $ L^{2}(\gamma) $:
\begin{equation*} 
P_{0}(\mathcal{T}_{i}, L^{2}(\gamma)) = \left \{ \psi: (0,T) \rightarrow L^{2}(\gamma),
\psi \; \; \text{is constant on} \;  J, \ \forall J \in \mathcal{T}_{i} \right \}. \vspace{-0.1cm}
\end{equation*}
In order to exchange data on the space-time interface between different time grids, we use,
for $i,j$ in $\{1,2,\gamma\}$, the
$ L^{2} $ projection $ \Pi_{j i} $ from  $ P_{0} (\mathcal{T}_{i}, L^{2}(\gamma)) $ to $ P_{0}(\mathcal{T}_{j},L^{2}(\gamma)) $:
for $ \psi \in P_{0} (\mathcal{T}_{i},L^{2}(\gamma) )$,
$ \Pi_{ji} \psi \hspace{-2pt} \mid_{J^{j}_{m}} $ is the average value of $ \psi $ on $ J^{j}_{m} $,
for $ m=1, \dots, M_{j} $. 

\subsection*{For the GTP-Schur method} 
The unknown $ \lambda $ on the interface represents the fracture pressure, thus $ \lambda $ is piecewise constant in time
on grid $ \iT_{\gamma} $. In order to obtain Dirichlet boundary data for solving subdomain problem~\eqref{A3subprobM1}, we project $ \lambda $ into $P_{0}(\mathcal{T}_{i},L^{2}(\gamma))$, for $ i=1,2: $
 $$ p_{i} = \Pi_{i\gamma} (\lambda),  \quad \text{on} \; \gamma, \; i=1,2.$$ 
The semi-discrete counterpart of the interface problem~\eqref{A3finterfaceM1-short} is obtained by weakly enforcing the fracture problem over each time sub-interval of $ \iT_{\gamma} $ as follows
\begin{eqnarray} \label{A3NoncTime-M1}
    \ds
s_{\gamma} \left (\lambda^{m+1} - \lambda^{m}\right ) + \int_{t^{m}_{\gamma}}^{t^{m+1}_{\gamma}} \Divtau \bu_{\gamma}^{m+1} 
\ds=\int_{t^{m}_{\gamma}}^{t^{m+1}_{\gamma}} \bigg (\sum_{i=1}^{2} \Pi_{\gamma i}  \left (\iS^{\text{DtN}}_{i}(\Pi_{i\gamma} (\lambda), q_{i}, p_{0,i}) \right ) \bigg), \nonumber\\
\begin{array}{rll}
\bu_{\gamma}^{m+1}& = -\bK_{\gamma} \delta \nabla_{\tau} \lambda^{m+1} & \hspace{1cm} \text{in} \; \gamma, \\
\lambda^{m+1} & = 0 & \hspace{1cm} \text{on} \; \partial \gamma,\\
\lambda^{0} & = p_{0,\gamma} & \hspace{1cm} \text{in} \; \gamma,
\end{array}\hspace{1cm}
\end{eqnarray}
where $\lambda^{m}=\lambda_{J^{\gamma}_m}$, for $ m=0, \hdots, M_{\gamma}-1 $. 

For a function piecewise constant in time $ \varphi $ on the fine grid $ \iT_{\gamma} $, the semi-discrete Neumann-Neumann preconditioner (still denoted by $ \bP_{NN}^{-1} $) is defined by:
\begin{equation} \label{A3NoncTime-NNPrecond}
\bP_{NN}^{-1} \varphi := \sum_{i=1}^{2} \sigma_{i} \Pi_{\gamma i} \left ( \left (\check{\iS}_{i}^{DtN}\right )^{-1} \left
(\Pi_{i \gamma} \left (\varphi \right ) \right) \right ),
\end{equation}
where we have solved the subdomain problem with Neumann-Neumann data projected from $ \iT_{\gamma} $ onto $ \iT_{i} $, $ i=1,2, $ then extracted the pressure trace on the interface and projected backward from $ \iT_{i} $ onto $ \iT_{\gamma} $. Thus the interface problem is defined on the fracture time grid.
\begin{remark} \label{A3rmrkNNprecond}
  In the nonconforming semi-discrete (in time) case,
we see  from~\eqref{A3NoncTime-NNPrecond} that $\bP_{NN}^{-1} $ is not strictly a preconditioner 
and may affect the accuracy of the scheme due the projection operators used to define it.
This is indeed what we observed in the
numerical experiments in Section~\ref{A3Sec:Num}.
\end{remark}
\subsection*{For the GTO-Schwarz method}
In the GTO-Schwarz method, there are two interface unknowns representing the linear combination of the fracture pressure and some terms from the fracture problem.
Thus we let $\theta_{\gamma,j} \in P_{0} (\iT_{\gamma}, L^{2}(\Gamma)) $, for $ j=1,2 $.  In order to obtain Ventcell boundary data for solving the subdomain problem~\eqref{A3subprobM2-short}, we project $ \theta_{\gamma,j} $ onto the $ \iT_{i} $, for $ i=1,2; \ j=3-i: $
 $$ -\bu_{i} \cdot \bn_{i} + \alpha p_{i} +s_{\gamma}\partial _{t} p_{i} +\Divtau \bu_{\gamma}  =
\Pi_{i\gamma} (\theta_{\gamma,j}), \quad \text{on} \; \gamma, \; i=1,2.$$ 
\begin{remark}
This setting is different from the case of usual domain decomposition (without fractures) analyzed in~\cite{PhuongThesis,PhuongSINUM}, where the two interface unknowns represent the Robin data in each subdomain and thus are chosen to be constant on the associated subdomain's time grid, i.e. $ \theta_{\gamma,i} \in P_{0} (\iT_{i}, L^{2}(\Gamma)) $, for $ i=1,2 $. 
\end{remark}

The semidiscrete-in-time counterpart
of~\eqref{A3finterfaceM3} is weakly enforced over each time sub-interval of the fracture time grid as follows:
for all $  m=0, \hdots, M_{\gamma}-1, $
\begin{equation} \label{A3NoncTime-M2}
\begin{array}{rl} 
\ds\int_{t^{m}_{\gamma}}^{t^{m+1}_{\gamma}} \theta_{\gamma,1} &= \ds\int_{t^{m}_{\gamma}}^{t^{m+1}_{\gamma}}  \Pi_{\gamma  2} \left ( \iS^{\text{VtR}}_{2}(\Pi_{2\gamma}(\theta_{\gamma,2}), q_{2}, p_{0,1},  p_{0,\gamma})\right ), \vspace{0.12cm}\\
\ds\int_{t^{m}_{\gamma}}^{t^{m+1}_{\gamma}}\theta_{\gamma,2} & =\ds \int_{t^{m}_{\gamma}}^{t^{m+1}_{\gamma}}  \Pi_{\gamma 1} \left ( \iS^{\text{VtR}}_{1}(\Pi_{1\gamma}(\theta_{\gamma,1}), q_{1}, p_{0,1}, p_{0,\gamma})\right ),
\end{array} \qquad  \text{on} \; \gamma, 
\end{equation}
\begin{remark} \label{A3rmrkOSWRaccuracy}
We point out that with the GTO-Schwarz method as with the GTP-Schur method preconditioned by a Neumann-Neumann preconditioner (cf. Remark~\ref{A3rmrkNNprecond}), we can not hope to gain in accuracy in the fracture by using a finer grid
in the fracture only
since the fracture problem is actually solved on the coarser time grids of the two subdomains. We will see this in the numerical experiments.
\end{remark}
\section{Numerical results}
\label{A3Sec:Num}
In all of the numerical experiments, for the spatial discretization we use mixed finite elements with the
lowest order Raviart-Thomas spaces on rectangles~\cite{brezzi1991mixed,RobertsThomas}.
\begin{remark} \label{A3rmrk:CostVent}
The subdomain problem of the GTO-Schwarz method corresponding to Ventcell boundary conditions is somewhat more complicated than that of GTP-Schur method (problem~\eqref{A3subprobM1}). Consequently, for solving problem~\eqref{A3subprobM2}, one needs to introduce Lagrange multipliers (see e.g.~\cite{brezzi1991mixed,RobertsThomas}) on the interface to handle the Ventcell conditions (representing the fracture problem). 
\end{remark}

We carry out some preliminary experiments to investigate the numerical performance of the two methods proposed above. We consider the test case pictured in Figure~\ref{A3Fig:testGeo} where the domain is a rectangle of dimension $ 2 \times 1 $ and is divided into two equally sized subdomains by a fracture of width $ \delta = 0.001 $ parallel to the $ y $ axis. The permeability tensors in the subdomains and in the fracture are isotropic: $ \bK = \mathfrak{K}_{i} \bI, \; i=1,2,f, $ and $ \mathfrak{K}_{i} $ is assumed to be constant. Here we choose $ \mathfrak{K}_{1} = \mathfrak{K}_{2} = 1 $ and $ \mathfrak{K}_{f} = 10^{3} $ (so that $ \mathfrak{K}_{f} \delta =1 $). 
A pressure drop of $ 1 $ from the bottom to the top of the fracture is imposed.
On the external boundaries of the subdomains a no flow 
boundary condition is imposed except on the lower fifth (length $ 0.2 $) of both lateral boundaries where a Dirichlet condition is imposed: $ p=1 $ on the right and $ p=0 $ on the left. See Figure~\ref{A3Fig:testGeo}.

We consider a uniform rectangular mesh with size $ h=1/100 $. In time,  we fix $ T = 0.5 $ and use uniform time partitions in the subdomains with time step $ \Delta t_{i}, i=1,2, $ and in the fracture with varying time step $ \Delta t_{\gamma} $. We first consider the case with the same time step throughout the domain, $\Delta t_{1} = \Delta t_{2}  = \Delta t_{\gamma} = \Delta t= T/300 $. 

\begin{figure}[htbp] 
\vspace{-0.2cm}\centering
\begin{minipage}[c]{0.4 \linewidth}
\includegraphics[scale=0.7]{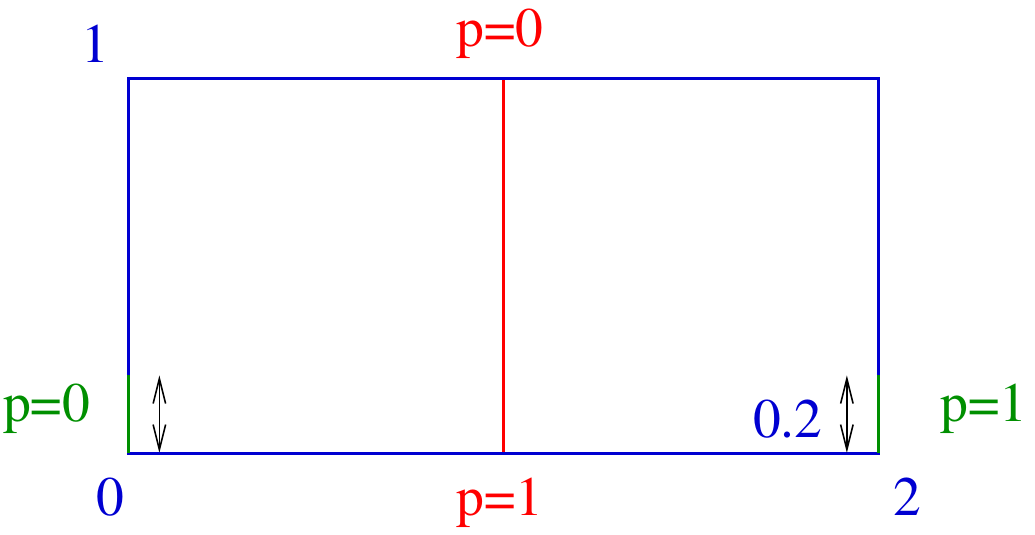} 
\end{minipage} \vspace{-0.3cm}
\caption{Geometry of the test case where the fracture is considered as an interface.} 	
\label{A3Fig:testGeo} \vspace{-0.3cm}
\end{figure}
%
%
In Figure~\ref{A3Fig:CompressSolution}, snapshots at different times of the pressure field and of the flow field (on a coarse grid for visualization)  are shown. The length of each arrow is proportional to 
\begin{figure}[htbp]
\centering
\begin{minipage}[c]{0.45 \linewidth}
\includegraphics[scale=0.25]{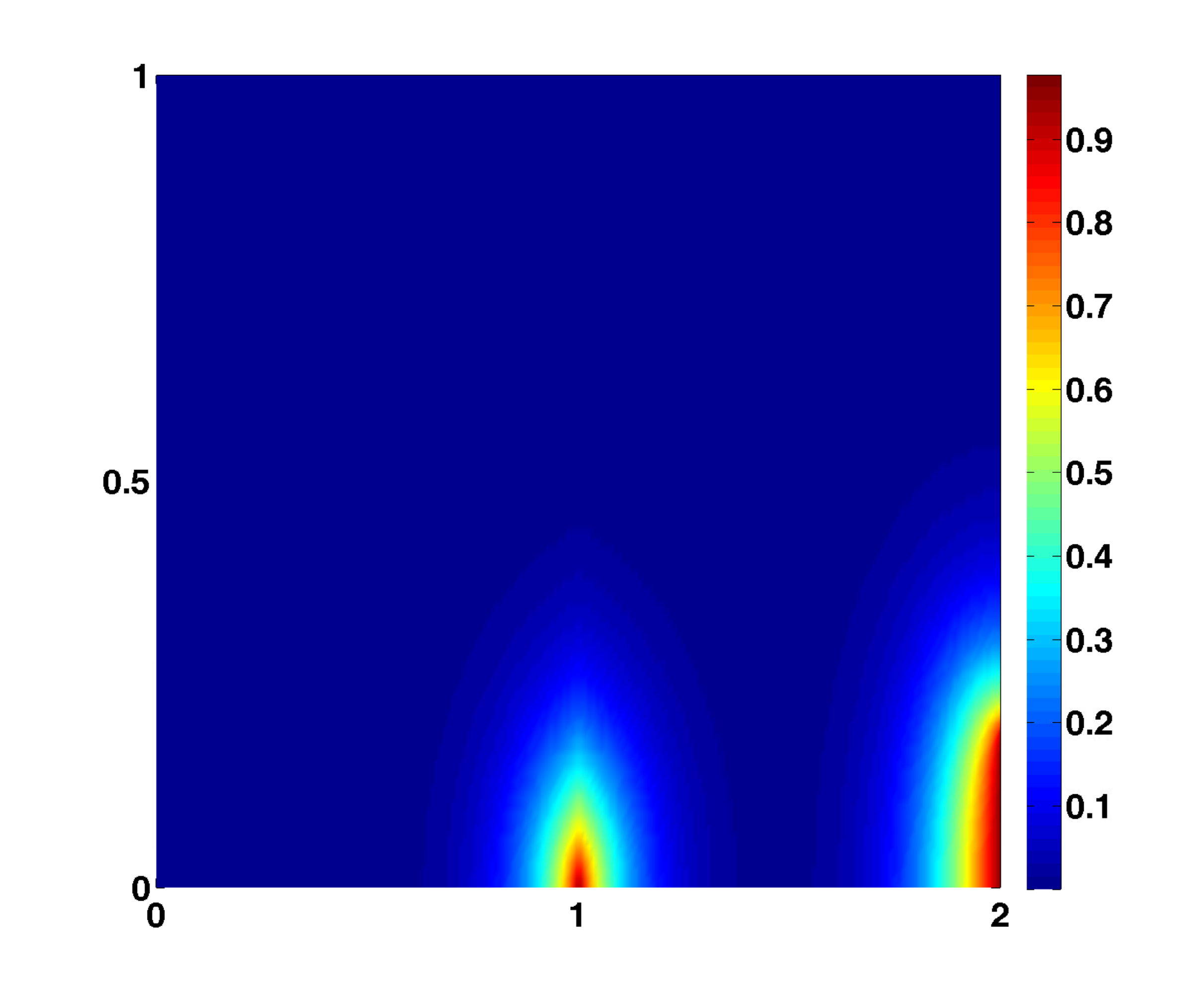}
\end{minipage} \hspace{5pt}
\begin{minipage}[c]{0.45 \linewidth}
\includegraphics[scale=0.25]{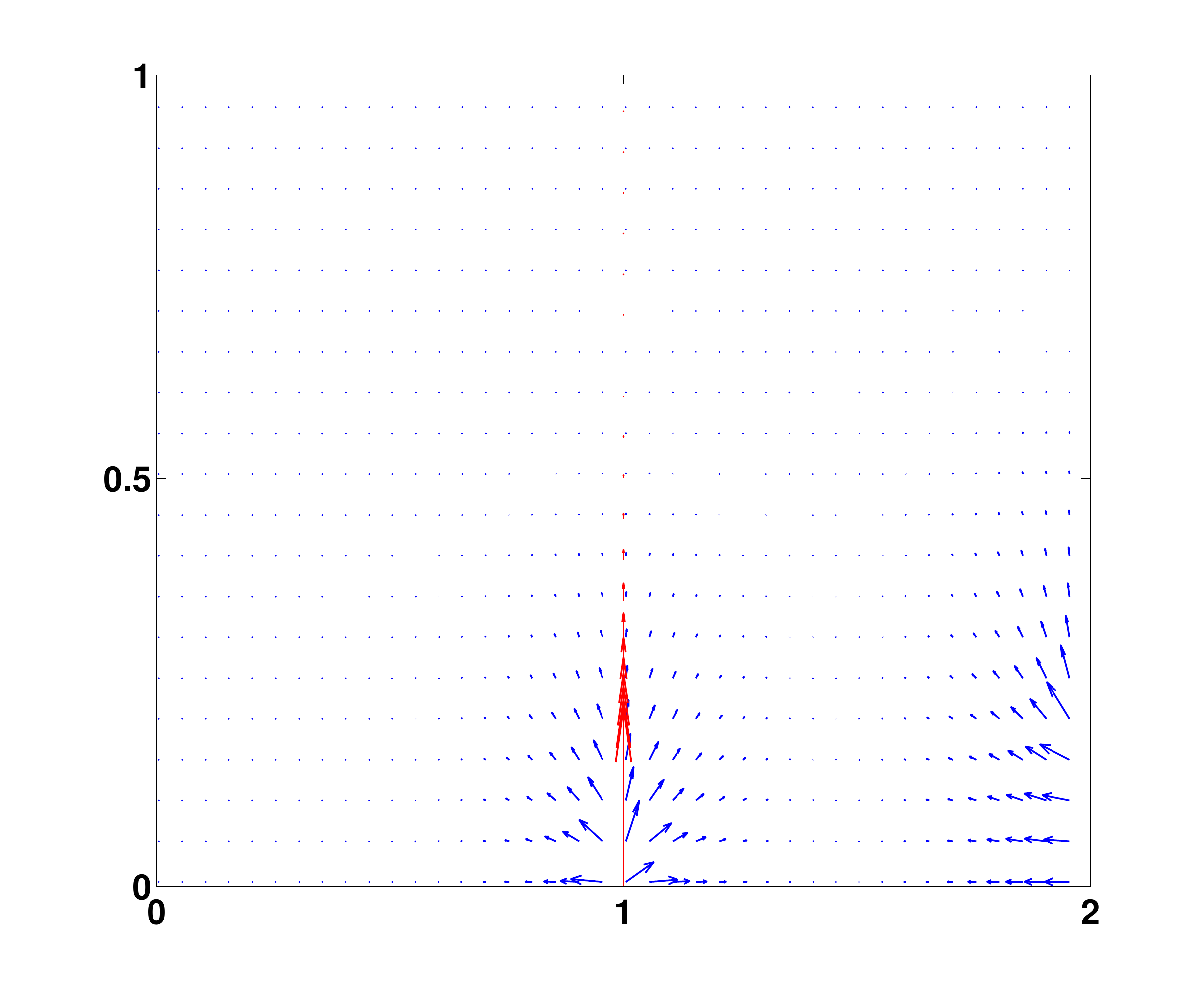}
\end{minipage} \\
\begin{minipage}[c]{0.45 \linewidth}
\includegraphics[scale=0.25]{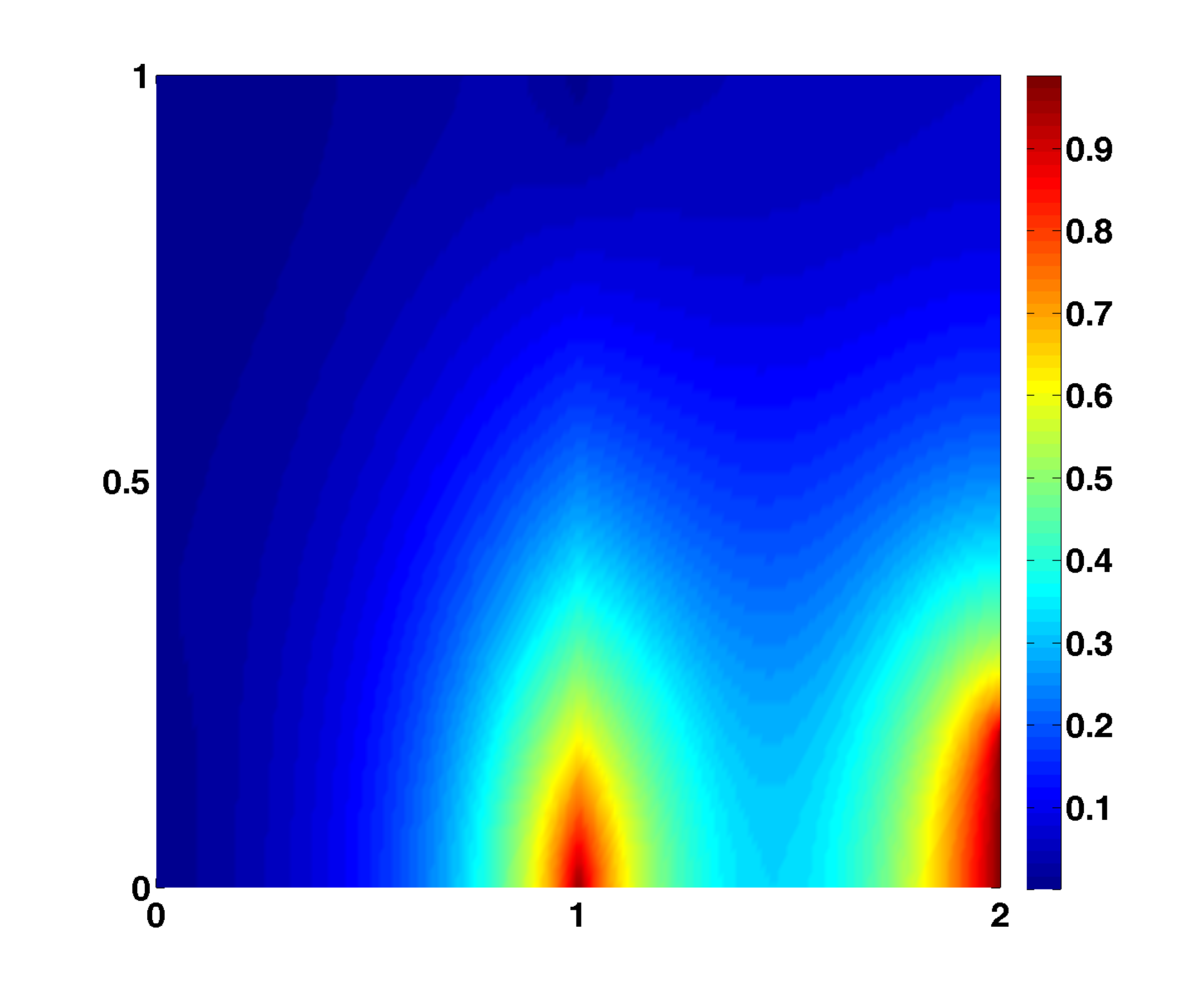}
\end{minipage} \hspace{5pt}
\begin{minipage}[c]{0.45 \linewidth}
\includegraphics[scale=0.25]{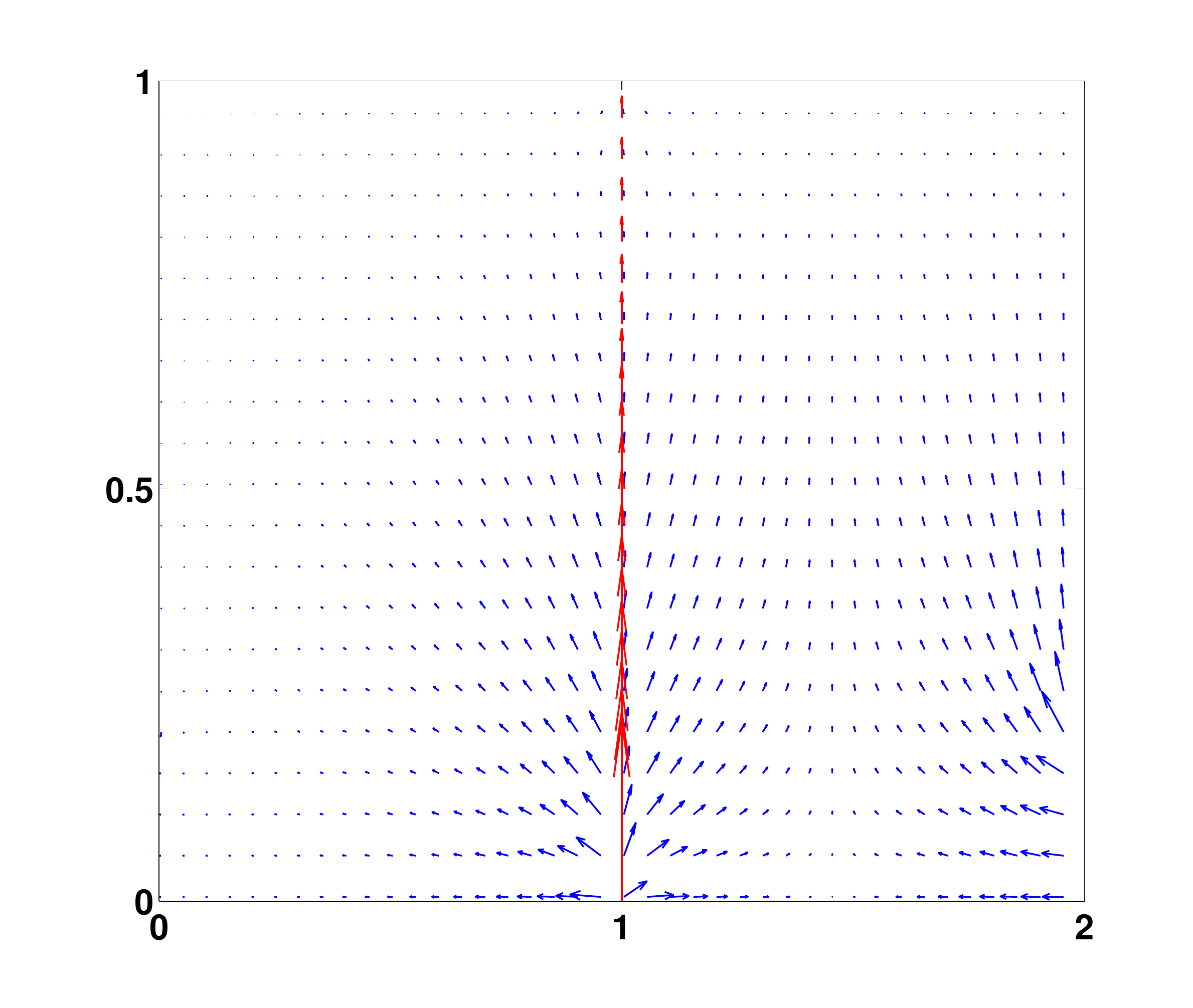}
\end{minipage} \\
\begin{minipage}[c]{0.45 \linewidth}
\includegraphics[scale=0.25]{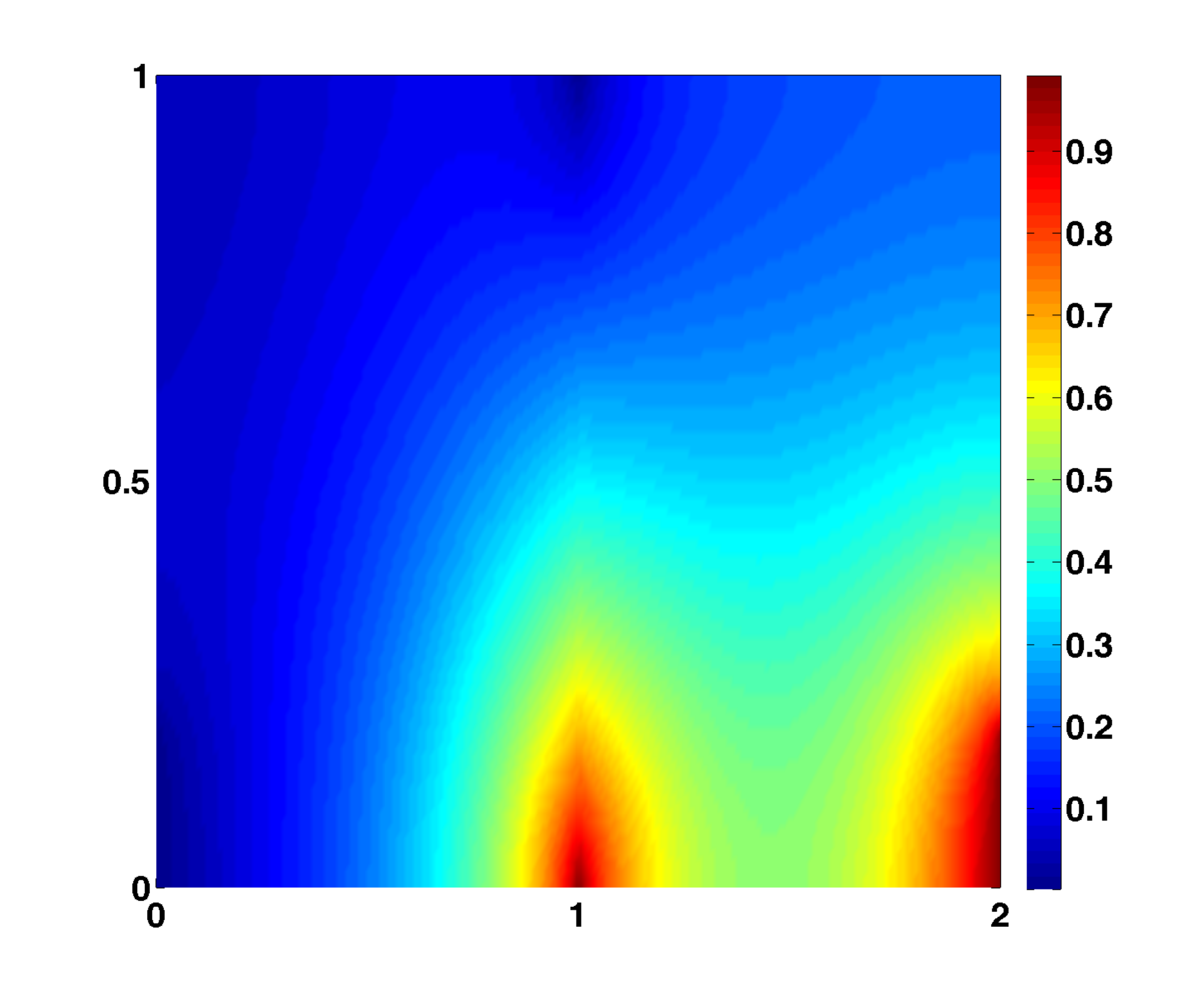}
\end{minipage} \hspace{5pt}
\begin{minipage}[c]{0.45 \linewidth}
\includegraphics[scale=0.25]{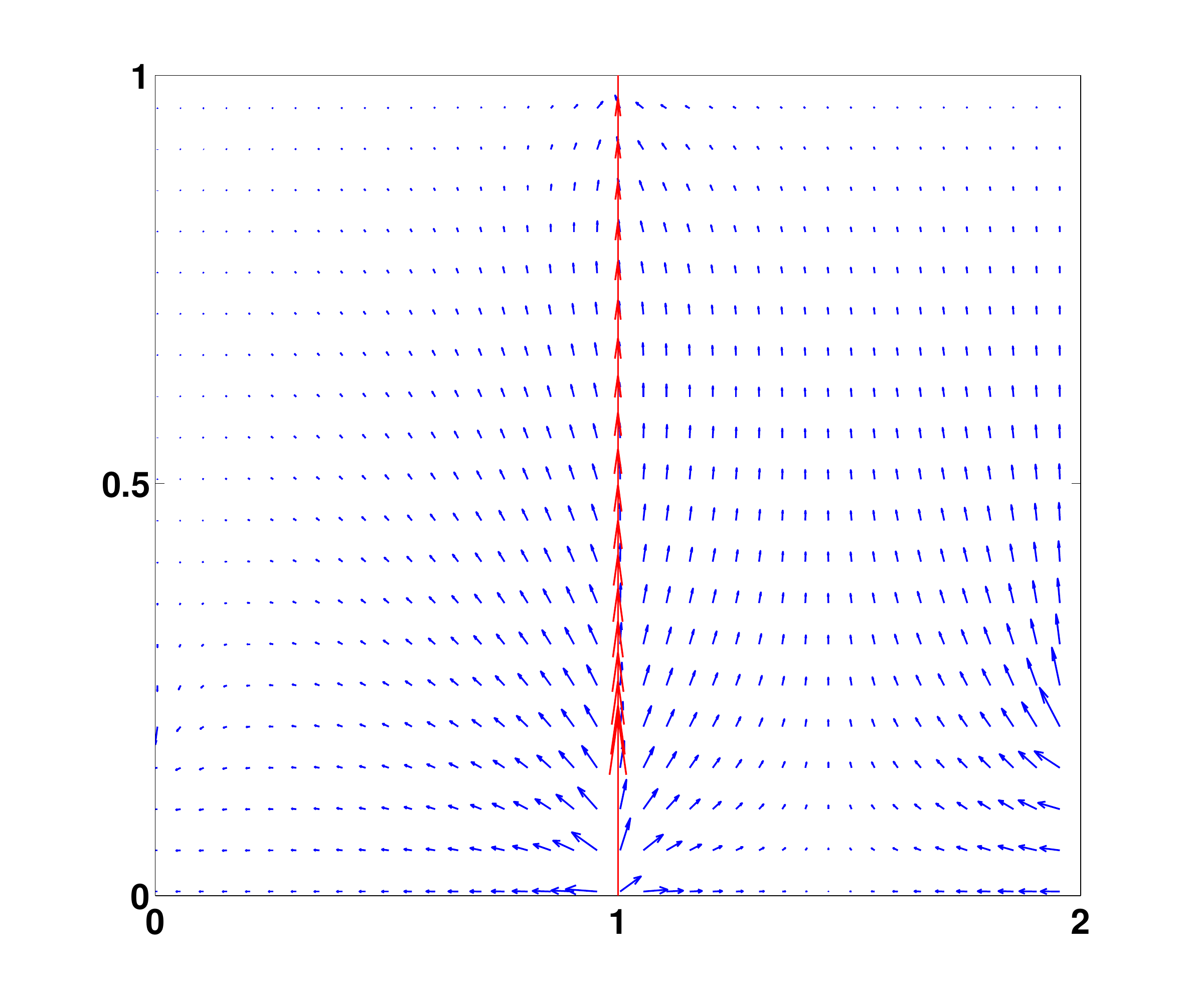}
\end{minipage} \\
\begin{minipage}[c]{0.45 \linewidth}
\includegraphics[scale=0.25]{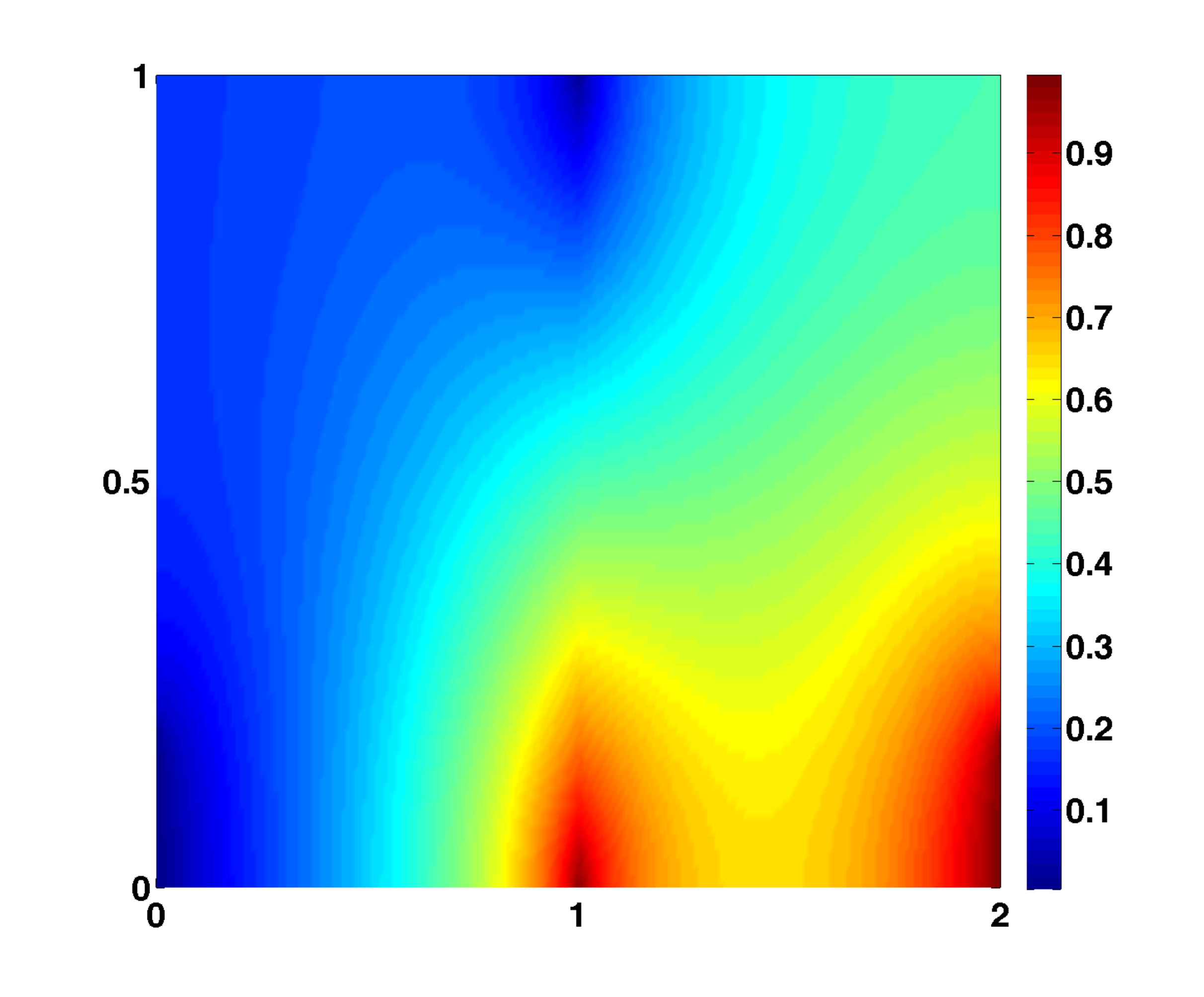}
\end{minipage} \hspace{5pt}
\begin{minipage}[c]{0.45 \linewidth}
\includegraphics[scale=0.25]{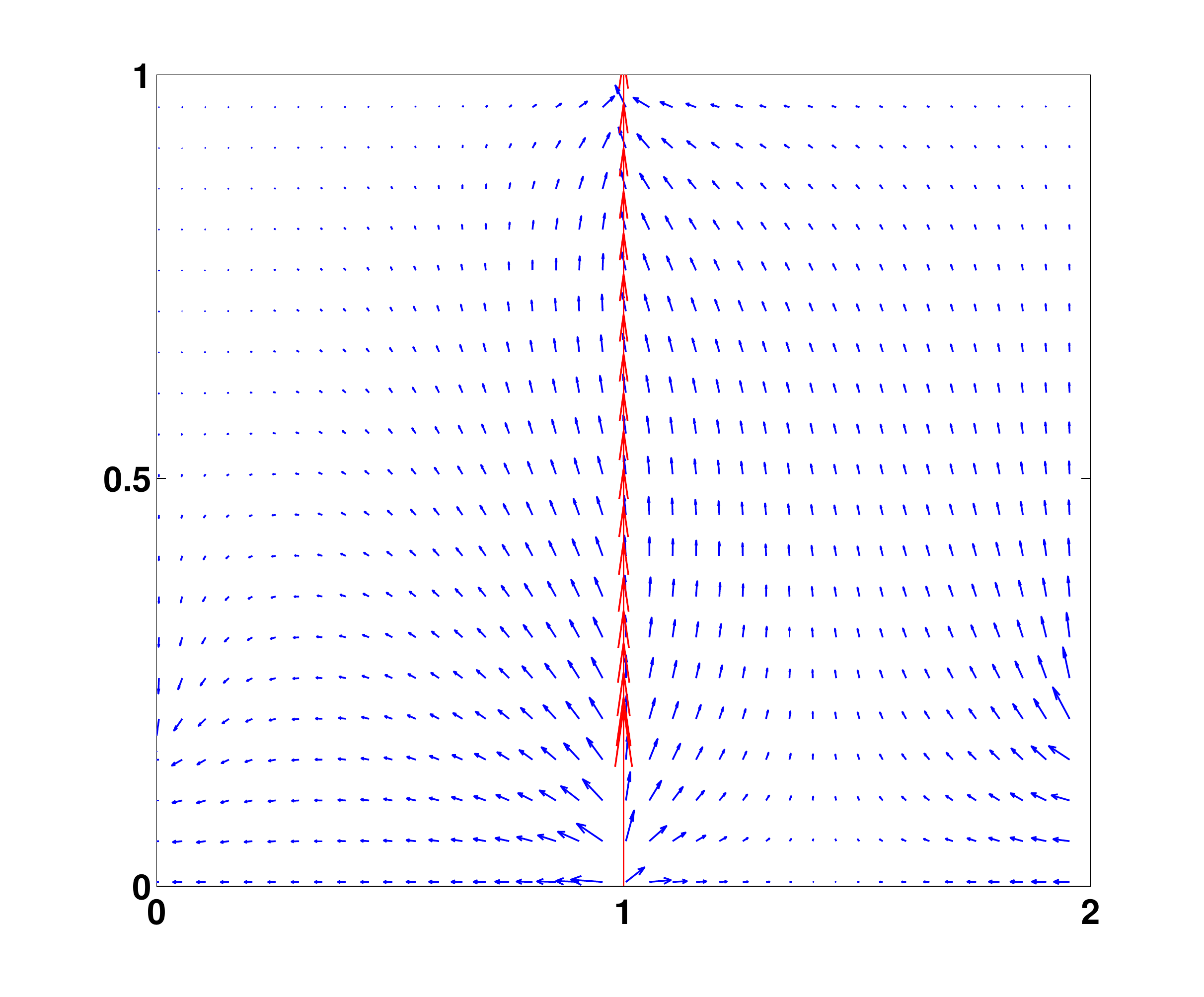}
\end{minipage}
\vspace{-0.2cm}
\caption{Snapshots of the pressure field (left) and flow field (right) at $ t=T/300 $, $ t= T/4 $, $ t=T/2 $ and $ t=T $ respectively (from top to bottom).} 	
\label{A3Fig:CompressSolution} \vspace{-0.2cm}
\end{figure}	
the magnitude of the velocity it represents and the red arrows represent the flow in the fracture. We see that the flow field is a combination of flow in the fracture and flow going from right to left in the rest of the porous medium and there is interaction between them as some fluid flows out of the fracture (near the bottom) and some flows into it
(near the top at later times). Since $ \mathfrak{K}_{f} \gg \mathfrak{K}_{i}, i=1,2 $, the velocity is much larger in the fracture than in the surrounding medium.

%
%
Next, in order to analyze the convergence behavior of both methods, we consider the problem with homogeneous Dirichlet boundary conditions (i.e., the solution converges to zero). We start with a random initial guess on the space-time interface-fracture and use GMRES as an iterative solver and compute the error in the $ L^{2} (0,T; L^{2}(\Omega)) $-norm for the pressure $ p $ and for the velocity~$ \bu $. We stop the iteration when the relative error is less than $ 10^{-6} $. We consider four algorithms: the GTP-Schur method with no preconditioner, the GTP-Schur method with the local preconditioner, the GTP-Schur method with the Neumann-Neumann preconditioner and the GTO-Schwarz method with the optimized Robin parameter. We compare the convergence behavior of these four algorithms in terms of the number of iterations.
Note however that for the GTP-Schur method with the Neumann-Neumann preconditioner the cost per iteration is roughly
twice as large as that of the other methods.

In Figure~\ref{A3Fig:CompresConv}, the error curves versus the number of iterations are shown: the error in $ p $ (on the left) and in $ \bu $ (on the right). We see that the GTP-Schur method with no preconditioner (the blue curves) converges extremely slowly (after $ 500 $ iterations, the error, both in $ p $ and in $ \bu $, is about $ 10^{-1} $). The performance of the GTP-Schur method with the local preconditioner (the green curves) is better but still quite slow: it requires about $ 350 $ iterations to reach an error reduction of $ 10^{-6} $. The Neumann-Neumann preconditioner (the cyan curves) further improves the convergence rate about $ 150 $ iterations are needed to obtain a similar error reduction. The GTO-Schwarz method needs only $ 6 $ iterations to reduce the error to $ 10^{-6} $ and thus the convergence of the GTO-Schwarz method is much faster than that of the other algorithms (at least by a factor of $ 25 $). This is due to the use of the optimized parameter $ \alpha $. In Figure~\ref{A3Fig:CompresRobin}, we show the error in $ \bu $ (in logarithmic scale) after $ 10 $ Jacobi iterations for various values of $ \alpha $. We see that the optimized Robin parameter (the red star) is located close to those giving the smallest error after the same number of iterations. Also we observe that the convergence can be significantly slower if $ \alpha $ is not chosen well. 
\begin{figure}[htbp]
\centering
\begin{minipage}{0.45 \linewidth}
\includegraphics[scale=0.26]{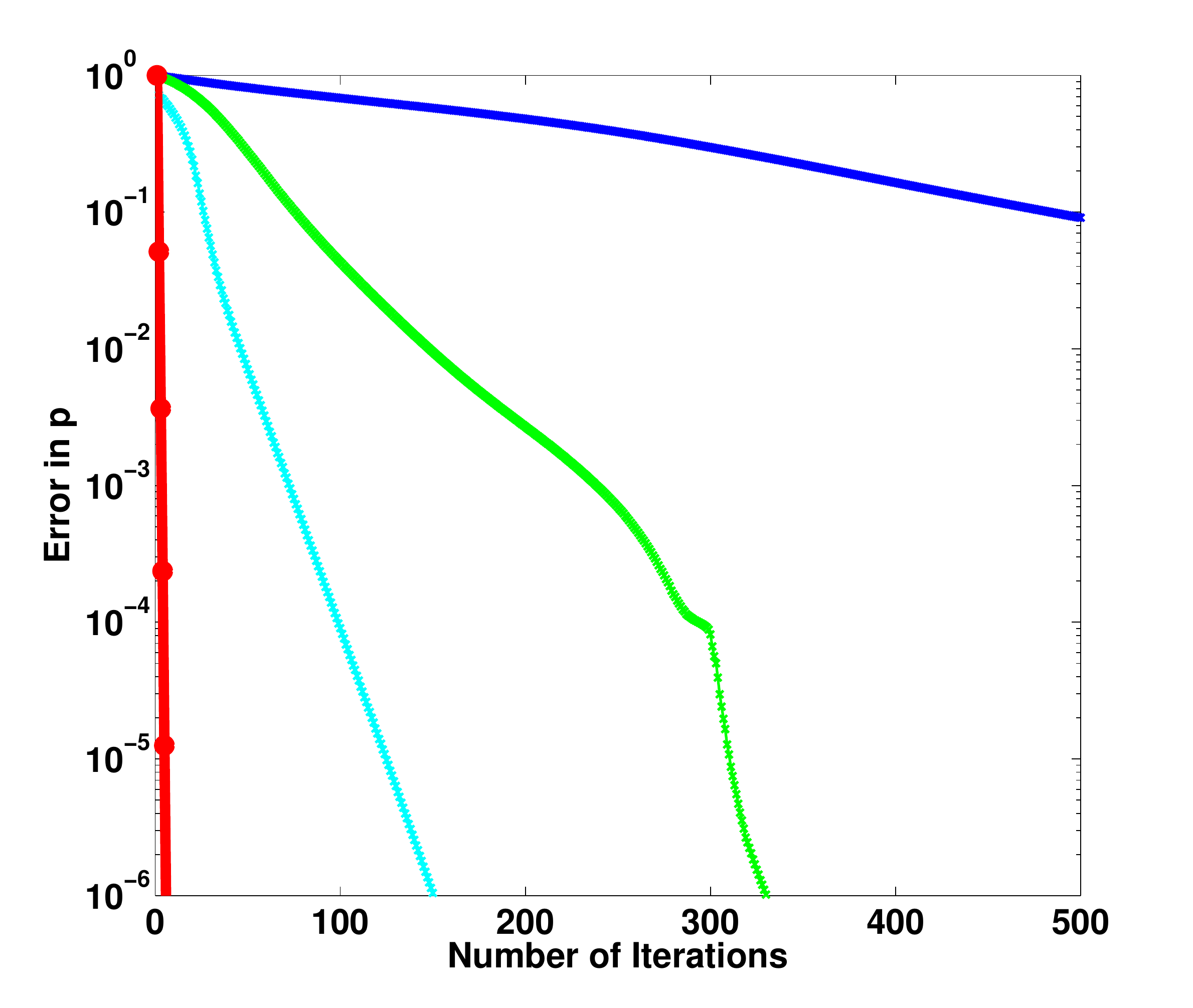}
\end{minipage} \hspace{10pt}
\begin{minipage}{0.45 \linewidth}
\includegraphics[scale=0.26]{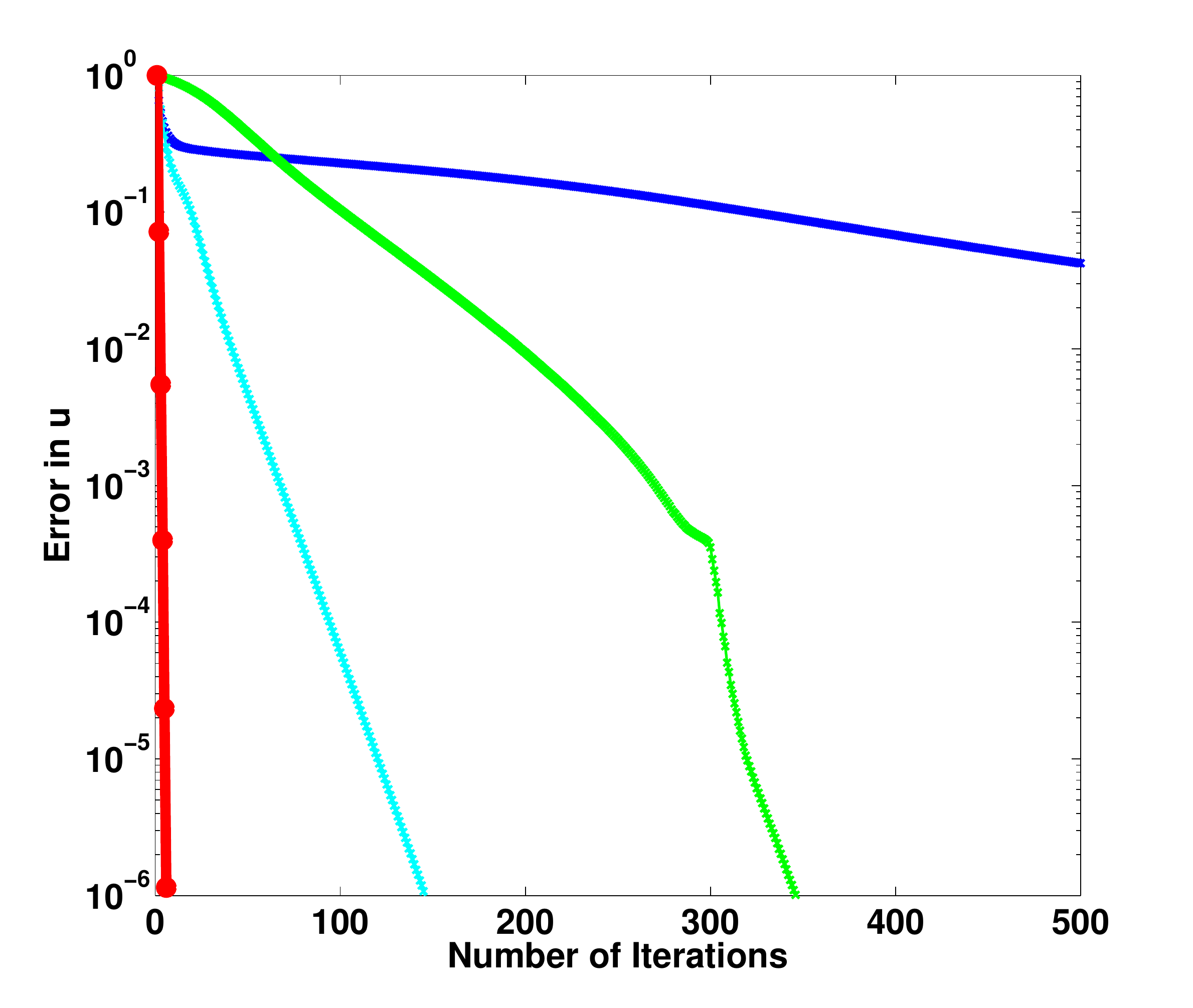} 
\end{minipage} 
\caption{Convergence curves for the compressible flow: errors in $ p $ (on the left) and in $ \bu $ (on the right) -
  GTP-Schur method with no preconditioner (blue), with local preconditioner (green) and
  with Neumann-Neumann preconditioner (cyan) and GTO-Schwarz method (red).} 	
\label{A3Fig:CompresConv} 
\end{figure} 
%
%
%
\begin{figure}[htbp]
\centering
\begin{minipage}{0.45 \linewidth}
\includegraphics[scale=0.27]{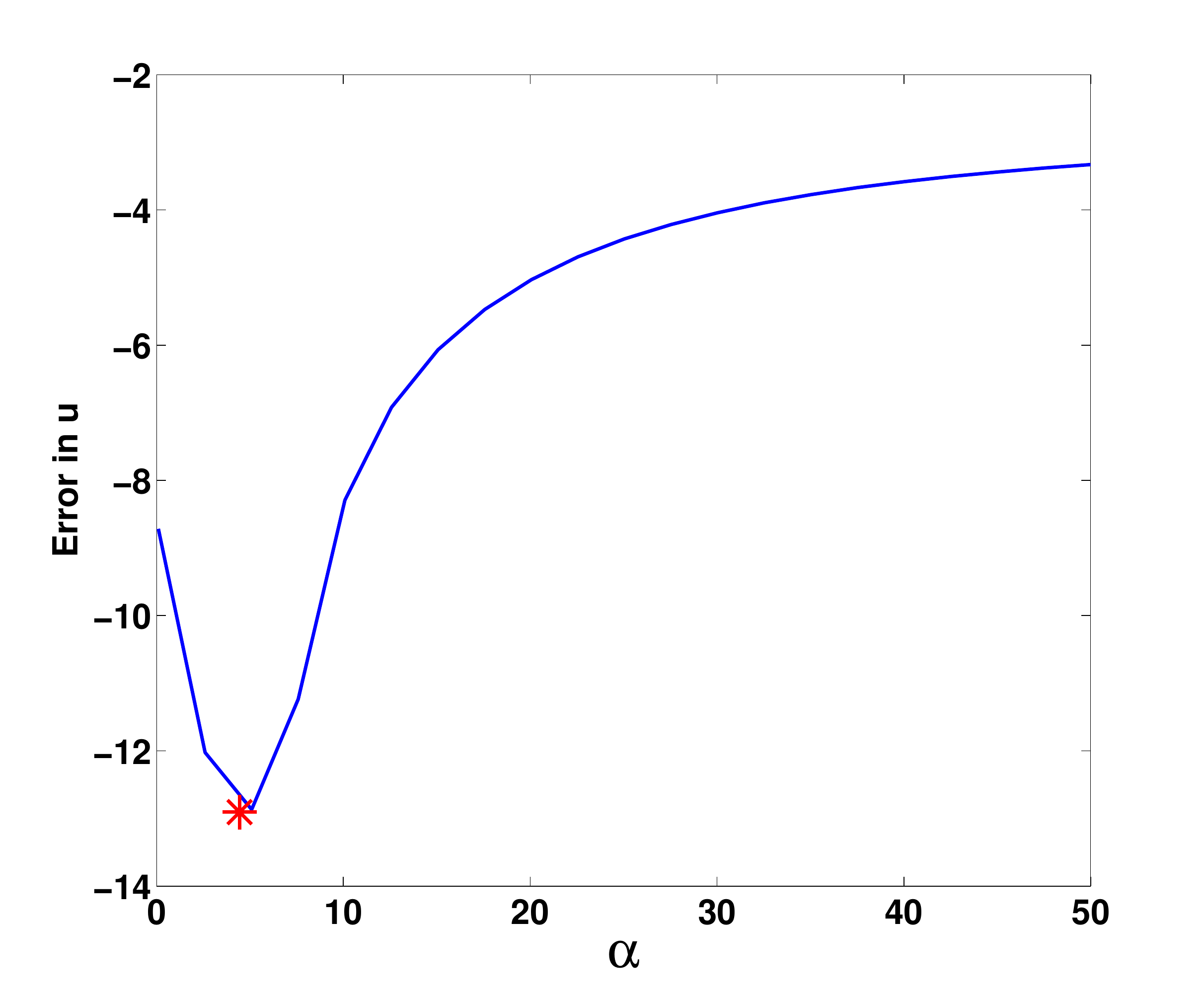}
\end{minipage} 
\caption{$ L^{2} $ velocity error (in logarithmic scale) after $ 10 $ Jacobi iterations for various values
  of the Robin parameter. The red star shows the optimized parameters computed
  by numerically minimizing the continuous convergence factor.} 	
\label{A3Fig:CompresRobin} 
\end{figure}

%
%

Next, we study the behavior of three of the algorithms when nonconforming time grids are used. For this we again use
the nonhomogeneous boundary conditions depicted in Figure~\ref{A3Fig:testGeo}. In all cases, we consider equal time steps for the subdomains as they have the same permeability: $ \Delta t_{1} = \Delta t_{2} = \Delta t_{m} $. We examine three time grids as follows:
\begin{itemize} \itemsep0pt
	\item Time grid 1 (conforming coarse): $ \Delta t_{m} = \Delta t_{f} = T/100 $.
	\item Time grid 2 (nonconforming):  $ \Delta t_{m} = T/100 $ and
              $ \Delta t_{f} =T/500 $.
	\item Time grid 3 (conforming fine): $ \Delta t_{m} = \Delta t_{f} = T/500 $.
\end{itemize}
We start with a zero initial guess on the space-time interface and stop the GMRES iterations when the relative residual is less than $ 10^{-6} $. In Figure~\ref{A3Fig:CompresTimeRelres} we show the relative residual versus the number of iterations for three schemes: the GTP-Schur method with the local preconditioner, the GTP-Schur method with the Neumann-Neumann preconditioner and the GTO-Schwarz method with an optimized Robin parameter. We see that the GTO-Schwarz method still performs better than the GTP-Schur method, and the GTP-Schur method with the Neumann-Neumann preconditioner still converges faster than with the local preconditioner. The convergence rate of both the GTP-Schur method with the Neumann-Neumann preconditioner and the GTO-Schwarz method are almost independent of the time grid (the number of iterations does not change with the time grid) while that of the local preconditioner significantly depends on the temporal grid in the
fracture. We also notice that the behavior of all three methods in the cases of nonconforming and conforming fine grids are very similar. 
\begin{figure}[htbp]
\hspace{-0.3cm}\begin{minipage}[c]{0.30 \linewidth}
\begin{center}
\includegraphics[scale=0.22]{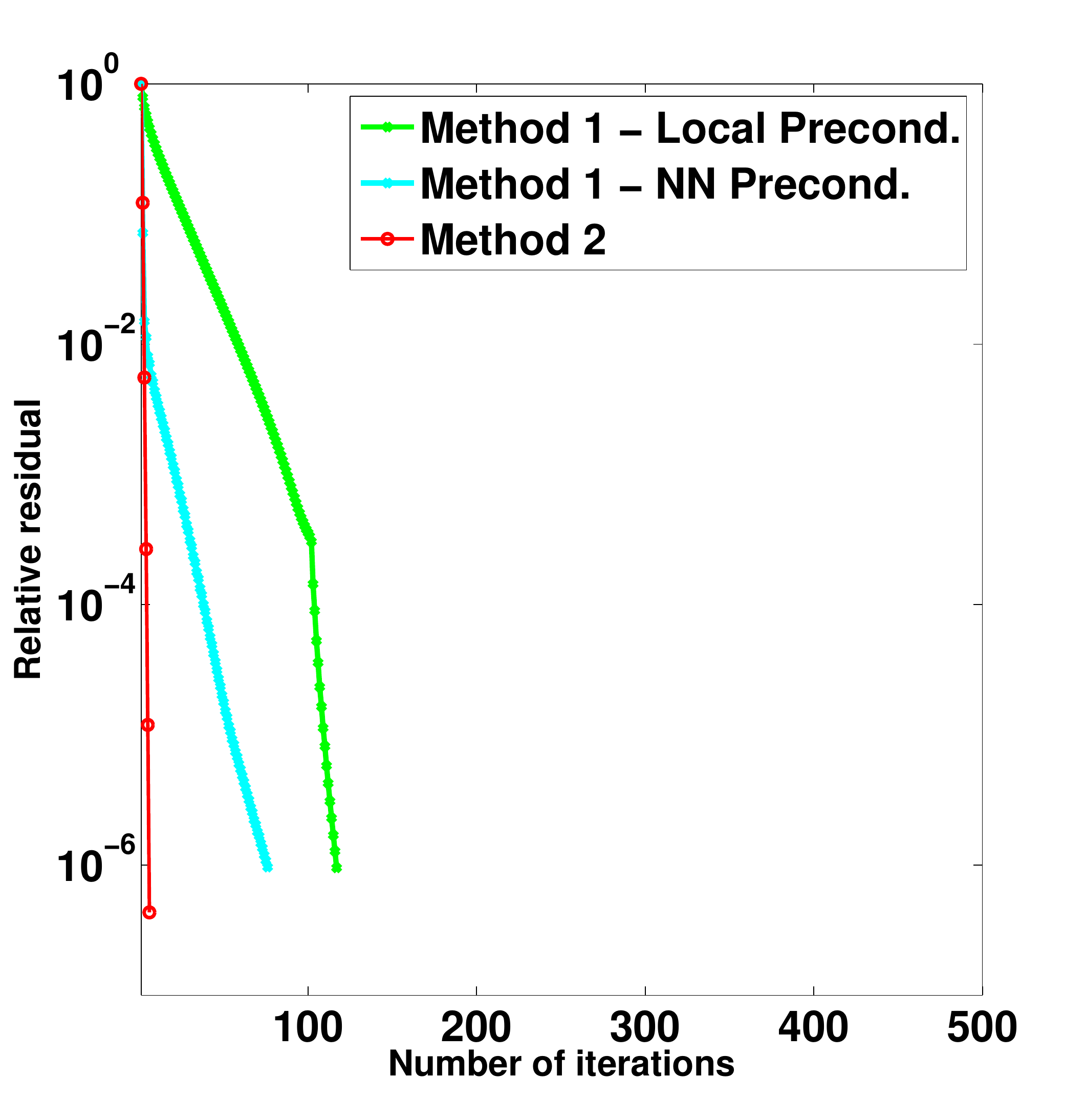}
\end{center}
\end{minipage} \hspace{7pt}
\begin{minipage}[c]{0.30 \linewidth}
\begin{center}
\includegraphics[scale=0.22]{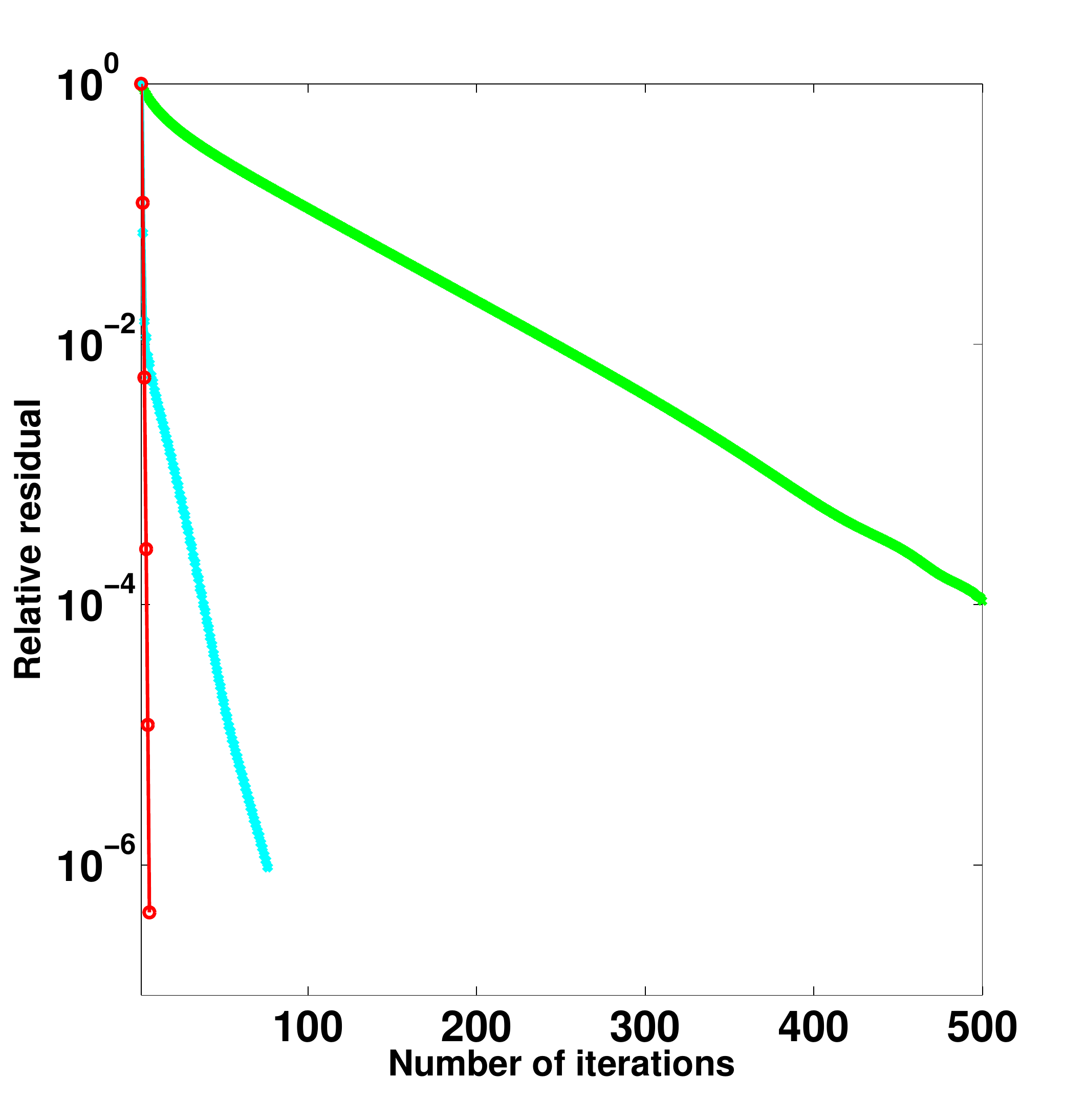}
\end{center}
\end{minipage} \hspace{7pt}
\begin{minipage}[c]{0.30 \linewidth}
\begin{center}
\includegraphics[scale=0.22]{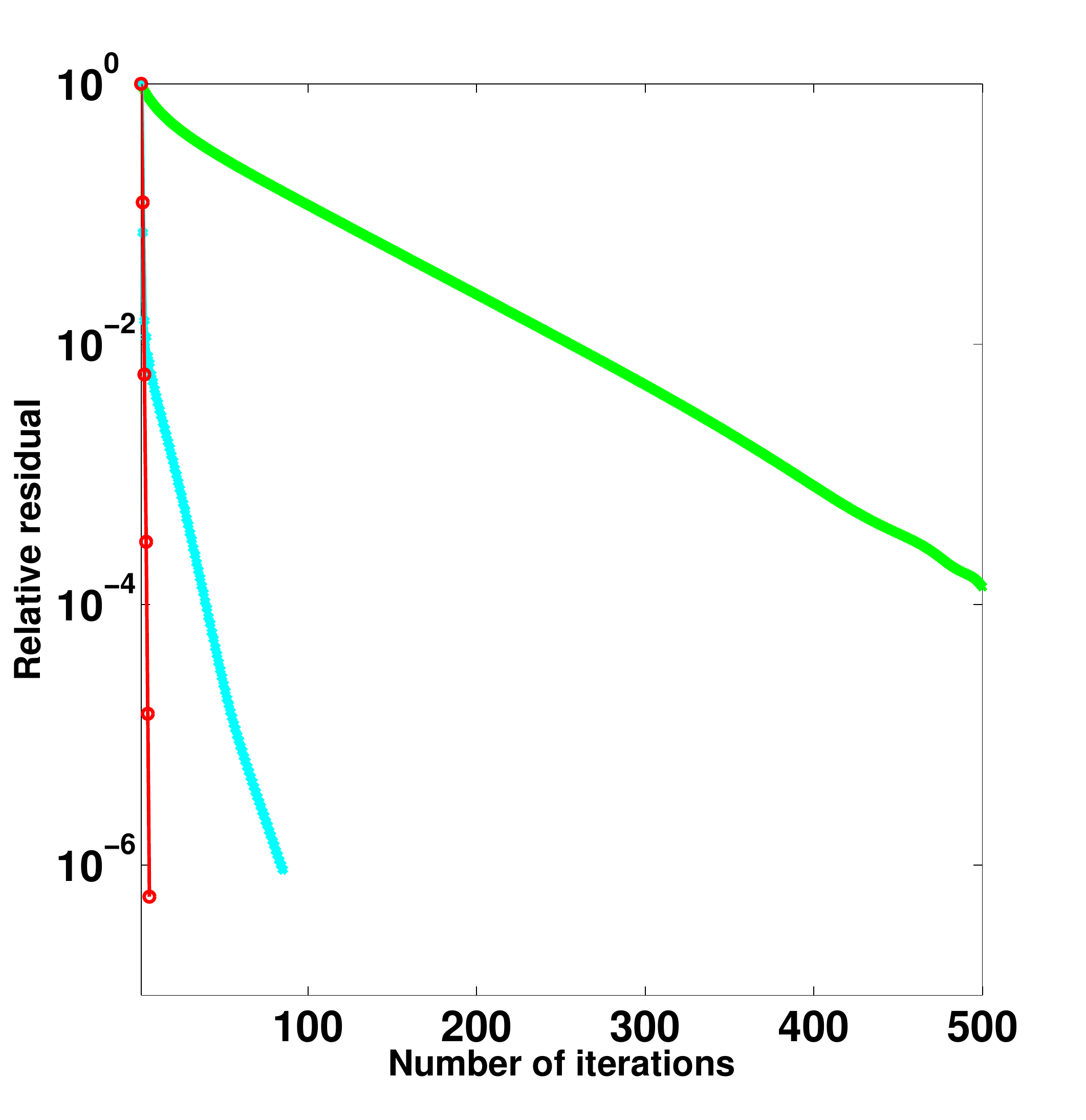}
\end{center}
\end{minipage} \\
\begin{minipage}[c]{1 \linewidth}
\hspace{1cm} Time grid 1
\hspace{2.5cm} Time grid 2
\hspace{2.5cm} Time grid 3
\end{minipage}
\vspace{-0.2cm}
\caption{Relative residual with GMRES for different time grids: GTP-Schur method with the local preconditioner (green), GTP-Schur method with the Neumann-Neumann preconditioner (cyan) and GTO-Schwarz method (red).} 	
	\label{A3Fig:CompresTimeRelres} 
\end{figure}

Now we analyze the error (in time) of the three algorithms for each of the three time grids. A reference solution is obtained by solving problem \eqref{A31dprobSub} - \eqref{A31dprobFrac} directly on a very fine time grid $ \Delta t = T/2000 $. The $ L^{2}-L^{2} $ error of the difference between the multi-domain and the reference solutions at each iteration is computed. We distinguish two different errors: error in the rock matrix $ L^{2}(0,T; L^{2}(\Omega_{i})), i=1,2, $ and error in the fracture $ L^{2}(0,T; L^{2}(\gamma)) $. Figures~\ref{A3Fig:CompresTimeErrorP} and \ref{A3Fig:CompresTimeErrorPfrac} show the pressure error in the subdomains and in the fracture respectively.

We first observe that the error in the subdomains after convergence (Figure~\ref{A3Fig:CompresTimeErrorP}) in the nonconforming case (Time grid~2) is equal to that in the conforming coarse case (Time grid~1) for all three algorithms. This is as expected as we use the same time step $ \Delta t_{m} = T/100 $ in the matrix for both of these grids. However, as already pointed out in Remark~\ref{A3rmrkOSWRaccuracy}, though one might hope that the error in the fracture (Figure~\ref{A3Fig:CompresTimeErrorPfrac}) in the nonconforming case is close to that in the conforming fine grid case (Time grid~3), this can only be the case for GTP-Schur method with the local preconditioner. Only for this case do we actually solve the fracture problem on the fine grid. For the other algorithms, the fracture error of the nonconforming case is equal to that of the conforming coarse grid instead (see Remark~\ref{A3rmrkNNprecond}). However, none of the methods deteriorates the accuracy because of nonconforming time grids. 
\begin{figure}[htbp]
\hspace{-0.3cm}\begin{minipage}[c]{0.3 \linewidth}
\centering
\includegraphics[scale=0.22]{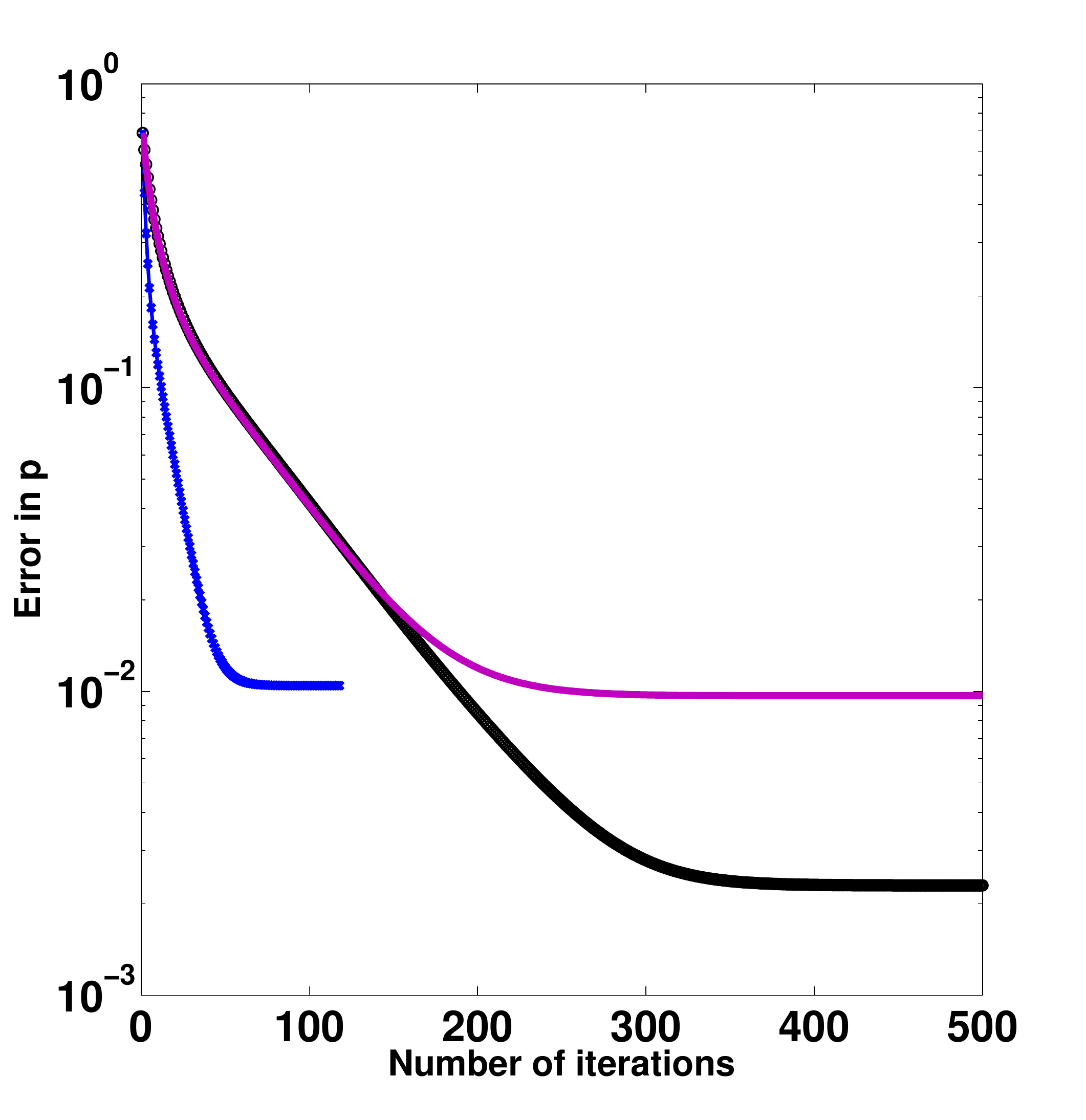}
\end{minipage} \hspace{10pt}
\begin{minipage}[c]{0.30 \linewidth}
\begin{center}
\includegraphics[scale=0.22]{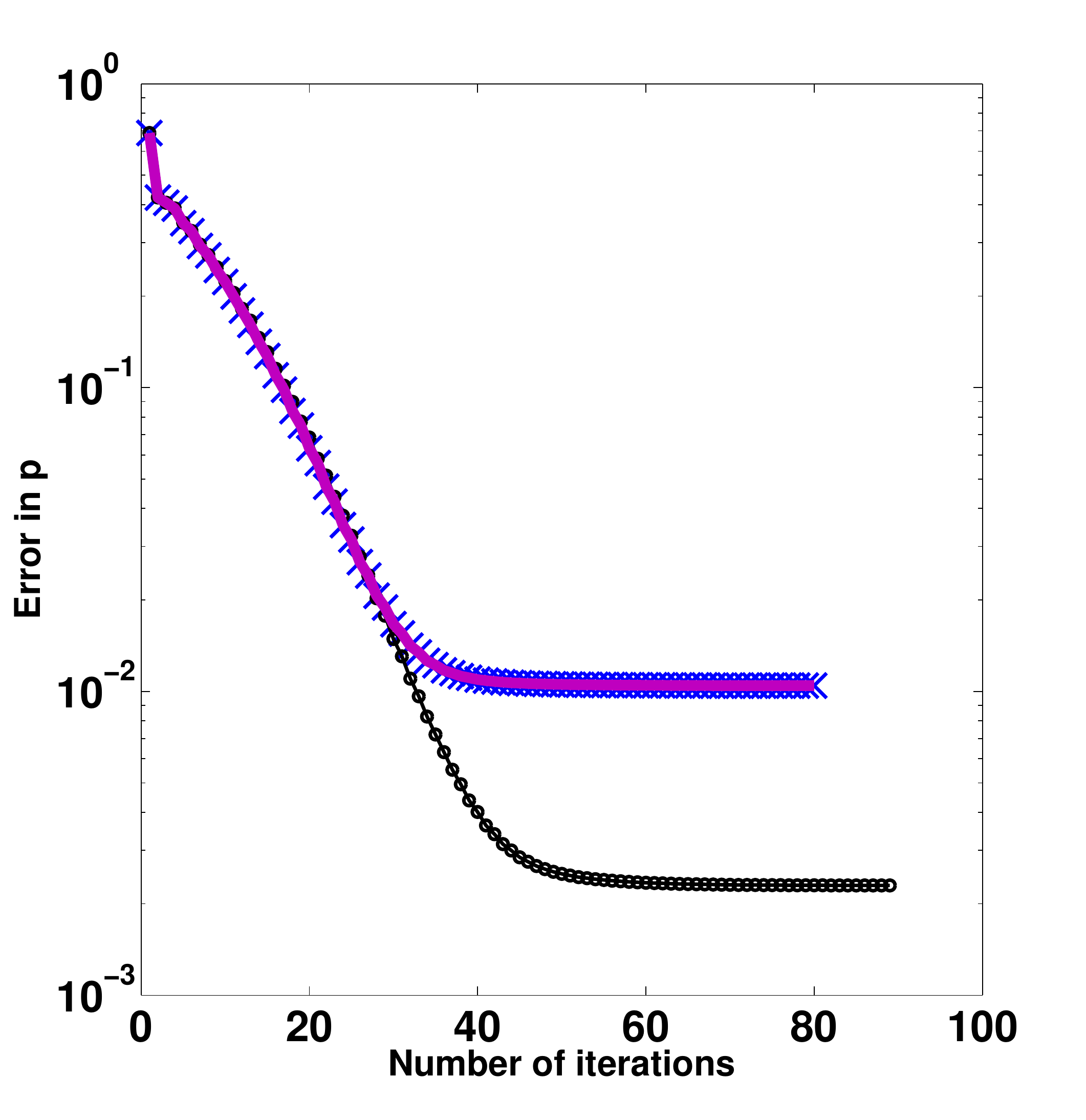}
\end{center}
\end{minipage} \hspace{10pt}
\begin{minipage}[c]{0.30 \linewidth}
\begin{center}
\includegraphics[scale=0.22]{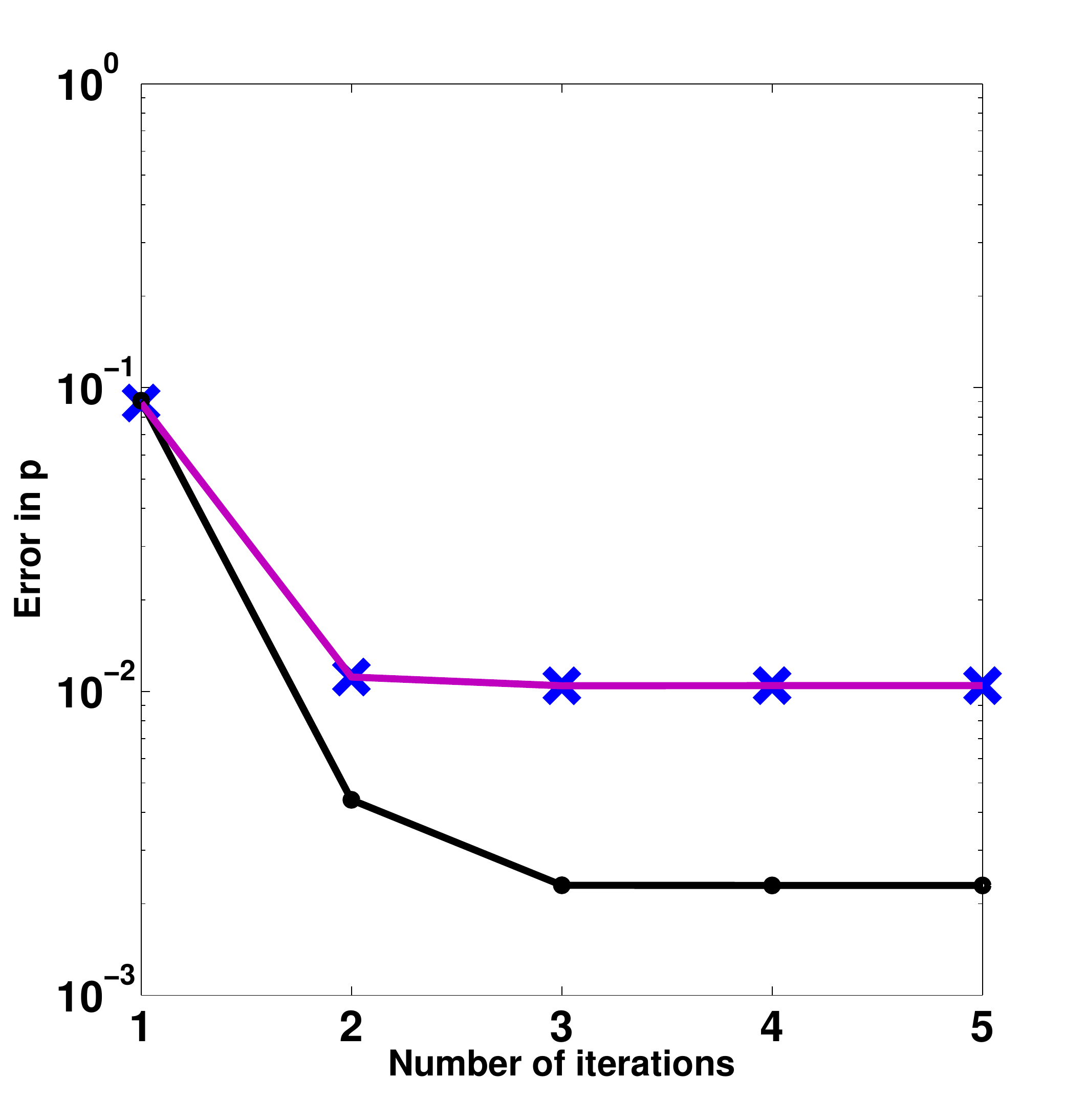}
\end{center}
\end{minipage} \\
\begin{minipage}[c]{1 \linewidth}
\hspace{0.3cm}  {\footnotesize GTP-Schur - local precond.}
\hspace{0.62cm} {\footnotesize GTP-Schur  - NN precond.}
\hspace{1.65cm} {\footnotesize GTO-Schwarz }
\end{minipage}
\vspace{-0.2cm}
\caption{$ L^{2} $ pressure error in the rock matrix: Time grid 1 (blue), Time grid 2 (magenta), Time grid 3 (black).} 	
	\label{A3Fig:CompresTimeErrorP} 
\end{figure}
%
%
%
\begin{figure}[htbp]
\hspace{-0.3cm}\begin{minipage}[c]{0.30 \linewidth}
\begin{center}
\includegraphics[scale=0.22]{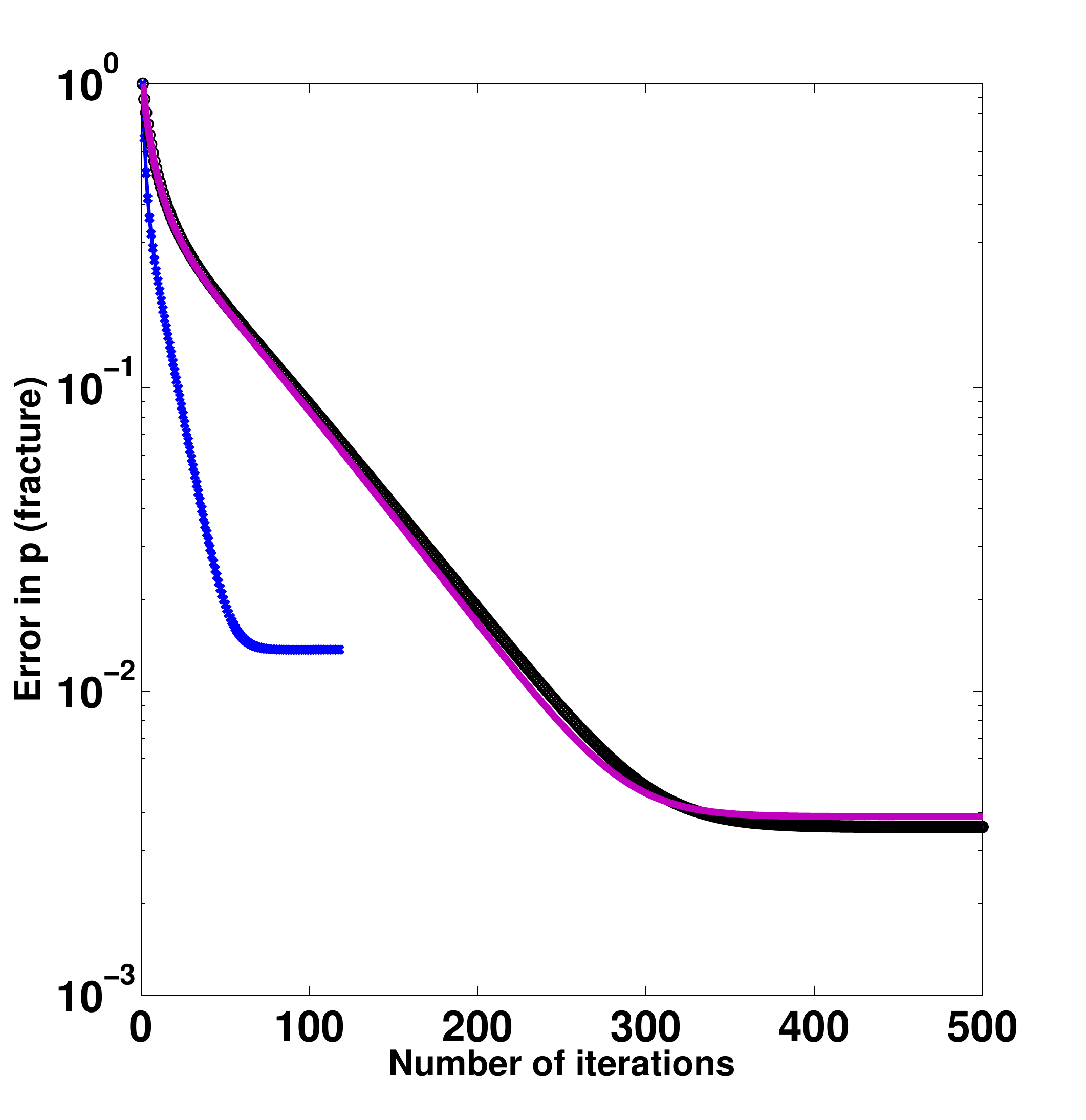}
\end{center}
\end{minipage} \hspace{10pt}
\begin{minipage}[c]{0.30 \linewidth}
\begin{center}
\includegraphics[scale=0.22]{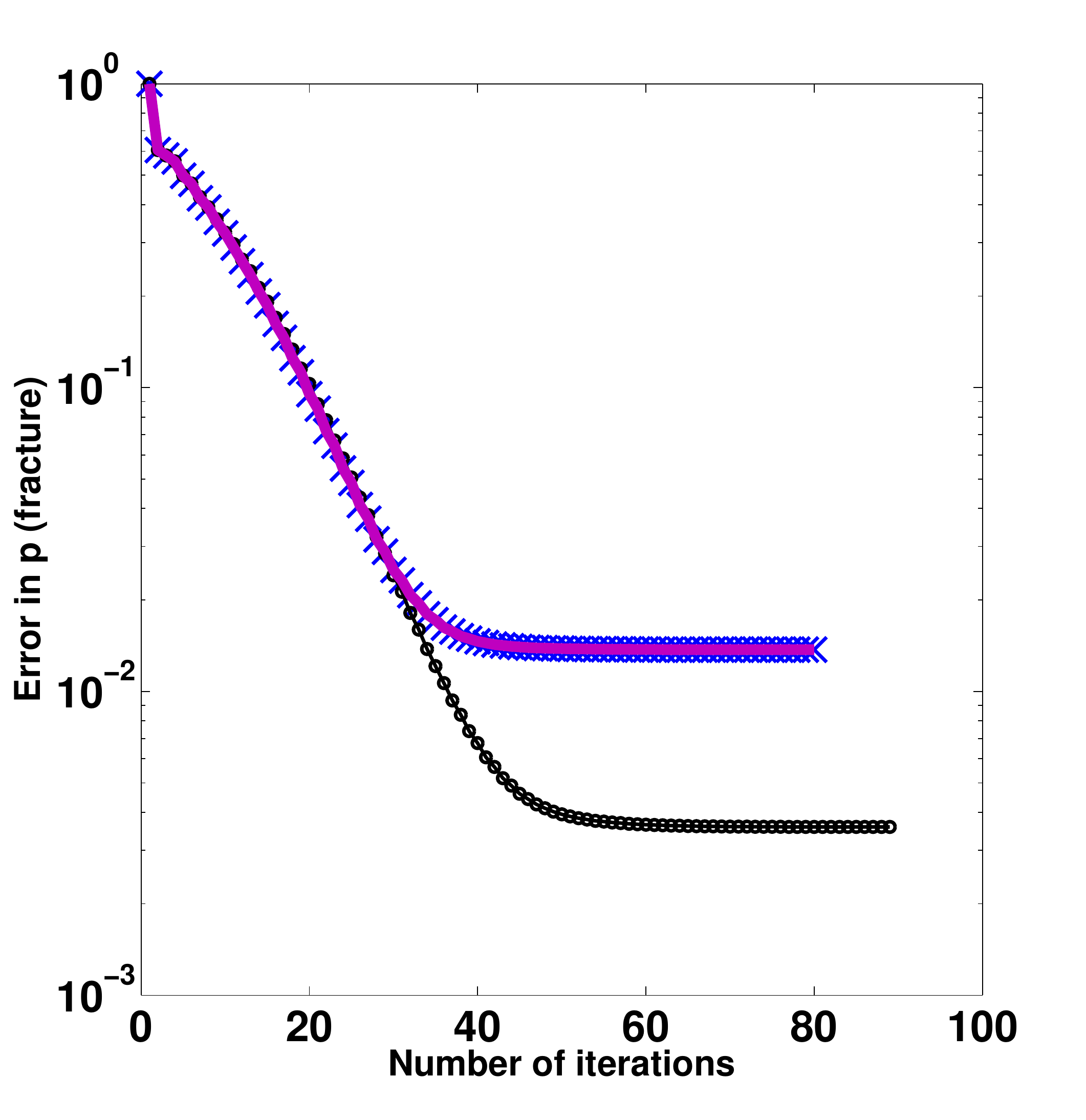}
\end{center}
\end{minipage} \hspace{10pt}
\begin{minipage}[c]{0.30 \linewidth}
\begin{center}
\includegraphics[scale=0.22]{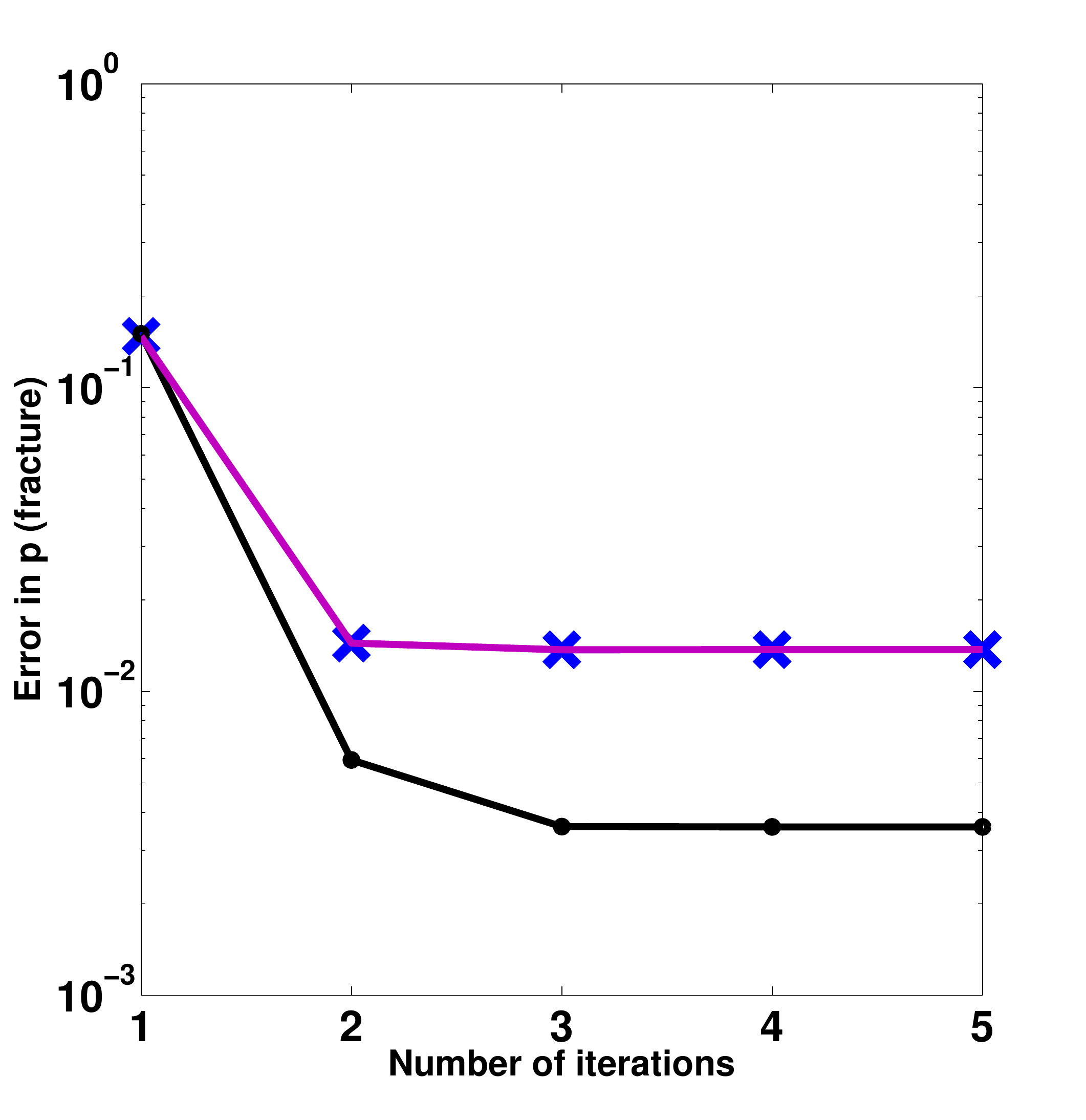}
\end{center}
\end{minipage} \\
\begin{minipage}[c]{1 \linewidth}
\hspace{0.3cm}  {\footnotesize GTP-Schur - local precond.}
\hspace{0.62cm} {\footnotesize GTP-Schur  - NN precond.}
\hspace{1.65cm} {\footnotesize GTO-Schwarz }
\end{minipage}
\vspace{-0.2cm}
\caption{$ L^{2} $ pressure error in the fracture: Time grid 1 (blue), Time grid 2 (magenta), Time grid 3 (black).} 	
	\label{A3Fig:CompresTimeErrorPfrac} 
\end{figure}
\begin{remark} \label{A3rmrkFracDD}
While the GTO-Schwarz method does not make it particularly useful to use a finer time grid in the fracture, it does give a rather remarkable convergence speed.  For the advection-diffusion problem with an explicit time scheme for advection, one of the main advantages of using smaller time steps in the fracture is to avoid imposing a time step in the two subdomains dictated by the CFL number of the equation in the fracture.  Thus we are hopeful that this algorithm will be useful when coupled with the advection equation simply for the convergence speed that it gives.  We add however that we are still pursuing some ideas for modifying this scheme to obtain an algorithm that can take advantage of smaller time steps in the fracture for the diffusion equation.  
\end{remark}

\section*{Conclusion}
We consider two domain decomposition methods for modeling the compressible flow in fractured porous media in which the fractures are assumed to be much more permeable than the surrounding medium. Two space-time interface problems are formulated using the time-dependent Dirichlet-to-Neumann and the Ventcell-to-Robin operators respectively, so that different time discretizations in the subdomains and in the fracture can be adapted. For the GTP-Schur method, two different preconditioners - the local and the Neumann-Neumann preconditioners- are considered and are first validated for a simple test case with one fracture. For the GTO-Schwarz method, the optimized parameter is used to accelerate the convergence of the associated iterative algorithm. Preliminary numerical experiments show that the GTO-Schwarz method converges much faster than the GTP-Schur method (with either the preconditioner) in terms of the number of iterations. The Neumann-Neumann preconditioner works better than the local preconditioner in the sense that its convergence is faster and is only weakly dependent on the mesh size of the discretizations. The GTO-Schwarz method also has a weak dependence on the mesh size. When nonconforming time steps are used, only the local preconditioner preserves the accuracy in time: the $ L^{2} $ error in the fracture of the nonconforming time grid is close to that of the conforming fine grid. For the other algorithms, the $ L^{2} $ error in the fracture of the nonconforming time grid is close to that of the conforming coarse grid instead.
However, for the GTO-Schwarz method, this weak point when different time steps are used is compensated by the fast
convergence of the algorithm.

\appendix 

\section{Proof of Theorem~\ref{thm:abst}}
\label{sec:append}

We now give the proof of Theorem~\ref{thm:abst}. 
The proof of the theorem is based on the Galerkin method, and its main
steps will be given after the following lemma, which states the main
energy estimates.
\begin{remark}
The proof of Lemma~\ref{lem:apriori}
is given in the infinite dimensional setting but some technical points (those involving
$\pmb{u}$ at time $t=0$) can only be defined using their finite
dimensional Galerkin approximations (as was done in detail for
Dirichlet and Robin boundary conditions in~\cite{PhuongThesis}). The
results presented below have to be understood in that sense.  
\end{remark}
\begin{lemma}
\label{lem:apriori}
  Under assumptions~\eqref{eq:H2}, \eqref{eq:H3} and \eqref{eq:H5} above, the following
  a priori estimates hold.
  \begin{equation}
    \label{eq:estim}
    \begin{aligned}
  \| p \|_{L^\infty(0, T; M)}^2 & \leq C \left( \| L \|^2_{L^2(0,T; M)²} +
      \|p_0\|_M^2 \right), \\ 
    \| \bu \|_{L^2(0, T; \Sigma_a)}^2 & \leq C \left( \| L \|^2_{L^2(0,T; M)²} +
      \|p_0\|_M^2 \right), \\
  \| \partial_t p \|_{L^2(0, T; M)}^2 & \leq C \left( \| L \|^2_{L^2(0,T; M)²} +
      \|p_0\|_W^2 \right), \\
   \|B\bu\|_{L^2(0, T; M)}^2 
   & \leq C
    \left( \| L \|^2_{L^2(0,T; M)²} +  \|p_0\|_W^2 \right),
    \end{aligned}
  \end{equation}
where we recall that $ \Sigma_a$ denotes the space $ \Sigma$ with the norm induced by the bilinear form $a$.
\end{lemma}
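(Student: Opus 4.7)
The plan is to derive the four a priori estimates by testing the weak formulation~\eqref{App31dWeak} with carefully chosen test functions, working at the level of the finite-dimensional Galerkin approximation (as flagged in the remark preceding the lemma) where everything is smooth enough to differentiate in time and to evaluate $\bu$ at $t=0$.

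For the first two estimates, I would test the first equation of~\eqref{App31dWeak} with $\bv = \bu(t)$ and the second equation with $\mu = p(t)$, then add. The term $b(\bu,p)$ cancels, $c(p,p) \ge 0$ by~\eqref{eq:H3}, and $(\partial_{t} p, p)_M = \tfrac{1}{2}\frac{d}{dt}\|p\|_M^2$, so that
\begin{equation*}
  \tfrac{1}{2}\frac{d}{dt}\|p\|_M^2 + a(\bu,\bu) \le L(p) \le \tfrac{1}{2}\|L\|_M^2 + \tfrac{1}{2}\|p\|_M^2.
\end{equation*}
Integrating in time and applying Gronwall's lemma yields the first estimate; integrating the $a(\bu,\bu)$ term (which controls $\|\bu\|_{\Sigma_a}^2$ by the definition of $\Sigma_a$) yields the second. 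Only $\|p_0\|_M$ is needed here, matching the statement.

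For estimates 3 and 4, I would differentiate the first equation of~\eqref{App31dWeak} in time (legitimate in the Galerkin setting) to get $a(\partial_t \bu, \bv) = b(\bv, \partial_t p)$, set $\bv = \bu$ there, and set $\mu = \partial_t p$ in the second equation; adding, the $b$-terms cancel and I obtain
\begin{equation*}
  \|\partial_t p\|_M^2 + \tfrac{1}{2}\frac{d}{dt}\bigl(a(\bu,\bu) + c(p,p)\bigr) = L(\partial_t p)
  \le \tfrac{1}{2}\|L\|_M^2 + \tfrac{1}{2}\|\partial_t p\|_M^2.
\end{equation*}
Integrating in time delivers a bound on $\|\partial_t p\|_{L^2(0,T;M)}^2$ in terms of $\|L\|_{L^2(0,T;M)}^2$, $c(p_0,p_0)$ and $a(\bu(0),\bu(0))$. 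The crux is the last term: evaluating the first equation of~\eqref{App31dWeak} at $t=0$ with $\bv = \bu(0)$ gives $a(\bu(0),\bu(0)) = b(\bu(0),p_0)$, and this is exactly where hypothesis~\eqref{eq:H5} enters, since $p_0 \in W$:
\begin{equation*}
  a(\bu(0),\bu(0)) = b(\bu(0),p_0) \le C_b \|\bu(0)\|_{\Sigma_a} \|p_0\|_W,
\end{equation*}
so $\|\bu(0)\|_{\Sigma_a}^2 \le C_b^2 \|p_0\|_W^2$. This is why estimate~3 requires the stronger norm $\|p_0\|_W$. Finally, for estimate~4, the second equation of~\eqref{App31dWeak} and the identification $b(\bu,\mu)=(B\bu,\mu)_M$ give, by duality in $M$,
\begin{equation*}
  \|B\bu\|_M \le \|L\|_M + \|\partial_t p\|_M + C_c\|p\|_M,
\end{equation*}
with $C_c$ the continuity constant of $c$; squaring, integrating, and using the first three estimates closes the argument.

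The main obstacle I foresee is purely technical: justifying the time-differentiation step $a(\partial_t \bu,\bu) = \tfrac{1}{2}\frac{d}{dt}a(\bu,\bu)$ and the pointwise evaluation of $\bu$ at $t=0$, since the weak formulation alone only yields $\bu \in L^2(0,T;\Sigma)$ and does not prescribe an initial velocity. As the preceding remark indicates, the clean way is to perform the entire argument on a Galerkin approximation $\bu_n(t)$, where $\bu_n(0)$ is well defined through the discrete analogue of the first equation at $t=0$, to obtain estimates uniform in $n$ thanks to~\eqref{eq:H5}, and then pass to the limit.
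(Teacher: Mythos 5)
Your proposal is correct and follows essentially the same route as the paper's proof: testing with $(\bu,p)$ plus Gronwall for the first two estimates, differentiating the first equation in time and testing with $(\bu,\partial_t p)$ for the third, bounding $a(\bu(0),\bu(0)) = b(\bu(0),p_0)$ via hypothesis~\eqref{eq:H5} exactly as the paper does, and obtaining the $\|B\bu\|_M$ bound from the second equation (your duality phrasing is the same computation as the paper's choice $\mu = B\bu$). Your closing remark on justifying the argument through Galerkin approximations, with $\bu_n(0)$ defined by the discrete first equation at $t=0$, matches the paper's own caveat in the remark preceding the lemma.
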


\begin{proof}
As usual we proceed by estimating successively $ p $,  $ \bu $ and $ \partial _{t} p $. \\

$\bullet$
First, to derive an estimate for $ p $, we take $ p(t) \in M $ and $ \bu(t) \in \Sigma $ as the test functions in \eqref{App31dWeak} and add the two equations to obtain
$$ a (\bu, \bu) + c(p, p)+   (\partial _{t} p, p)_M = L(p).
$$
Using the Cauchy-Schwarz inequality, we see that
\begin{equation} \label{App3lmm1}
\frac{1}{2} \frac{d}{dt} \| p \|_{M}^{2} + c(p, p) +  \| \bu \|_{\Sigma_a}^{2} \leq \frac{1}{2}\left (\| L\|_{M}^{2} + \| p\|_{M}^{2}\right ),
\end{equation}
Now integrating \eqref{App3lmm1} over $ (0,t) $ for $ t \in (0,T] $, we find
$$ \| p(t) \|_{M}^{2} + 2\int_0^t c(p, p) + 2 \int_{0}^{t} \| \bu \|_{\Sigma_a}^{2}  \leq \| p_{0}\|_{M}^{2}+\| L \|_{L^{2}(0,T; M)}^{2} + \int_{0}^{t} \| p\|_{M}^{2}.
$$
Then we use the non-negativity of $c$ and apply Gronwall's lemma to obtain
the first two estimates in \eqref{eq:estim}
$$ \| p \|_{L^{\infty}(0,T; M)}^{2} \leq C \left (\| p_{0}\|_{M}^{2}+\| L \|_{L^{2}(0,T; M)}^{2} \right ),
$$
and 
\begin{equation}\label{App3EstU}
\|\bu \|_{L^{2}(0,T; \Sigma_a)}^{2} \leq C\left (\| p_{0}\|_{M}^{2}+\| L \|_{L^{2}(0,T; M)}^{2} \right ).
\end{equation}
$\bullet$
Next, to estimate $ \partial _{t} p $, we differentiate the first equation of \eqref{App31dWeak} with respect to $ t $ and take $ \bu $ as a test function. This yields
\begin{equation} \label{App3lmm2a}
a(\partial_{t} \bu, \bu) - b(\bu, \partial_{t} p) = 0.
\end{equation}
Then taking $ \partial_{t} p $ as a test function in the second equation of~\eqref{App31dWeak} wee see that
\begin{equation} \label{App3lmm2b}
(\partial_{t} p , \partial_{t} p )_M + c(p, \partial_t p) + b(\bu, \partial_{t} p) = L(\partial_{t} p). 
\end{equation}
Now adding \eqref{App3lmm2a} and \eqref{App3lmm2b}, we obtain
$$ a(\partial_{t} \bu, \bu) + (\partial_{t} p , \partial_{t} p )_M +
c(p, \partial_t p) = L (\partial _{t} p),
$$
or 
\begin{equation}\label{App3lmm2c-1}
\| \partial _{t} p \|_{M}^{2} + \frac{1}{2} \frac{d}{dt} c(p, p) + \frac{1}{2} \frac{d}{dt} \| \bu \|_{\Sigma_a}^{2}  \leq \frac{1}{2}\| L\|^{2}_{M} + \frac{1}{2} \| \partial _{t} p \|_{M}^{2}.
\end{equation}
\medskip
Integrating this inequality over $ (0,t) $ for $ t \in (0,T] $, we have
\begin{equation}\label{App3lmm2c}
 \int_{0}^{t}\| \partial _{t} p \|_M^{2}+ c(p(t), p(t)) +  \| \bu (t)
 \|_{\Sigma_a}^{2} \leq \|L\|^{2}_{L^{2}(0,T; M)} + C_c \| p_0 \|_M^2 +  \| \bu (0) \|_{\Sigma_a}^{2},
\end{equation}
where $C_c$ is the constant of continuity of the bilinear form $c$.
There remains to bound the term $  \| \bu (0) \|_{\Sigma_a}^{2} $. Toward this end, we use the first equation of \eqref{App31dWeak} with $ \bv = \bu $ and for $ t=0 $:
\begin{equation*}
  a(\bu(0), \bu(0)) = b(\bu(0), p_0).
\end{equation*}
The (regularity) assumption that $p_0 \in W$ enables us to write 
\begin{equation*}
  \| \bu(0) \|_{\Sigma_a} \leq C_b \|p_0\|_W,
\end{equation*}
and, as $c(p(t), p(t)) \ge 0$,  this gives the third inequality in~\eqref{eq:estim}\\
%
%

$\bullet$
We now derive the last estimate. For this, we take $\mu = B \bu$ as
test function in the second equation of~\eqref{App31dWeak}.
\begin{equation*}
  (\partial_t p, B \bu)_M + c(p, B \bu) + b(\bu, B \bu) = (L, B \bu)_M
\end{equation*}
which we rewrite as 
\begin{equation*}
  \| B \bu \|_M^2 = (L -\partial_t p, B \bu)_M -c(p, B \bu)
  \leq C \left(
    \|L\|_M^2 + \| \partial_t p \|_M^2  +\|  p \|_M^2\right) +   \dfrac{1}{2} \| B \bu \|_M^2,
\end{equation*}
and the fourth inequality then follows by integrating in time and using the previous inequalities, which
completes the proof of the lemma.
\end{proof}

\medskip
We now give the proof of the theorem.
\begin{proof}
We first prove an estimate for $\| \bu \|_\Sigma$, which follows easily from the second and
fourth inequalities in Lemma~\ref{lem:apriori} and
hypothesis~\eqref{eq:H4p}:
\begin{equation}
\label{eq:estimu}
\beta \| \bu \|_{L^2(0, T; \Sigma)}^2 \leq C\int_0^T \left(  \| \bu
  \|_{\Sigma_a }^2 +
\| B \bu \|_M^2 \right) \leq C \left( \| L \|^2_{L^2(0,T; M)²} +
      \|p_0\|_W^2 \right).
\end{equation}
Note that this is the only place in the proof where~\eqref{eq:H4p} was
used. Lemma~\ref{lem:apriori} is independant of this
hypothesis.

With the a priori estimates from Lemma~\ref{lem:apriori} and~\eqref{eq:estimu},
the proof is concluded by using Galerkin's method.
\end{proof}

\pagebreak

 \bibliographystyle{plain}
\bibliography{Fracturespaper} %

\end{document}